\let\@wraptoccontribs\wraptoccontribs
\newtheorem{thm}{Theorem}[section]
\newtheorem{cor}[thm]{Corollary}
\newtheorem{lem}[thm]{Lemma}
\newtheorem{prop}[thm]{Proposition}
\theoremstyle{remark}
\newcounter{remarkscounter}
\numberwithin{equation}{section}
\newcommand{\A}{\mathbb{A}}
\newcommand{\GL}{\mathrm{GL}}
\newcommand{\SL}{\mathrm{SL}}
\newcommand{\ZZ}{\mathbb{Z}}
\newcommand{\QQ}{\mathbb{Q}}
\newcommand{\lto}{\longrightarrow}
\newcommand{\OO}{\mathcal{O}}
\newcommand{\CC}{\mathbb{C}}
\newcommand{\RR}{\mathbb{R}}
\newcommand{\GG}{\mathbb{G}}
\newcommand{\quash}[1]{}
\theoremstyle{definition}
\renewcommand{\bar}{\overline}
\numberwithin{equation}{section}
\newcommand{\one}{\mathbbm{1}}
\newenvironment{psmatrix}
  {\left(\begin{smallmatrix}}
  {\end{smallmatrix}\right)}
\begin{document}

\title{Summation formulae for quadrics}

\author{Jayce R. Getz}
\address{Department of Mathematics\\
Duke University\\
Durham, NC 27708}
\email{jgetz@math.duke.edu}

\subjclass[2010]{Primary 11F70; Secondary 11E12, 11F27, 11E12}
\keywords{Poisson summation conjecture, minimal representations}

\thanks{The author is partially supported by NSF grant DMS 2400550.
Any opinions, findings, and conclusions or recommendations expressed in this material are those of the author and do not necessarily reflect the views of the National Science Foundation.  The author also thanks D.~Kazhdan for travel support under his ERC grant AdG 669655.
}

\maketitle
\begin{abstract}
We prove a Poisson summation formula for the zero locus of a quadratic form
in an even number of variables with no assumption on the support of the functions involved.  The key novelty in the formula is that all ``boundary terms'' are given either by constants or sums over smaller quadrics related to the original quadric.
We also discuss the link with the classical problem of estimating the number of solutions of a quadratic form in an even number of variables.   To prove the summation formula we compute (the Arthur truncated) theta lift of the trivial representation of $\SL_2(\A_F)$.  As previously observed by Ginzburg, Rallis, and Soudry, this is an analogue for orthogonal groups on vector spaces of even dimension of the global Schr\"odinger representation of 
the metaplectic group. 
\end{abstract}

\tableofcontents

\section{Introduction}
\label{sec:intro}
Let $V_0=\GG_a^{2d}$ be an even dimensional affine space over a number field $F$ and let $Q_{0}$ be a nondegenerate anisotropic quadratic form on $V_{0}.$    We allow the degenerate special case where $V_{0}=\{0\},$ equipped with the trivial quadratic form. 
For $i \geq 0$ let
\begin{align} \label{Vi}
V_i:=V_{0} \oplus \GG_a^{2i}.
\end{align}
We equip $V_i$ with the nondegenerate quadratic form $Q_i$ defined in \S \ref{ssec:groups}.  The quadratic form $Q_i$ is in the Witt class of $Q_0.$

For $F$-algebras $R$ let
\begin{align} \label{Xi}
X_i(R):=\{ u \in V_i(R): Q_i(u)=0\}
\end{align}
and let $X_i^{\circ}:=X_i-\{0\}$.  
Fix $\ell \in \ZZ_{> 0}$.  If $V_0=\{0\}$ we always assume that $\ell>1.$
In this paper we prove a summation formula for $X_{\ell}$ analogous to the Poisson summation formula.  It involves the whole family of spaces $X_{i}$ for $\ell \geq i \geq 0$.  
 
\subsection{A summation formula} 
Let $\A_F$ be the adeles of $F.$
  Fix an additive character $\psi:F \backslash \A_F \to \CC^\times.$  We can then define the Weil representation
\begin{align} \label{rhoi}
\rho_i:=\rho_{Q_i,\psi}:\SL_2(\A_F)  \times \mathcal{S}(V_i(\A_F)) \lto \mathcal{S}(V_i(\A_F))
 \end{align}
 as usual. We have suppressed the orthogonal group in the Weil representation because it plays no role at the moment (but see the discussion below Theorem \ref{thm:minimal}).
We have an action 
\begin{align} \label{Lvee} \begin{split}
L^\vee:\SL_2(\A_F) \times \mathcal{S}(\A_F^2) &\lto \mathcal{S}(\A_F^2)\\
(g,f) &\longmapsto \left(v \mapsto f(g^tv)\right)\end{split}
\end{align}
and thus an action 
\begin{align} \label{ri}
r_i:=\rho_i \otimes L^{\vee}:\SL_2(\A_F) \times \mathcal{S}( V_{i}(\A_F) \oplus \A_F^2) \lto \mathcal{S}( V_{i}(\A_F) \oplus \A_F^2).
\end{align}

For $ i >0$ we define the operator
\begin{align} \label{I} \begin{split}
I:\mathcal{S}(V_i(\A_F) \oplus \A_F^2) &\lto C^\infty(X_i^\circ(\A_F))\\
f &\longmapsto \left(\xi \mapsto \int_{N(\A_F) \backslash \SL_2(\A_F)} r_i(g )f \left(\xi,0,1\right)d\dot{g}\right). \end{split}
\end{align} 
Here $N \leq \SL_2$ is the unipotent radical of the Borel subgroup of upper triangular matrices.
The integral $I(f)$ is absolutely convergent and defines a function in $C^\infty(X^{\circ}_i(\A_F))$ by lemmas \ref{lem:conv} and \ref{lem:unr:conv} below.

For $i > i' \geq 0$ we have operators
\begin{align*}
c_i:\mathcal{S}(V_i(\A_F) \oplus \A_F^2) &\lto \CC\\
d_{i,i'}:\mathcal{S}(V_{i}(\A_F) \oplus \A_F^2)& \lto \mathcal{S}(V_{i'}(\A_F) \oplus \A_F^2)
\end{align*}
defined as in \eqref{ci:def}, \eqref{di0}, and \eqref{di}.  Briefly, $c_i(f)$ is the regularized value of $I(f)$ at $0 \in X_i(F).$  On the other hand $d_{i,i-1}:=d_i$ is given by a partial Fourier transform and then restriction to the complement of a hyperbolic plane.  One then sets $d_{i,i'}=d_{i,i-1} \circ \dots \circ d_{i'+1,i'}.$
By convention, $d_{i,i}$ is the identity.  
We let
\begin{align*}
\mathcal{F}_{\wedge}:\mathcal{S}(\A_F^2) \lto \mathcal{S}(\A_F^2)
\end{align*}
be the usual $\SL_2(\A_F)$-equivariant Fourier transform (see \eqref{Fwedge}).  We extend it to $\mathcal{S}(V_{i}(\A_F) \oplus \A_F^2)$ by setting $\mathcal{F}_{X_i}:=  1_{\mathcal{S}(V_i(\A_F))} \otimes \mathcal{F}_{\wedge}$. 

Let $[\SL_2]:=\SL_2(F) \backslash \SL_2(\A_F).$
Our main theorem is the following summation formula:
\begin{thm} \label{thm:main} For $f \in \mathcal{S}(V_\ell(\A_F) \oplus \A_F^2)$ one has that
\begin{align*}
&\sum_{i=1}^\ell\left(c_i(d_{\ell,i}(f))+\sum_{\xi \in X^{\circ}_i(F)}I(d_{\ell,i}(f))(\xi)\right)+ \kappa d_{\ell,0}(f)(0_{V_0},0,0)\\&
=\sum_{i=1}^\ell \left(c_i(d_{\ell,i}(\mathcal{F}_{X_{\ell}} (f)))+\sum_{\xi \in X^{\circ}_i(F)}I(d_{\ell,i}(\mathcal{F}_{X_{\ell}}(f)))(\xi)\right)+\kappa d_{\ell,0}(\mathcal{F}_{X_{\ell}}(f))(0_{V_0},0,0).
\end{align*}
Here 
\begin{align*} 
    \kappa:=\begin{cases}\mathrm{meas}([\SL_2]) &\textrm{ if }\dim V_0=0 \\
    0 &\textrm{ otherwise.}\end{cases}
\end{align*}
\end{thm}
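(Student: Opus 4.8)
The strategy is to express the left-hand side as a regularized period of a theta function over $[\SL_2]:=\SL_2(F)\backslash\SL_2(\A_F)$ and to deduce invariance under $\mathcal{F}_{X_\ell}$ from Poisson summation on $F^2\subset\A_F^2$. For $i\ge 0$ and $f\in\mathcal{S}(V_i(\A_F)\oplus\A_F^2)$ set $\Theta^{(i)}_f(g):=\sum_{(v,w)\in V_i(F)\oplus F^2}r_i(g)f(v,w)$, which is a function on $[\SL_2]$ by Poisson summation for the lattice $V_i(F)\oplus F^2$; this is a form of the theta lift of the trivial representation of $\SL_2(\A_F)$, as studied by Ginzburg, Rallis and Soudry. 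The key soft point is the identity $\Theta^{(\ell)}_f=\Theta^{(\ell)}_{\mathcal{F}_{X_\ell}f}$: since $\mathcal{F}_{X_\ell}=1_{V_\ell}\otimes\mathcal{F}_{\wedge}$ is $r_\ell(\SL_2(\A_F))$-equivariant we have $r_\ell(g)\mathcal{F}_{X_\ell}f=\mathcal{F}_{X_\ell}r_\ell(g)f$, and then for each fixed $v$ Poisson summation on $F^2$ gives $\sum_{w\in F^2}\mathcal{F}_{\wedge}\big((r_\ell(g)f)(v,\cdot)\big)(w)=\sum_{w\in F^2}(r_\ell(g)f)(v,w)$; summing over $v\in V_\ell(F)$ proves it. Hence any regularization $\Theta^{\mathrm{reg}}(f):=\mathrm{reg}\int_{[\SL_2]}\Theta^{(\ell)}_f(g)\,dg$ depending only on the function $\Theta^{(\ell)}_f$ is automatically $\mathcal{F}_{X_\ell}$-invariant, and Theorem \ref{thm:main} reduces to the identity $\Theta^{\mathrm{reg}}(f)=\sum_{i=1}^\ell\big(c_i(d_{\ell,i}f)+\sum_{\xi\in X^{\circ}_i(F)}I(d_{\ell,i}f)(\xi)\big)+\kappa\, d_{\ell,0}(f)(0_{V_0},0,0)$.

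To prove this identity I would regularize $\int_{[\SL_2]}\Theta^{(i)}_f$ by Arthur truncation — subtracting from $\Theta^{(i)}_f$ a cutoff multiple of its constant term along $N$, the needed absolute convergence statements being contained in Lemmas \ref{lem:conv}, \ref{lem:unr:conv} and \ref{lem:Tate} — and establish the recursion
\[
\mathrm{reg}\int_{[\SL_2]}\Theta^{(i)}_f(g)\,dg\;=\;c_i(f)+\sum_{\xi\in X^{\circ}_i(F)}I(f)(\xi)\;+\;\mathrm{reg}\int_{[\SL_2]}\Theta^{(i-1)}_{d_i(f)}(g)\,dg\qquad(i\ge 1).
\]
Granting the recursion, iterating from $i=\ell$ down to $i=1$ and using $d_{\ell,i}=d_{i+1}\circ\cdots\circ d_\ell$ expresses $\Theta^{\mathrm{reg}}(f)$ as the asserted sum plus $\mathrm{reg}\int_{[\SL_2]}\Theta^{(0)}_{d_{\ell,0}(f)}(g)\,dg$, so that one is reduced to the base case $\mathrm{reg}\int_{[\SL_2]}\Theta^{(0)}_h(g)\,dg=\kappa\,h(0_{V_0},0,0)$. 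This last is a direct computation whose outcome is $\mathrm{meas}([\SL_2])\,h(0)$ when $\dim V_0=0$ — then $\Theta^{(0)}_h(g)=\sum_{w\in F^2}h(g^tw)$ is a standard (non-cuspidal) theta series — and is $0$ when $\dim V_0>0$ by anisotropy of $Q_0$ (in particular $X^{\circ}_0(F)=\varnothing$); these are exactly the two cases in the definition of $\kappa$. Applying the recursion instead to $\mathcal{F}_{X_\ell}f$ shows $\Theta^{\mathrm{reg}}(\mathcal{F}_{X_\ell}f)$ has the analogous form, and the invariance $\Theta^{\mathrm{reg}}(f)=\Theta^{\mathrm{reg}}(\mathcal{F}_{X_\ell}f)$ from the first paragraph then yields the theorem.

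The recursion follows by decomposing $\Theta^{(i)}_f$ according to whether the index $w\in F^2$ is zero. The summands with $w=0$ contribute $\sum_{v\in V_i(F)}\rho_i(g)\big(f(\cdot,0,0)\big)(v)$, the theta function attached to the trivial representation of $\SL_2(\A_F)$ and the quadratic space $(V_i,Q_i)$; writing $V_i=V_{i-1}\oplus\GG_a^2$ with the last $\GG_a^2$ carrying the split binary form, using the factorization $\rho_i=\rho_{i-1}\otimes\rho_{\mathrm{hyp}}$, the fact — built into the definition of the partial Fourier transform $\mathcal{F}_2$ of \eqref{F2} — that $\mathcal{F}_2$ intertwines $\rho_{\mathrm{hyp}}$ with the standard $\SL_2$-action on $\A_F^2$, and Poisson summation in the hyperbolic coordinates, identifies this contribution with $\Theta^{(i-1)}_{d_i(f)}$. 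The summands with $w\ne 0$ form a single $\SL_2(F)$-orbit, namely that of $(0,1)$, with stabilizer $N(F)$; they therefore contribute the Poincar\'e-type series $\sum_{\gamma\in N(F)\backslash\SL_2(F)}\varphi_f(\gamma g)$ with $\varphi_f(g):=\sum_{v\in V_i(F)}r_i(g)f(v,0,1)$. Unfolding this against the truncation and computing its constant term $\int_{N(F)\backslash N(\A_F)}\varphi_f(ng)\,dn=\sum_{v\in X_i(F)}r_i(g)f(v,0,1)$ — where the nontrivial character $t\mapsto\psi(tQ_i(v))$ annihilates the terms with $Q_i(v)\ne 0$ — produces, via the definition of $I$, the sum $\sum_{\xi\in X^{\circ}_i(F)}I(f)(\xi)$ over the nonzero isotropic vectors, together with $\int_{N(\A_F)\backslash\SL_2(\A_F)}r_i(g)f(0_{V_i},0,1)\,d\dot g$ from $v=0$, which is the value of the Tate integral $Z_{r_i}(f,s)$ at $s=2-\tfrac{\dim V_i}{2}$, regularized precisely as in the definition of $c_i$.

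The main obstacle is the regularization bookkeeping in the recursion. One must verify that Arthur truncation is compatible with the $w=0$ versus $w\ne 0$ decomposition and with the hyperbolic-plane reduction, so that the regularized period genuinely splits as the recursion asserts; and one must handle the low-rank cases $\dim V_i\in\{2,4\}$ — equivalently, the cases in which the character $\chi$ of \eqref{chi} is trivial — in which $Z_{r_i}(f,s)$ has a pole at $s\in\{0,1\}$ and $c_i(f)$ is given by the regularized derivative rather than a value. It is this pole analysis, together with the base case, that produces the constant $\kappa$ and is responsible for the failure of $\SL_2(\A_F)$-invariance of the bare functionals $c_i$; carrying it out carefully is the technical heart of the argument.
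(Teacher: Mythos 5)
Your proposal follows the same route as the paper: you form the theta series $\Theta^{(i)}_f=\sum_{(v,w)}r_i(g)f(v,w)$ (the paper's $\Theta_{f'}$, built from $\rho_{i+1}$, equals $\Theta^{(i)}_{\mathcal{F}_2(f')}$ by Lemma \ref{lem:F2:equiv} and Poisson summation, equation \eqref{F2:PS}, so the two formulations coincide); you deduce $\mathcal{F}_{X_\ell}$-invariance of the theta series from Poisson summation on $F^2$, which is exactly how the paper derives Theorem \ref{thm:main} from Theorem \ref{thm:minimal}; you regularize by Arthur truncation; and you prove the recursion by splitting the index $w\in F^2$ into $w=0$ (reduced to $V_{i-1}\oplus\GG_a^2$ via $\mathcal{F}_2$ in the hyperbolic coordinates) and $w\neq 0$ (a single $\SL_2(F)$-orbit of $(0,1)$ with stabilizer $N(F)$), matching Lemma \ref{lem:break} and Theorem \ref{thm:dim:red}, with the anisotropic Lemma \ref{lem:0} as base case. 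Two cautions about details you left implicit. First, your parenthetical ``equivalently, the cases in which the character $\chi$ is trivial'' is not right: $\dim V_i\in\{2,4\}$ and $\chi=1$ are independent conditions, and the pole of $Z_{r_i}(f,s)$ at $s=2-\dim V_i/2$ occurs only when both hold (so that the evaluation point lies in $\{0,1\}$ and the Tate integral actually has a pole there). Second, the truncation of the $w\neq 0$ orbit produces, in addition to the isotropic sum and the $v=0$ Tate term, an extra subtracted piece from the unfolding of the constant term over representatives with $\xi_1'\neq 0$ (Proposition \ref{prop:S3} in the paper), and the base case with $\dim V_0>0$ is not immediate from $X_0^{\circ}(F)=\varnothing$ but requires the Rankin--Selberg argument of Lemma \ref{lem:0}; you correctly flag this bookkeeping as the technical heart, so these are elaborations rather than objections.
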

\noindent   The sums over $\xi$ are in general infinite, but they are easily seen to be absolutely convergent by lemmas \ref{lem:conv} and \ref{lem:unr:conv} below.   In general, the $i$th summand on the left and right are not equal. 
 
 The key novelty in Theorem \ref{thm:main} is that it is valid under no restrictions on the test functions involved and all expressions are explicit and evidently geometric in nature.  Poisson summation formulae for $X_\ell$ and other special classes of varieties exist in the literature; we comment on this in the next paragraph.  However Theorem \ref{thm:main} is the first instance of a Poisson summation formula for a singular variety in which every term has a manifestly geometric interpretation.  We should point out though that two years after the current paper was submitted, the paper \cite{GurK:Auto} was written.  In it, the authors use the structure of the minimal representation to prove a Poisson summation formula very similar to Theorem \ref{thm:main} in the function field case.  They also obtain analogous results for minimal representations on the exceptional groups of type $E_n.$  

If $V_\ell$ is a split quadratic space and we place additional assumptions on $f$ then the theorem is a consequence of more general work of Braverman and Kazhdan \cite{BK:normalized}.  Their work was generalized to arbitrary test functions $f$ in \cite{Choie:Getz}, but several terms were given inexplicitly in terms of residues of Eisenstein series.
Related formulae are also established for arbitrary quadratic spaces in \cite{GetzQuad} and in a special case related to Rankin-Selberg products on $\GL_2$ in \cite{Getz:RSMonoid}.  
The relationship between Eisenstein series and the $V_0=\{0\}$ case of Theorem \ref{thm:main} will be made precise in \cite{Hsu:Companion}.  

The interest in general test functions stems from the fact that restrictions on test functions limit the information one can extract from the formula.  In more detail, it is expected, and can be verified in some cases, that the terms not corresponding to the open orbit under a suitable group action
correspond to poles of appropriate $L$-functions.  Hence choosing test functions that eliminate these contributions hides information about the poles of $L$-functions.

   Theorem \ref{thm:main} is very closely related to the circle method for quadratic forms and we feel that it will be useful for questions in analytic number theory.    To make the relationship transparent we discuss the special case where $F=\QQ$ in \S \ref{sec:circle:method} below.  Interestingly, in analytic number theory it is often only the most degenerate terms that are studied.    

We expect that the image of the map $I:\mathcal{S}(V_i(\A_F) \oplus \A_F^2) \to C^\infty(X^{\circ}(\A_F))$ is the Schwartz space of $X(\A_F)$ in either the sense of \cite{BK:normalized,GurK} or the sense of \cite{Getz:Hsu:Leslie} and the map 
$$
\mathcal{F}_{X_\ell}:\mathcal{S}(V_\ell(\A_F) \oplus \A_F^2) \lto \mathcal{S}(V_\ell(\A_F) \oplus \A_F^2)
$$
descends to the Fourier transform on the Schwartz space of $X_i(\A_F)$ defined in these references. 
 This may be proved locally.  When $V_0=\{0\}$ the non-Archimedean part of the theory will be contained in \cite{Hsu:Companion}.
 Moreover, in loc.~cit.~the local analogues of the operators $d_i$ and $c_i$ are examined; it turns out that they may be understood in terms of germ expansions at the origin.  In particular, at least in the non-Archimedean case, the functions $I(\mathcal{F}_{X_i}(f))$, $I(d_{i}(f))$ can be recovered from $I(f)$ and a similar statement is true for the operators $c_i.$ 

In this paper we do not give an alternate formulation of Theorem \ref{thm:main} in terms of $I(f) \in C^\infty(X^{\circ}_\ell(\A_F))$ as opposed to $f \in \mathcal{S}(V_\ell(\A_F) \oplus \A_F^2).$ We do prove in \S \ref{sec:invariance} that, at least after grouping the $c_i$ appropriately, all expressions in Theorem \ref{thm:main} can be written in terms of the space of coinvariants 
\begin{align} \label{SX}
\mathcal{S}(X_{\ell}(\A_F)):=\mathcal{S}(V_\ell(\A_F) \oplus \A_F^2)_{r_\ell(\SL_2(\A_F))}.
\end{align}
We feel that defining the Schwartz space in this manner and focusing on it instead of a space of functions on $X_\ell^{\circ}(\A_F)$ is reasonable, as we now explain.
Let  $\mathcal{S}_{\mathrm{BK}}(X_\ell(\A_F))$ be the Schwartz space of $X_\ell(\A_F),$ defined as in \cite{Choie:Getz} (therein $\mathcal{S}_{\mathrm{BK}}(X_\ell(\A_F))$ is denoted simply by $\mathcal{S}(X_{\ell}(\A_F))$).
We expect that the map
$$
\mathcal{S}(X_{\ell}(\A_F)) =\mathcal{S}(V_\ell(\A_F) \oplus \A_F^2)_{r_\ell(\SL_2(\A_F))} \lto C^\infty(X^{\circ}_{\ell}(\A_F))
$$
induced by $I$ has image equal to $\mathcal{S}_{\mathrm{BK}}(X_{\ell}(\A_F)).$   Let us assume this.
Then the map $\mathcal{S}(X_{\ell}(\A_F)) \to \mathcal{S}_{\mathrm{BK}}(X_{\ell}(\A_F))$ is formally analogous to a morphism from a stack to a coarse moduli space that resolves the singularities of the coarse moduli space.  The analogy is even tighter if one bears in mind Bernstein's definition of the Schwartz space of a smooth algebraic stack \cite{Sak:Stack}.  For many purposes, the stack  is more convenient than the coarse moduli space itself. 
For similar reasons, we prefer to work with $\mathcal{S}(X_{\ell}(\A_F))$ as opposed to  $\mathcal{S}_{\mathrm{BK}}(X_{\ell}(\A_F)).$  Despite this, to alleviate confusion we will not use the notation $\mathcal{S}(X_{\ell}(\A_F))$ until \S \ref{sec:invariance}.  

\subsection{Canonicity} \label{ssec:canon}
By a \textbf{framed flag of quadratic spaces extending $V_0$} we mean a collection of vector spaces
\begin{align*}
V_0<V_1'<\dots<V_{\ell}'
\end{align*}
equipped with a nondegenerate quadratic form $Q'_\ell$ on $V_{\ell}'$ such that $Q_{\ell}'|_{V_0}=Q_0$
together with $v_{i}',w'_{i} \in V_{i+1}'(F) \cap V_i'^{\perp}(F) $ satisfying $Q'_{\ell}(v_i')=Q_{\ell}'(w_{i}')=0$ and $Q'_{\ell}(v_i'+w_{i}')=1.$  
Here $V'^{\perp}_i \subset V_{\ell}'$ is the space orthogonal to $V_i$ with respect to the pairing attached to $Q_{\ell}'.$
The \textbf{standard framed flag of quadratic spaces extending $V_0$} is 
$$
V_0<V_1<\dots<V_\ell
$$
equipped with $e_{\dim V_0+i},e_{\dim V_0+i+1}$ for all $i.$  
  The definition of $I$ in \eqref{I} and $d_i$ and $c_i$ in \eqref{ci:def}, \eqref{di0} and \eqref{di} below depends on the standard framed flag of quadratic spaces.  We could also define maps $I,$ $d_i,$ $c_i$ with respect to another framed flag of quadratic spaces extending $V_0$  and similarly define 
  $$
  X_i'(R):=\{v \in V_i(R):Q_i(v)=0\}.
  $$
  We refer to \S \ref{sec:canon} for details.

In \S \ref{sec:canon} we construct the universal framed flag of quadratic spaces extending $V_0:$ 
$$
V_0 <V_1^{\mathrm{u}}<\dots <V_\ell^{\mathrm{u}}
$$
Let $X_i^{\mathrm{u}} \subset V_i^{\mathrm{u}}$ be the zero locus of the quadratic form $Q_i^{\mathrm{u}}.$ In loc.~cit.~we moreover define operators 
\begin{align*}
c_i^{\mathrm{u}}:\mathcal{S}(V_i^{\mathrm{u}}(\A_F) \oplus \A_F^2) &\lto \CC,  & 1 \leq i \leq \ell-1\\
d_{\ell,i}^{\mathrm{u}}: \mathcal{S}(V_{\ell}'(\A_F) \oplus \A_F^2)& \lto \mathcal{S}(V_{i}^{\mathrm{u}}(\A_F)\oplus \A_F^2),  & 0 \leq i \leq \ell-1\\
I: \mathcal{S}(V_{i+1}^u(\A_F) )& \lto C^\infty(X_i^{u\circ}(\A_F)),  &0 \leq i \leq \ell-1
\end{align*}
and prove the following corollary of Theorem \ref{thm:main}:
\begin{cor} \label{cor:main} For $f \in \mathcal{S}(V_\ell'(\A_F) \oplus \A_F^2)$ one has that
\begin{align*}
&c_{\ell}(f)+\sum_{\xi \in X'^{\circ}_{\ell}(F)}I(f)(\xi)\\
&+\sum_{i=1}^{\ell-1}\left(c_i^{\mathrm{u}}(d_{\ell,i}^{\mathrm{u}}(f))+\sum_{\xi \in X^{\mathrm{u}\circ}_i(F)}I(d_{\ell,i}^{\mathrm{u}}(f))(\xi)\right)+ \kappa d_{\ell,0}(f)(0_{V_0},0,0)\\&
=c_\ell(\mathcal{F}_{X_\ell'}(f))+\sum_{\xi \in X'^\circ_\ell(F)}I(\mathcal{F}_{X_\ell'}(f))(\xi)\\&+
\sum_{i=1}^{\ell-1} \left(c_i^{\mathrm{u}}(d^{\mathrm{u}}_{\ell,i}(\mathcal{F}_{X_{\ell}'} (f)))+\sum_{\xi \in X^{\mathrm{u}\circ}_i(F)}I(d_{\ell,i}^{\mathrm{u}}(\mathcal{F}_{X_{\ell}'}(f)))(\xi)\right)+\kappa d_{\ell,0}^{\mathrm{u}}(\mathcal{F}_{X_{\ell}'}(f))(0_{V_0},0,0).
\end{align*}
\end{cor}
We do not know how to prove an analogue of Corollary \ref{cor:main} without fixing the subspace $V_0.$  When the quadratic form $Q_{\ell}'$ is split we have $V_0=\{0\}$ so this is not an issue.

\subsection{Sketch of the proof of Theorem \ref{thm:main}} \label{ssec:minimal}

In order to prove Theorem \ref{thm:main}, for $f \in \mathcal{S}(V_{\ell}(\A_F) \oplus \A_F^2)$  
we compute
\begin{align} \label{Theta:int}
\int_{[\SL_2]} \Theta_f^T(g)dg
\end{align}
where $\Theta_f$ is the usual $\Theta$-function and the superscript $T$ denotes the usual truncation operator employed by Arthur.    
\begin{thm} \label{thm:minimal} There is a polynomial $p_f(y_1,y_2) \in \CC[y_1,y_2]$ such that 
$$
\lim_{T \to \infty}\left(\int_{[\SL_2]}\Theta_f^T(g)dg-p_f(T,e^T)\right)=0.
$$
The constant term of $p_f(y_1,y_2)$ is
\begin{align*}
\sum_{i=1}^\ell\left(c_i(d_{\ell,i}(\mathcal{F}_2(f)))+\sum_{\xi \in X^{\circ}_i(F)}I(d_{\ell,i}(\mathcal{F}_2(f)))(\xi)\right)+\kappa d_{\ell,0}(\mathcal{F}_2(f))(0_{V_0},0,0).
\end{align*}
\end{thm}

\noindent Theorem \ref{thm:main} then follows upon observing that $\Theta_{\mathcal{F}_2^{-1}( \mathcal{F}_{X}(f))}(g)=\Theta_{\mathcal{F}_2(f)}(g)$ (see \S \ref{sec:proof} for more details and the proof of Theorem \ref{thm:minimal}).

We now explain a representation-theoretic interpretation of Theorem \ref{thm:minimal}.  For each $i$ let $\mathrm{O}_{V_i}$ (resp.~$\mathrm{GO}_{V_i}$) be the orthogonal (resp.~orthogonal similitude) group of $V_i$.
The partial Fourier transform $\mathcal{F}_2:\mathcal{S}(V_{i}(\A_F) \oplus \A_F^2) \to \mathcal{S}(V_{i}(\A_F) \oplus \A_F^2)$ induced by the local partial Fourier transforms in \eqref{F2}  intertwines the action of $\rho_{i+1}$ and $r_i$ (see Lemma \ref{lem:F2:equiv}).  Hence it yields a $\CC$-linear isomorphism
$$
\mathcal{F}_2:\mathcal{S}(V_{i}(\A_F) \oplus \A_F^2)_{\rho_{i+1}(\SL_2(\A_F) )} \tilde{\lto} \mathcal{S}(V_{i}(\A_F) \oplus \A_F^2)_{r_{i}(\SL_2(\A_F))}
$$
that is equivariant with respect to the action of $\mathrm{O}_{V_{i}}(\A_F)$ (embedded in $\mathrm{O}_{V_{i+1}}(\A_F))$ in the obvious manner).  The left hand side admits an obvious action of $\mathrm{GO}_{V_{i+1}}(\A_F)$ and we can define a representation 
$$
\sigma_i: \mathrm{GO}_{V_{i+1}}(\A_F) \times \mathcal{S}(V_{i}(\A_F) \oplus \A_F^2)_{r_{i}(\SL_2(\A_F)) } \lto \mathcal{S}(V_{i}(\A_F) \oplus \A_F^2)_{r_{i}(\SL_2(\A_F)) }
$$ 
by transport of structure.  We will make the action explicit in Proposition \ref{prop:action} below.  The representation $\sigma_i$ is the big theta lift of the trivial representation of $\SL_2(\A_F)$. 

 Theorem \ref{thm:minimal} amounts to an explicit automorphic realization of $\sigma_i$.  The realization is clearly similar in spirit to the Schr\"odinger model of the metaplectic representation of the two-fold cover of a symplectic group.  In the Archimedean case this analogy is discussed in detail in \cite{Kobayashi:Mano}.  
  We hope that having such an explicit model will aid in applications analogous to the many uses of the Schr\"odinger model.  In particular the explicit geometric description of the model may aid in its use in unfolding arguments that are so crucial in the theory of integral representations of $L$-functions.

We hasten to point out that it is already known that $\sigma_i$ is automorphic.
In the case where $V_0=\{0\}$ it is discussed in detail in \cite{GRS:theta}, which contains a wealth of information about the representation and analogues of it for exceptional groups.  In ibid., the theta lift is realized as a residue of an Eisenstein series.  It is then related to the theta lift of the trivial representation of $\SL_2(\A_F)$ to $\mathrm{O}_{V_i}(\A_F)$ in the special case where $F$ is totally real using Kudla and Rallis' regularized Siegel-Weil identity \cite{Kudla:Rallis:Reg:SW}.  As far as the author knows, this perspective does not lead to a simpler proof of Theorem \ref{thm:minimal} (and hence Theorem \ref{thm:main}) valid for all test functions $f.$  This is one of the main features of the current paper.  For the purpose of comparing our work with \cite{GRS:theta} it is also worth mentioning that we make no use of the theory of Eisenstein series on groups of absolute rank bigger than $1$.  
There is another closely related automorphic realization that was studied by Kazhdan and Polishchuk in \cite{Kazhdan:Polishchuk}.  However, this automorphic realization was not completely determined.  In particular it is not clear how to extract the explicit formulae of Theorem \ref{thm:main} and Theorem \ref{thm:minimal} from \cite{Kazhdan:Polishchuk}.  

We now outline the contents of this paper.   We set notational and measure conventions in \S \ref{sec:prelim}.    In \S \ref{sec:loc} we define local analogues of the operators $I$ and prove several useful properties of them.  The proof of Theorem \ref{thm:minimal} is by induction on $i$.  In \S \ref{sec:isotropic}  the computation of $\int_{[\SL_2]}\Theta_f^T(g)dg$ is inductively reduced to the case where $V_i=V_0$, that is, to the anisotropic case.  This case requires one additional idea and is treated in \S \ref{sec:anisotropic}.  We put the results of \S \ref{sec:isotropic} and \S \ref{sec:anisotropic} together in \S \ref{sec:proof} to prove Theorem  \ref{thm:minimal}.  We then deduce Theorem \ref{thm:main}.  In \S \ref{sec:invariance} we explain how to modify the identity of Theorem \ref{thm:main} so that all terms depend only on the image of $f$ in $\mathcal{S}(X_{\ell}(\A_F)).$
The proof of Corollary \ref{cor:main} is contained in \S \ref{sec:canon}.  In \S \ref{sec:circle:method} we indicate the relationship of our results with classical questions related to the circle method.

\section*{Acknowledgements}

The author thanks D.~Kazhdan for suggesting studying the Poisson summation formula for quadrics and for many useful conversations.  In particular he emphasized to the author the importance of establishing that the linear forms in the Poisson summation formulae are invariant under the action of $\SL_2(\A_F).$   A.~Pollack helped with references, Y.~Sakellaridis explained how to think about coinvariants, and
G.~Savin answered questions about the minimal representation, and H.~Yao pointed out a misprint in the definition of the minimal representation.  C-H. Hsu pointed out many typos in an earlier manuscript and was kind enough to write the companion paper \cite{Hsu:Companion} elucidating the local theory.   The author also thanks H.~Hahn for her help with editing and her constant encouragement.  Finally, the author acknowledges comments by the referee that greatly improved the exposition.  In particular, it was the referee's suggestion that \cite{Hsu:Companion} be developed into an independent work.  It was originally an appendix to this paper.

\section{Preliminaries}
\label{sec:prelim}
\subsection{Groups} \label{ssec:groups}

We equip $V_i:=V_0 \oplus \GG_a^{2i}$ with the quadratic form 
\begin{align} \label{Qi}
Q_i(x):=\frac{x^tJ_ix}{2}, \quad  J_i=\begin{psmatrix} J_0 & & & \\ & J & & \\ & &  \ddots & \\ & & & J \end{psmatrix}
\end{align}
where $J_0$ is the matrix of $Q_{0}$ and 
$J:=\begin{psmatrix} & 1 \\ 1 & \end{psmatrix},$ with $i$ copies of $J.$  
Let
\begin{align} \label{pairing}
\langle v_1,v_2\rangle_i:=v_1^tJ_iv_2
\end{align}
be the pairing attached to the quadratic form $Q_i.$ 
 For $i>i' \geq 0$ we identify $V_{i'}$ with the subspace 
$$
V_{i'} \oplus \{0\}^{2i-2i'} <V_i.
$$
In particular, we have an obvious isomorphism
\begin{align} \label{obvi}
    V_i \oplus \GG_a^2 \tilde{\lto} V_{i+1}.
\end{align}
We let 
\begin{align} \label{Vicirc}
    V_i^{\circ}:=V_i-\{0\}.
\end{align}

As in the introduction $\mathrm{GO}_{V_i}$ is the similitude group of $(V_i,Q_i)$.  Let
\begin{align} \label{lambda}
\lambda:\mathrm{GO}_{V_i} \lto \GG_m
\end{align}
be the similitude norm.  We identify $\mathrm{GO}_{V_i}$ with a subgroup of $\mathrm{GO}_{V_{i+1}}$ via the embedding given on points in an $F$-algebra $R$ by 
\begin{align} \label{GOvi:embed}\begin{split}
\mathrm{GO}_{V_i}(R) &\lto \mathrm{GO}_{V_{i+1}}(R)\\
h &\longmapsto \begin{psmatrix} h & & \\ & \lambda(h) & \\ & & 1 \end{psmatrix}.\end{split}
\end{align}

For $x \in R^{\dim V_i}$ (viewed as a column vector) we let 
\begin{align} \label{ux}
u(x):=\begin{psmatrix} I_{V_i} & x& \\ & 1 & \\-x^tJ_i  & -Q_i(x) & 1\end{psmatrix}
\end{align}
and set
\begin{align} \label{Ni}
N_{i+1}(R):=\{u(x): x \in R^{\dim V_i} \}.
\end{align}
This is the unipotent radical of a maximal parabolic subgroup of (the neutral component of) $\mathrm{GO}_{V_{i+1}}$.  

\subsection{The Weil representation} \label{ssec:WR}
We define the semidirect product $\mathrm{SL}_2 \rtimes \mathrm{GO}_{V_i}$ by stipulating that 
 $$
 (g \rtimes h)(g' \rtimes h)=g\begin{psmatrix} 1 & \\ & \lambda(h)\end{psmatrix} g' \begin{psmatrix} 1 & \\ & \lambda(h) \end{psmatrix}^{-1} \rtimes h h'.
 $$

 Let $v$ be a place of $F$ which we omit from notation, writing $F:=F_v.$ 
Let 
\begin{align} \label{rhoi:loc}
\rho_i:=\rho_{i,\psi}:\mathrm{SL}_2(F) \rtimes \mathrm{GO}_{V_i}(F) \times \mathcal{S}(V_i(F)) \lto \mathcal{S}(V_i(F))
\end{align}
be the Weil representation. 
  Thus $\rho_i$ is a local factor of the adelic Weil representation in \eqref{rhoi}. 

 For the convenience of the reader, we recall the definition of $\rho_i.$  
For $(h,f) \in \mathrm{GO}_{V_i}(F) \times \mathcal{S}(V_i(F))$ one has 
$\rho_i(I_2 \rtimes h)f(v)=f(h^{-1}v).$  
We usually work with the restriction of the Weil representation to $\SL_2(F) \rtimes I_{V_i},$ and hence suppress the similitude group $\mathrm{GO}_{V_i}(F)$ from notation.  

To recall the action of $\mathrm{SL}_2(F)$ we set some notation.
Let 
\begin{align} \label{gamma}
\gamma:=\gamma(Q_i)
\end{align}
be the Weil index (it is independent of $i$) and let
\begin{align} \label{chi}
\chi(a):=(a,(-1)^{\dim V_i/2}\det J_i)
\end{align}
be the usual character attached to the quadratic form (it is independent of $i$).  Here the right hand side of \eqref{chi} is the usual quadratic Hilbert symbol.
The Weil representation is characterized uniquely by the following requirements:
\begin{enumerate}
\item $\rho_i\begin{psmatrix} & 1\\ -1 & \end{psmatrix}f(v)=\gamma\int_{V_i(F)}f(t)\psi(v^tJ_it)dt$
\item $\rho_i\begin{psmatrix} 1 & t\\ & 1 \end{psmatrix}f(v)=\psi(tQ_i(v))f(v) \textrm{ for }t \in F$ \label{20}
\item $\rho\begin{psmatrix} a & \\ & a^{-1}\end{psmatrix}f(v)=\chi(a)|a|^{\dim_F V_i/2}f(av)$ for $a \in F^\times.$
\end{enumerate}
Here $\langle\,,\,\rangle_i$ is the pairing attached to $Q_i$ as in \eqref{pairing}.
It is not obvious that the Weil representation is extends to the semidirect product $\SL_2(F) \rtimes \mathrm{GO}_{V_i}(F);$ see \cite[\S 3.1]{Getz:Liu:Triple} for references.

\subsection{Coinvariants} \label{ssec:coin}
We revert to global notation; thus $F$ is a number field.
Let 
\begin{align} \label{ri:loc}
r_i:=\rho_i \otimes L^\vee:\SL_2(F_v) \times \mathcal{S}(V_i(F_v) \oplus F_v^2) \lto \mathcal{S}(V_i(F_v) \oplus F_v^2).
\end{align}
This is the local analogue of \eqref{ri}.

Temporarily let 
$$
W_v:=\big\langle f-r_i(g)f:(g,f) \in \SL_2(F_v) \times \mathcal{S}(V_{i}(F_v) \oplus F^2_v)  \big\rangle
$$
where the brackets denote the $\CC$-span.  
If $v$ is non-Archimedean set
\begin{align} \label{na:case}
\mathcal{S}(V_{i}(F_v)\oplus F^2_v)_{r_i(\SL_2(F_v))}:=\mathcal{S}(V_{i}(F_v) \oplus F^2_v)/W_v
\end{align}
If $S$ is a set of Archimedean places let 
\begin{align} \label{arch:case}
\mathcal{S}(V_{i}(F_S) \oplus F^2_S)_{r_i(\SL_2(F_S) )}:=\mathcal{S}(V_{i}(F_S) \oplus F^2_S)/\bar{\otimes_{v \in S}W_v}
\end{align}
where $\bar{\otimes_{v \in S}W_v}$ is the closure of $\otimes_{v \in S}W_v$ in the usual Fr\'echet space topology on $\mathcal{S}(V_{i}(F_S) \oplus F^2_S)$.  Let $\infty$ denote the set of infinite places of $F.$  We then set
\begin{align*}
&\mathcal{S}(V_i(\A_F) \oplus \A_F^2)_{r_i(\SL_2(\A_F))}\\&:=\mathcal{S}(V_i(F_\infty) \oplus F_\infty^2)_{r_i(\SL_2(F_\infty))} \otimes \mathcal{S}(V_i(\A_F^\infty) \oplus (\A_F^\infty)^2)_{r_i(\SL_2(\A_F^\infty))}
\end{align*}
where 
$\mathcal{S}(V_i(\A_F^\infty) \oplus (\A_F^\infty)^2)_{r_i(\SL_2(F_v))}=\otimes_{v \nmid \infty}'\mathcal{S}(V_i(F_v) \oplus F_v^2)_{r_i(\SL_2(F_v))}.$  Here the restricted direct product is with respect to the image of $\one_{V_i(\OO_{F_v}) \oplus \OO_{F_v}^2}$ in $\mathcal{S}(V_i(F_v) \oplus F_v^2)_{r_i(\SL_2(F_v))}.$

\subsection{Normalization of measures and the Harish-Chandra map}
\label{ssec:measures}
Fix a nontrivial character $\psi:F \backslash \A_F \to \CC^\times$. 
For each place $v$ of $F$ we normalize the Haar measure on $F_v$ so that the Fourier transform with respect to $\psi_v$ is self-dual.  Then the induced measure on $\A_F$ gives $F \backslash \A_F$ measure $1$ \cite[\S VII.2]{Weil:Basic:NT}.  For each place $v$ of $F$ we give $F_v^\times$ the measure 
$$
d^\times x_v:=\zeta_v(1)|x|_v^{-1}dx_v.
$$
The $d^\times x_v$ induce a measure on $\A_F^\times.$
If $F_v$ is unramified over its prime field and $\psi_v$ is unramified then $dx_{v}(\OO_v)=1$, where $\OO_v$ is the ring of integers of $F_v$.  Let $\infty$ denote the set of infinite places of $F$ and let $F_\infty:=\prod_{v|\infty}F_v.$

As usual, let 
\begin{align} \label{A}
A_{\GG_m} \leq F_\infty^\times
\end{align}
be the diagonal copy of $\RR_{>0}$ and let
\begin{align} \label{1:group}
    (\A_F^\times)^1:=\{x \in \A_F^\times:|x|=1\}.
\end{align}
We choose the Haar measure on $A_{\GG_m}$ so that the isomorphism $|\cdot|:A_{\GG_m} \tilde{\to} \RR_{>0}$ is measure preserving and then endow  $(\A_F^\times)^1$ with the unique Haar measure such that the  canonical isomorphism
$$
A_{\GG_m} \times (\A_F^\times)^1 \tilde{\lto}\A_F^\times
$$
is measure-preserving.

Let $T_2 < B < \SL_2$ be the maximal torus of diagonal matrices and the Borel subgroup of upper triangular matrices, respectively.   Let $N$ be the unipotent radical of $B.$ 
Let $K=\prod_{v \infty}K_v$ be the usual maximal compact subgroup; thus $K_v$ is $\mathrm{SO}_2(\RR)$ (resp.~$\mathrm{SU}_2(\RR)$,  $\mathrm{SL}_2(\OO_v)$) if $v$ is real (resp.~ complex, finite).  By the Iwasawa decomposition
\begin{align}
\SL_2(\A_F)=N(\A_F) T_2(\A_F) K.
\end{align}
For each place $v$ of $F$ we give $\SL_2(F_v)$ the Haar measure 
$$
d\left(\begin{psmatrix} 1 & t\\ & 1\end{psmatrix} \begin{psmatrix} a & \\ & a^{-1}\end{psmatrix} k \right)=dt\frac{d^\times a }{|a|^2} dk
$$
where $(t,a,k) \in F_v \times F_v^\times \times K_v$ and $dk$ gives $K_v$ measure $1$.  This induces a measure on $\SL_2(\A_F).$

 For $g \in \SL_2(\A_F)$ write $g=n\begin{psmatrix}a & \\ & a^{-1} \end{psmatrix}k$ with $(n,a,k) \in N(\A_F) \times \A_F^\times \times K$ and define
\begin{align} \label{HB}
H_B(nak):=\log |a| \in \RR.
\end{align}
We also use the obvious local analogue of this notation.

If $G$ is an algebraic group over $F$ we let
$$
[G]:=G(F) \backslash G(\A_F).
$$
We let $\zeta(s)=\zeta_\infty(s)\zeta^\infty(s)$ denote the completed $L$-function.
In the following lemma the measure on $[\SL_2]$ is induced from the measure on $\SL_2(\A_F)$ fixed above.
\begin{lem}\label{lem:tama} 
We have $\mathrm{meas}([\SL_2])=|D|^{1/2}\zeta(2)$, where $D$ is the absolute discriminant of $F$.
\end{lem} 

\begin{proof}
It is well known that the Tamagawa number of $\SL_2$ is $1$ \cite[\S 5.3]{Platonov:Rapinchuk:AGNT}.  The measure we gave above on $\SL_2(\A_F)$ is
\begin{align*}
    |D|^{3/2}|D|^{-1/2}|D|^{-1/2}\cdot  \zeta(2)=|D|^{1/2}\zeta(2).
\end{align*}
times the Tamagawa measure by 
the computation in  \cite[\S 6]{Borel:3fold}.
\end{proof}

\subsection{Little-o notation}
Let $a,b:\RR_{>0} \to \CC$ be functions, possibly depending on some object $f.$ 
One writes
$$
a(T)=b(T)+o_f(1)
$$
if for all $\epsilon>0$ there is a $T_0 \in \RR_{>0},$ possibly depending on $\epsilon$ and $f,$ such that $|a(T)-b(T)|<\epsilon.$

\section{Local integrals} \label{sec:loc}

In this section we fix a place $v$ of $F$ and omit it from notation, writing $F:=F_v$.  Thus $F$ can be any local field of characteristic $0.$

Let $f \in \mathcal{S}( V_{i}(F) \oplus F^2)$. 
For $\xi \in X^{\circ}_i(F)$ let
\begin{align} \label{I:loc:def}
I(f)(\xi):=\int_{N(F) \backslash \SL_2(F)}r_i(g)f(\xi,0,1)d \dot{g}.
\end{align}
Here, as above, $N \leq \SL_2$ denotes the unipotent radical of the Borel subgroup of upper triangular matrices, and $r_i$ is defined as in \eqref{ri:loc}. This is the local analogue of the operator \eqref{I}.  

Let $V_i^\circ:=V_i-\{0\}$ as in \eqref{Vicirc}.  
When $F$ is non-Archimedean and 
$$
\xi=(\xi_1,\dots,\xi_{\dim V_i}) \in V_i(F)
$$ let
$$
|\xi|:=\max(|\xi_1|,\dots,|\xi_{\dim V_i}|).
$$
When $F$ is Archimedean choose a norm $|\cdot|:V_i(F) \to \RR_{\geq 0}.$  

\begin{lem} \label{lem:conv} The integral
\begin{align} \label{for:eps}
\int_{N(F) \backslash \SL_2(F)}|r_i(g)f\left(\xi,0,1\right)|d \dot{g}
\end{align}
is convergent for all $\xi \in V^\circ_i(F)$.  Let $\epsilon>0$ and $A \in \ZZ_{\geq 0}.$  The integral \eqref{for:eps} is bounded by a constant depending on  $f,$ $F,$ $A$ (and $\epsilon$ if $\dim V_i=4$) times 
$$
\max(|\xi|,1)^{-A}\begin{cases} \min(|\xi|,1)^{2-\dim V_i/2} &\textrm{ if }\dim V_i>4\\
\min(|\xi|,1)^{-\varepsilon} & \textrm{ if }\dim V_i=4\\
1 &\textrm{ if }\dim V_i=2.
\end{cases}
$$
In the non-Archimedean case, this can be strengthened to the assertion that the support of the integral is contained in the intersection of a compact subset of $V(F)$ with $V^{\circ}(F)$.  
\end{lem}

\begin{proof}
Decomposing the Haar measure on $\SL_2(F)$ using the Iwasawa decomposition we see that the integral in the lemma is equal to
\begin{align*}
\int_{F^\times \times K} &
\left|r_i\left(
 \begin{psmatrix} a & \\ & a^{-1}\end{psmatrix}k\right)f\right|\left(\xi,0,1
\right)\frac{d^\times a dk}{|a|^2}\\
=&\int_{F^\times \times K} \left|r_i( k)f\right|\left(
a\xi,0,a^{-1}\right)\frac{|a|^{\dim V_i/2}d^\times a dk}{|a|^2}\\
=&\int_{F^\times \times K} \left|r_i( k)f\right|\left( a^{-1}\xi,0, a\right)|a|^{2-\dim V_i/2}d^\times a dk.
\end{align*}
Here the powers of $a$ appear due to the fact that $r_i=\rho_i \otimes L^\vee$ (see \S \ref{ssec:WR} for the definition of the Weil representation $\rho_i$).

Now assume that $F$ is non-Archimedean.  Then the integral is compactly supported as a function of $\xi \in V_i(F).$  Moreover, it is bounded by a constant depending on $f$ times
\begin{align}
\int_{ |\xi| \ll_f |a| \ll_f 1} |a|^{2-\dim V_i/2}d^\times a.
\end{align}
This trivially yields the estimate in the lemma.

The Archimedean case is essentially the same, but requires a small computation which is contained in  \cite[Lemma 8.1]{Getz:Liu:Triple}.
\end{proof}
We point out that the integral in Lemma \ref{lem:conv} is well-defined for all $\xi \in V_i^\circ(F),$ not just $\xi \in X^{\circ}_i(F),$ due to the absolute values.  We require the bound for all $\xi \in V_i^\circ(F)$ to justify bringing certain integrals inside sums in Lemma \ref{lem:break} below.

  The integral $I$ defined in \eqref{I:loc:def} induces a morphism
$$
I:\mathcal{S}(V_{i}(F) \oplus F^2) \lto C^\infty(X^{\circ}_i(F))
$$
that factors through $\mathcal{S}(V_i(F) \oplus F^2)_{r_i(\SL_2(F))}$.  

We have two representations of $\mathrm{SL}_2(F) $ on $\mathcal{S}(V_{i}(F) \oplus F^2)$, namely $\rho_{i+1}(g )$ and $r_i(g)$ (see \eqref{rhoi:loc} and \eqref{ri:loc}).  We now relate these two actions.
 Define a transform 
\begin{align} \begin{split}
\mathcal{F}_2:\mathcal{S}(F^2) &\lto \mathcal{S}(F^2)\\
f &\longmapsto \left((u_1,u_2) \mapsto\int_{F} f(u_1,x)\psi(u_2x)dx \right)\end{split}
\end{align}
where $(u_1,u_2) \in F^2.$  
The subscript $2$ is a reminder that this is the Fourier transform in the second variable. Identifying $V_i(F) \oplus F^2=V_{i+1}(F)$  using the obvious isomorphism \eqref{obvi}, the Fourier transform extends to 
\begin{align} \label{F2}
\mathcal{F}_2:= 1_{\mathcal{S}(V_i(F))} \otimes \mathcal{F}_2:\mathcal{S}( V_{i+1}(F) ) \lto \mathcal{S}(V_{i}(F) \oplus F^2).
\end{align}
The canonical (right) action  $V_{i+1}(F) \times \mathrm{GO}_{V_{i+1}}(F) \to V_{i+1}(F)$ restricts to an action of $\mathrm{GO}_{V_i}(F)$ via the embedding $\mathrm{GO}_{V_i}(F) \to \mathrm{GO}_{V_{i+1}}(F)$ of \eqref{GOvi:embed}.  It then pulls back to an action of $\mathrm{GO}_{V_i}(F)$ on $V_i(F) \oplus F^2$ under \eqref{obvi}.  The map $\mathcal{F}_2$ intertwines the two actions of $\mathrm{GO}_{V_i}(F).$

We also have the following equivariance property:

\begin{lem} \label{lem:F2:equiv}
For $g \in \SL_2(F),$ one has that
$$
\mathcal{F}_2 \circ \rho_{i+1}(g )=r_i(g) \circ \mathcal{F}_2.
$$
\end{lem}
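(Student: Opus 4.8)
The plan is to reduce the identity to a statement about the Weil representation of a single hyperbolic plane and then verify that statement on a generating set of $\SL_2(F)$. Under the identification $V_{i+1}(F)=V_i(F)\oplus F^2$ the quadratic form decomposes as $Q_{i+1}(x,u_1,u_2)=Q_i(x)+u_1u_2$, so $(V_{i+1},Q_{i+1})$ is the orthogonal direct sum of $(V_i,Q_i)$ with the split plane $H:=(F^2,(u_1,u_2)\mapsto u_1u_2)$. The Weil representation of an orthogonal direct sum is the (completed) tensor product of the Weil representations of the summands, so $\rho_{i+1}=\rho_i\,\widehat{\otimes}\,\rho_H$, where $\rho_H:\SL_2(F)\times\mathcal{S}(F^2)\to\mathcal{S}(F^2)$ is the Weil representation attached to $H$ and $\psi$; in particular the discriminant character of $H$ is trivial, so the character $\chi$ attached to $V_{i+1}$ agrees with that attached to $V_i$. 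Since $\mathcal{F}_2=1_{\mathcal{S}(V_i(F))}\otimes\mathcal{F}_2$ acts only in the $F^2$-variables and $r_i=\rho_i\otimes L^\vee$, the claimed identity is equivalent to the single statement
$$
\mathcal{F}_2\circ\rho_H(g)=L^\vee(g)\circ\mathcal{F}_2\qquad(g\in\SL_2(F))
$$
on $\mathcal{S}(F^2)$; that is, after conjugation by $\mathcal{F}_2$ the Weil model of $\rho_H$ becomes the ``geometric'' linear model $L^\vee$. This reduction is exactly the familiar fact that the Weil representation of a hyperbolic plane is, in a suitable mixed model, the regular representation of $\SL_2$ on an isotropic line.

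To prove this last identity I would check it on the generators $N(F)$, the diagonal torus $T_2(F)$, and the Weyl element $w=\begin{psmatrix}&1\\-1&\end{psmatrix}$ of $\SL_2(F)$, using the standard formulas for $\rho_H$: the unipotent $\begin{psmatrix}1&b\\&1\end{psmatrix}$ acts by multiplication by $\psi(bu_1u_2)$; the torus element $\begin{psmatrix}a&\\&a^{-1}\end{psmatrix}$ acts by $\phi\mapsto|a|\,\phi(a\,\cdot)$ (the Weil index and discriminant character of $H$ being trivial since $H$ is split); and $w$ acts by the $\psi$-Fourier transform on $\mathcal{S}(F^2)$ associated to the bilinear form of $H$. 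On the other side $L^\vee(g)f=(v\mapsto f(g^tv))$. For the unipotent element this is the elementary duality that modulation in the $u_2$-variable is carried by $\mathcal{F}_2$ to translation, together with a check that the translation produced is precisely $L^\vee\begin{psmatrix}1&b\\&1\end{psmatrix}$; for the torus it is the dilation property of the Fourier transform; and for $w$ it follows because the full two-variable Fourier transform defining $\rho_H(w)$ becomes, after conjugation by $\mathcal{F}_2$ and a Fourier inversion, the substitution $(u_1,u_2)\mapsto(-u_2,u_1)$, which is exactly $L^\vee(w)$. One may in fact avoid the torus and $w$ altogether, since $\SL_2(F)$ is generated by $N(F)$ together with its opposite unipotent $wN(F)w^{-1}$, so that only the modulation–translation duality is needed.

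The main obstacle is bookkeeping rather than anything conceptual: one must pin down the normalizations in the formulas for $\rho_{i+1}$ and for $\rho_H(w)$ — the Weil index constants, the character $\chi$, and the factors $|a|^{\dim V/2}$ (including any powers of $|2|$ coming from the normalization $Q_i(x)=x^tJ_ix/2$) — and verify that the self-dual Fourier transform on $V_{i+1}(F)$ factors as the tensor product of the self-dual Fourier transform on $V_i(F)$ with the self-dual Fourier transform on $F^2$ relative to the measure conventions of \S\ref{ssec:measures}, and that $\mathcal{F}_2$ is precisely (not merely up to a scalar) the partial Fourier transform that converts the Weil model of the split plane into $L^\vee$. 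Because $H$ is split, all the relevant Weil constants and the discriminant character of $H$ are trivial, so once the decomposition $\rho_{i+1}=\rho_i\,\widehat{\otimes}\,\rho_H$ is in hand the remaining verification on generators is a short explicit computation.
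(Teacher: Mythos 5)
Your proposal is correct and, at the computational level, is essentially the paper's argument: both verify the intertwining relation on the Bruhat generators $T_2(F)$, $N(F)$, and the Weyl element $w$, using the standard Weil-representation formulas and the modulation/translation/dilation dualities of $\mathcal{F}_2$. The genuine difference is your preliminary reduction: you invoke the tensor decomposition $\rho_{i+1}=\rho_i\,\widehat\otimes\,\rho_H$ for the orthogonal direct sum $V_{i+1}=V_i\perp H$, which collapses the statement to $\mathcal{F}_2\circ\rho_H(g)=L^\vee(g)\circ\mathcal{F}_2$ on $\mathcal{S}(F^2)$. The paper does not isolate this factorization; it carries the $V_i$-coordinate through each line as a spectator, and the formulas $\chi_{i+1}=\chi_i$ and $\gamma(Q_{i+1})=\gamma(Q_i)$ are absorbed silently into its stated conventions (that $\chi$ and $\gamma$ are independent of $i$). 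Your version makes the conceptual content — a partial Fourier transform turning the split-plane Weil model into the geometric action $L^\vee$ — explicit, at the modest cost of having to justify the tensor decomposition and the triviality of the Weil index and discriminant character of $H$, both of which you correctly flag. In short, a cleaner packaging of the same proof.

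One remark in your proposal is flawed and should be dropped: the suggestion that one can skip the torus and $w$ by generating $\SL_2(F)$ from $N(F)$ and $wN(F)w^{-1}$ using ``only modulation--translation duality.'' The Weil action of a lower unipotent $\begin{psmatrix}1&0\\c&1\end{psmatrix}$ on $\mathcal{S}(F^2)$ is a Gaussian-type convolution (its standard derivation already factors through $w$), not a modulation, so the check on $\bar N$ requires the same double-integral/Fourier-inversion work as the check on $w$. The generating-set reduction is valid, but it does not save the computation. Since you also carried out the check on all three of $T_2$, $N$, and $w$, this does not affect the correctness of the main argument.
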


\begin{proof}
Let $f \in \mathcal{S}(V_i(F) \oplus F^2)$.
By the Bruhat decomposition, it suffices to check the identity for $g \in T_2(F)$, $g \in N(F)$ and $g=\begin{psmatrix} & 1 \\ -1 & \end{psmatrix}$. We use the formulae for the Weil representation recalled in \S \ref{ssec:WR} in the argument below.

 For $a \in F^\times,$ one has
\begin{align*}
\mathcal{F}_2 \circ \rho_{i+1}\begin{psmatrix} a & \\ & a^{-1}\end{psmatrix}f(\xi,u_1,u_2)&=\chi(a)|a|^{\dim V_i/2+1}\int_{F}f
\left(a\xi,a u_1 ,ax\right)\psi(u_2x)dx\\
&=r_i\begin{psmatrix} a & \\ & a^{-1}\end{psmatrix}\mathcal{F}_2(f)\left(\xi,u_1,u_2\right).
\end{align*}
For $ t\in F$ one has 
\begin{align*}
\mathcal{F}_2 \circ \rho_{i+1}\begin{psmatrix} 1 & t\\ & 1\end{psmatrix}f(\xi,u_1,u_2)&=\mathcal{F}_2(\psi(tQ_{i+1}(\cdot))f)\left(\xi,u_1,u_2\right)\\
&=\int_{F} f\left(\xi,u_1, x\right)\psi(tQ_i(\xi)+tu_1x)\psi(u_2x)dx\\
&=\psi(tQ_i(\xi))\mathcal{F}_2(f)\left(\xi,u_1,u_2+tu_1\right)\\
&=r_i\begin{psmatrix}1 & t\\ &1 \end{psmatrix}\mathcal{F}_2(f)\left(\xi,u_1,u_2\right).
\end{align*}
Moreover
\begin{align*}
&\mathcal{F}_2 \circ \rho_{i+1}\begin{psmatrix} & 1\\-1 &\end{psmatrix}f\left(\xi,u_1,u_2\right)\\&=\gamma \int_{F}\psi(u_2x) \Bigg(\int_{V_i(F) \times F^2} f\left( w,w_1,w_2\right)\psi(\langle \xi,w \rangle_{i}+u_1w_2+xw_1) dw_1dw_2dw\Bigg) dx\\
&=\gamma \int_{F \times V(F)} f\left( w,-u_2,w_2\right)\psi(\langle \xi,w \rangle_{i}+ u_1w_2) dw_2dw\\
&=r_i\begin{psmatrix} & 1\\-1 &\end{psmatrix}\circ \mathcal{F}_2(f)\left(\xi,u_1,u_2\right).
\end{align*}
\end{proof}

The transform $\mathcal{F}_2$ plays key role in the proof of Theorem \ref{thm:main}, see Lemma \ref{lem:break} below.  It is also important because it is what allows us to view $\mathcal{S}(X_i(F))$ as a representation of $\mathrm{GO}_{V_{i+1}}(F)$, not just $\mathrm{GO}_{V_i}(F),$ as we now explain.
By Lemma \ref{lem:F2:equiv} the isomorphism \eqref{F2} yields an isomorphism
$$
\mathcal{F}_2:\mathcal{S}(V_{i+1}(F))_{\rho_{i+1}(\SL_2(F))} \tilde{\lto} \mathcal{S}(V_{i}(F) \oplus F^2)_{r_i(\SL_2(F))}.
$$
We have an action
\begin{align} \label{L} \begin{split}
L: \mathrm{GO}_{V_{i+1}}(F) \times \mathcal{S}(V_{i+1}(F))_{\rho_{i+1}(\SL_2(F))} &\lto \mathcal{S}(V_{i+1}(F))_{\rho_{i+1}(\SL_2(F))}\\
(h,f) &\longmapsto (x \mapsto f(h^{-1}x)).
\end{split}
\end{align}
Thus we obtain an action of
$\mathrm{GO}_{V_{i+1}}(F)$ on $\mathcal{S}(V_i(F) \oplus F^2)$ by transport of structure: 
\begin{align} \label{sigi}
\sigma_i(h):=\mathcal{F}_2 \circ L(h) \circ  \mathcal{F}_2^{-1}.
\end{align}
The group $\SL_2(F) \rtimes \mathrm{GO}_{V_{i+1}}(F)$ acts on $\mathcal{S}(V_{i+1}(\A_F))$ via
  $$
  \rho_{i+1}(g \rtimes h)f:=\rho_{i+1}(g)(L(h)f)
  $$
(see \S \ref{ssec:WR}).
Thus $r_i$, originally defined as a representation of $\SL_2(F)$, extends to an action 
$$
r_i:\SL_2(F) \rtimes \mathrm{GO}_{V_{i+1}}(F) :\mathcal{S}(V_i(F) \oplus F^2) \lto \mathcal{S}(V_i(F) \oplus F^2)
$$
given by 
$$
  r_{i}(g \rtimes h)f:=r_{i}(g)(\sigma_i(h)f).
$$
This implies that $\sigma_i$ descends to an action
\begin{align}
\sigma_i:\mathrm{GO}_{V_{i+1}}(F)  \times \mathcal{S}(V_i(F) \oplus F^2)_{r_i(\mathrm{SL}_2(F))} \lto \mathcal{S}(V_i(F) \oplus F^2)_{r_i(\mathrm{SL}_2(F))}.
\end{align}

For $f \in \mathcal{S}(F^2)$ let
\begin{align} \label{Fwedge} \begin{split}
\mathcal{F}_\wedge(f)(v):&=\int_{F^2} f(w)\psi(w \wedge v)dw\\
&=\int_{F^2}f\begin{psmatrix} w_1 \\  w_2 \end{psmatrix}\psi(w_1v_2-w_2v_1)dw_1dw_2.
\end{split}
\end{align}
Thus $\mathcal{F}_\wedge$ is an $\SL_2(F)$-equivariant Fourier transform: 
\begin{align}
\mathcal{F}_\wedge:\mathcal{S}(F^2) \lto \mathcal{S}(F^2).
\end{align} 
We extend it to $\mathcal{S}(V_{i+1}(F))$ by setting $\mathcal{F}_\wedge:= 1_{\mathcal{S}(V_i(F))}\otimes\mathcal{F}_{\wedge} $.   It clearly descends to a linear isomorphism
\begin{align} \label{FXi}
\mathcal{F}_{X_i}:\mathcal{S}(V_i(F) \oplus F^2)_{r_i(\SL_2(F))} \lto \mathcal{S}(V_i(F) \oplus F^2)_{r_i(\mathrm{SL}_2(F))}.
\end{align}

It is useful to explicitly compute how the action of $\mathrm{GO}_{V_{i+1}}(F)$  interacts with the operator $I$.  
We use notation from \S \ref{ssec:groups}. 

\begin{prop} \label{prop:action}
Let $f \in \mathcal{S}(V_i(F) \oplus F^2)$ and $\xi \in X_i^\circ(F)$.    
For $a \in F^\times,$ $h \in \mathrm{GO}_{V_i}(F)$ and $x \in F^{\dim V_i}$ one has that 
\begin{align}
\label{1}I\left(\sigma_i(h)f \right)(\xi)&=|\lambda(h)|I(f)(h^{-1}\xi),\\
\label{2} I\left(\sigma_i\begin{psmatrix}  I_{V_i} & & \\ & a &  \\ & & a^{-1}\end{psmatrix}f \right) (\xi)&=\chi(a)|a|^{1-\dim V_i/2}
I(f)(a^{-1}\xi),\\ \label{3}
I\left(\sigma_i(u(x))f \right)(\xi)&=\overline{\psi}(\langle x,\xi\rangle_i)I(f)(\xi),\\\label{4}
\sigma_i\begin{psmatrix} I_{V_i}  & & \\  & & 1 \\ & 1 & \end{psmatrix}f&=\mathcal{F}_{X_i}(f).
\end{align}
\end{prop}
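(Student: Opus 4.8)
\emph{Strategy and inputs.} Since $\sigma_i$, $\mathcal{F}_{X_i}$ and $I$ all descend from $\mathcal{S}(V_i(F)\oplus F^2)$ to $\mathcal{S}(X_i(F))$, it suffices to lift $f$ to $\mathcal{S}(V_i(F)\oplus F^2)$ and verify the four identities there. The ingredients are: the definition $\sigma_i(h)=\mathcal{F}_2\circ L(h)\circ\mathcal{F}_2^{-1}$; the intertwining $\mathcal{F}_2\circ\rho_{i+1}(g)=r_i(g)\circ\mathcal{F}_2$ of Lemma \ref{lem:F2:equiv}; the fact that $g\rtimes h\mapsto\rho_{i+1}(g)L(h)$ is a representation of $\SL_2(F)\rtimes\mathrm{GO}_{V_{i+1}}(F)$ (\S\ref{sec:loc}), which is equivalent to the mixed-model identity $\rho_{i+1}(g)L(h)=L(h)\rho_{i+1}(g^{(h^{-1})})$ with $g^{(h)}:=\begin{psmatrix}1&\\&\lambda(h)\end{psmatrix}g\begin{psmatrix}1&\\&\lambda(h)^{-1}\end{psmatrix}\in\SL_2(F)$; the explicit action of the four group elements on $V_{i+1}(F)=V_i(F)\oplus F^2$; the Weil representation formula $r_i\begin{psmatrix}a&\\&a^{-1}\end{psmatrix}\Psi(\eta,u_1,u_2)=\chi(a)|a|^{\dim V_i/2}\Psi(a\eta,au_1,a^{-1}u_2)$, already used in the proof of Lemma \ref{lem:conv}; and Lemma \ref{lem:conv} itself for the absolute convergence of the integrals involved.

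\emph{Identity \eqref{4}.} This is an identity of operators, with no integral, so I treat it first. For $w=\begin{psmatrix}I_{V_i}&&\\&&1\\&1&\end{psmatrix}$, the operator $L(w)$ on $\mathcal{S}(V_i(F)\oplus F^2)$ interchanges the two $F$-coordinates, and $\mathcal{F}_2$ is the partial Fourier transform in the second of them; Fubini for Schwartz functions then identifies $\mathcal{F}_2\circ L(w)\circ\mathcal{F}_2^{-1}$ with $1_{\mathcal{S}(V_i(F))}\otimes\mathcal{F}_\wedge=\mathcal{F}_{X_i}$. This is the standard realization of the symplectic Fourier transform \eqref{Fwedge} as (partial Fourier transform)$\circ$(coordinate swap)$\circ$(inverse partial Fourier transform); the character normalizations agree by inspection.

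\emph{Identities \eqref{1}, \eqref{3}, \eqref{2}.} Put $\phi:=\mathcal{F}_2^{-1}f$. For each $g\in\SL_2(F)$, Lemma \ref{lem:F2:equiv} gives $[r_i(g)\sigma_i(h)f](\xi,0,1)=[\mathcal{F}_2\,\rho_{i+1}(g)L(h)\phi](\xi,0,1)=\int_F[\rho_{i+1}(g)L(h)\phi](\xi,0,t)\,\psi(t)\,dt$, and by the mixed-model identity the integrand equals $[\rho_{i+1}(g^{(h^{-1})})\phi](h^{-1}(\xi,0,t))$. It therefore remains to track how $h^{-1}$ moves $(\xi,0,t)$ and then integrate over $N(F)\backslash\SL_2(F)$. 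For $h\in\mathrm{GO}_{V_i}(F)$, embedded as $\begin{psmatrix}h&&\\&\lambda(h)&\\&&1\end{psmatrix}$, one has $h^{-1}(\xi,0,t)=(h^{-1}\xi,0,t)$, so the $t$-integral reassembles $\mathcal{F}_2$ and $[r_i(g)\sigma_i(h)f](\xi,0,1)=[r_i(g^{(h^{-1})})f](h^{-1}\xi,0,1)$; integrating over $N(F)\backslash\SL_2(F)$ and using that $g\mapsto g^{(h^{-1})}$ is an automorphism of $\SL_2(F)$ of module one dilating $N(F)\cong F$ by the factor $\lambda(h)$, hence multiplying the relatively invariant measure by $|\lambda(h)|$ — the integrand descending because $Q_i(h^{-1}\xi)=\lambda(h)^{-1}Q_i(\xi)=0$ — yields $I(\sigma_i(h)f)(\xi)=|\lambda(h)|I(f)(h^{-1}\xi)$, which is \eqref{1}. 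For $h=u(x)$, so that $\lambda(h)=1$ and $g^{(h^{-1})}=g$, a short matrix computation gives $u(x)^{-1}=u(-x)$ — using $Q_i(x+y)=Q_i(x)+Q_i(y)+\langle x,y\rangle_i$ — and hence $u(-x)(\xi,0,t)=(\xi,0,x^t\xi+t)$; the substitution $t\mapsto t-x^t\xi$ pulls out the scalar $\bar{\psi}(x^t\xi)$ and leaves the partial Fourier transform of $\rho_{i+1}(g)\phi$ at $(\xi,0,1)$, so $[r_i(g)\sigma_i(u(x))f](\xi,0,1)=\bar{\psi}(x^t\xi)[r_i(g)f](\xi,0,1)$ and integration gives \eqref{3}. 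For $h=\begin{psmatrix}I_{V_i}&&\\&a&\\&&a^{-1}\end{psmatrix}$, again with $\lambda(h)=1$, one has $h^{-1}(\xi,0,t)=(\xi,0,at)$, so the substitution $t\mapsto a^{-1}t$ gives $[r_i(g)\sigma_i(h)f](\xi,0,1)=|a|^{-1}[r_i(g)f](\xi,0,a^{-1})$; the torus formula rewrites this as $\chi(a)^{-1}|a|^{-1-\dim V_i/2}$ times $[r_i\bigl(\begin{psmatrix}a&\\&a^{-1}\end{psmatrix}g\bigr)f](a^{-1}\xi,0,1)$, after which integration over $N(F)\backslash\SL_2(F)$, the fact that left translation by $\begin{psmatrix}a&\\&a^{-1}\end{psmatrix}$ multiplies the measure by $|a|^2$, and $\chi(a)^2=1$ give $I(\sigma_i(h)f)(\xi)=\chi(a)|a|^{1-\dim V_i/2}I(f)(a^{-1}\xi)$, which is \eqref{2}.

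\emph{Main obstacle.} No single step is deep, but the computation is delicate in its bookkeeping: correctly deriving and applying the mixed-model identity $\rho_{i+1}(g)L(h)=L(h)\rho_{i+1}(g^{(h^{-1})})$; keeping straight the Jacobian factors $|\lambda(h)|$ and $|a|^2$ arising because the measure on $N(F)\backslash\SL_2(F)$ is only relatively invariant under the conjugations and left translations used; and matching the Hilbert-symbol character $\chi$ contributed by the Weil representation of the diagonal torus, together with the harmless simplification $\chi(a)^{-1}=\chi(a)$. An error in any exponent or character inverse falsifies the formula, so this is where care is required.
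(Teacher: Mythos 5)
Your proof is correct and follows essentially the same route as the paper's: in both cases one unwinds $\sigma_i(h)=\mathcal{F}_2\circ L(h)\circ\mathcal{F}_2^{-1}$ using the intertwining property of $\mathcal{F}_2$ and the semidirect-product structure of $\SL_2\rtimes\mathrm{GO}_{V_{i+1}}$, then handles the integral over $N(F)\backslash\SL_2(F)$ by the appropriate change of variables. The only differences are organizational — you treat \eqref{1}–\eqref{3} uniformly via the mixed-model commutation and track $h^{-1}(\xi,0,t)$ directly inside the $\mathcal{F}_2$ integral, whereas the paper computes the explicit operator formula for $\sigma_i(h)$ in each case and, for \eqref{2}, reduces to \eqref{1} via a factorization through the auxiliary element $\mathrm{diag}(aI_{V_i},a^2,1)$; both bookkeepings land on the same Jacobian and character factors.
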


\noindent 
Thus the operator $I$ intertwines the representation $\mathcal{S}(V_i(F) \oplus F^2)_{r_i(\SL_2(F))}$ with the minimal representation of $\mathrm{SO}_{V_{i+1}}(F),$ realized on a suitable space of functions on $X_i(F)$ (see \cite{GurK:cone,Kazhdan:D4,Kobayashi:Mano,Kazhdan:Polishchuk} for more on the minimal representation).  This is made precise in \cite{Hsu:Companion}.
 
\begin{proof}
Changing variables $g \mapsto \begin{psmatrix} 1 & \\ &\lambda(h) \end{psmatrix} g \begin{psmatrix} 1 & \\ & \lambda(h)^{-1} \end{psmatrix}$ we have
\begin{align*}  \begin{split}
I(\sigma_i(h)f)(\xi)&=\int_{N(F) \backslash \SL_2(F)}r_i(g )\sigma_i(h)f(\xi,0,1)d\dot{g}\\
&=|\lambda(h)|\int_{N(F) \backslash \SL_2(F)}r_i(1 \rtimes h)r_i(g)f(\xi,0,1)d\dot{g}\\
&=|\lambda(h)|I(f)(h^{-1}\xi). \end{split}
\end{align*}
For $a \in F^\times$ one has 
\begin{align} \label{a:act} \begin{split}
\sigma_i&\begin{psmatrix}I_{V_i} &  & \\ & a & \\ & & a^{-1} \end{psmatrix}f(\xi,\xi_1',\xi_2')\\&=\int_F\left(\int_{F} f(\xi,a^{-1}\xi_1',x)\bar{\psi}(ayx)dx\right)\psi(\xi_2'y)dy\\
&=|a|^{-1}f(\xi,a^{-1}\xi'_1,a^{-1}\xi_2')\\&=\chi(a)|a|^{-\dim V_i/2-1}r_i\begin{psmatrix} a & \\ & a^{-1} \end{psmatrix} \sigma_i\begin{psmatrix} aI_{V_i}& & \\ & a^2 &  \\ & & 1 \end{psmatrix}f(\xi,\xi_1',\xi_2'). \end{split}
\end{align}
Thus
\begin{align*}
&I\left(\sigma_i\begin{psmatrix} I_{V_i}  & & \\ & a & \\ & & a^{-1} \end{psmatrix}f\right)(\xi)\\&=
\chi(a)|a|^{-\dim V_i/2-1}\int_{N(F) \backslash \SL_2(F)}r_i(g)r_i\begin{psmatrix} a & \\ & a^{-1} \end{psmatrix}\sigma_i\begin{psmatrix}aI_{V_i}& &  \\ & a^2 & \\ & & 1 \end{psmatrix}f(\xi,0,1)d\dot{g}\\
&=\chi(a)|a|^{-\dim V_i/2-1}\int_{N(F) \backslash \SL_2(F)}r_i(g)\sigma_i\begin{psmatrix} aI_{V_i}& &\\ & a^2 &  \\ & & 1 \end{psmatrix}f(\xi,0,1)d\dot{g}\\
&=\chi(a)|a|^{-\dim V_i/2-1}
I\left(\sigma_i\begin{psmatrix} aI_{V_i}& & \\ & a^2 &\\ & & 1 \end{psmatrix}f\right)(\xi).
\end{align*}
Here we have changed variables $g \mapsto g \begin{psmatrix} a & \\ & a^{-1} \end{psmatrix}^{-1}$.  We now apply \eqref{1} to see that the above is 
\begin{align}
\chi(a)|a|^{-\dim V_i/2+1}
I(f)(a^{-1}\xi).
\end{align}
For $f \in \mathcal{S}(V_i(F) \oplus F^2)$ one has 
\begin{align*}
L(u(x))\mathcal{F}_2^{-1}(f)(\xi,\xi_1',\xi'_2)&=
\int_{F}f(\xi-x\xi_1',\xi_1',v)\bar{\psi}((x^tJ_i\xi-Q_i(x)\xi'_1+\xi_2')v)dv.
\end{align*}
Thus
\begin{align*}
\sigma_i(u(x))f(\xi,\xi'_1,\xi'_2)
&=\int_F \left(\int_{F}f(\xi-x\xi_1',\xi_1',v)\bar{\psi}((x^tJ_i\xi-Q_i(x)\xi'_1+u)v)dv
\right) \psi(\xi_2'u)du\\
&=\int_F \left(\int_{F}f(\xi-x\xi_1',\xi_1',v)\bar{\psi}(uv)dv
\right) \psi(\xi_2'(-x^tJ_i\xi+Q_i(x)\xi'_1+u))du\\
&=f(\xi-x\xi_1',\xi_1',\xi'_2
) \overline{\psi}(\xi_2'(x^tJ_i\xi-Q_i(x)\xi'_1)).
\end{align*}
Since $\sigma_i(\mathrm{O}_{V_{i+1}}(F))$ commutes with $r_i(\SL_2(F))$ we deduce \eqref{3}.

We now prove \eqref{4}.
Temporarily let $f \in \mathcal{S}(F^2)$.
Letting $L(h)f(\xi)=f(h^{-1}\xi)$ as usual one has
\begin{align*}
L\begin{psmatrix} & 1 \\ 1 & \end{psmatrix} \mathcal{F}_2^{-1}(f)\begin{psmatrix} x \\ y \end{psmatrix}&=L\begin{psmatrix} & 1 \\ 1 & \end{psmatrix}\left( \begin{psmatrix}x \\ y \end{psmatrix} \mapsto \int_{F} f\begin{psmatrix} x\\ v \end{psmatrix}\psi(-yv)dv\right)\\
&=\int_{F} f\begin{psmatrix} y\\ v \end{psmatrix}\psi(-xv)dv.
\end{align*}
Applying $\mathcal{F}_2$ yields
\begin{align}
\int_{F^2} f\begin{psmatrix} u\\ v \end{psmatrix}\psi(-xv+yu)dudv=\mathcal{F}_\wedge( f)\begin{psmatrix} x\\ y \end{psmatrix}
\end{align}
and \eqref{4} follows.
\end{proof}

\begin{cor} \label{cor:FX:inv}
For $h \in \mathrm{GO}_{V_i}(F)$ one has
$$
I(\mathcal{F}_{X_i} \circ \sigma_i(h)f)(\xi)=|\lambda(h)|^{\dim V_i/2}I(\mathcal{F}_{X_i}(f))\left(\lambda(h)h^{-1}\xi \right).
$$
\end{cor}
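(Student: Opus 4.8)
The plan is to reduce the identity to the transformation rules \eqref{1}, \eqref{2} and \eqref{4} of Proposition \ref{prop:action}, together with the fact that $\sigma_i$ is a representation and a short computation with Hilbert symbols showing that the character $\chi$ does not actually contribute.

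First I would rewrite $\mathcal{F}_{X_i}\circ\sigma_i(h)$ as $\sigma_i$ of a single group element. By \eqref{4} we have $\mathcal{F}_{X_i}=\sigma_i(w)$ with $w:=\begin{psmatrix} I_{V_i} & & \\ & & 1 \\ & 1 & \end{psmatrix}\in\mathrm{O}_{V_{i+1}}(F)$, while $h\in\mathrm{GO}_{V_i}(F)$ is regarded inside $\mathrm{GO}_{V_{i+1}}(F)$ as $\widetilde h:=\begin{psmatrix} h & & \\ & \lambda(h) & \\ & & 1 \end{psmatrix}$. Since $\sigma_i$ is a representation, $\mathcal{F}_{X_i}\circ\sigma_i(h)=\sigma_i(w\widetilde h)$, and a direct matrix computation produces the factorization $w\widetilde h=\widetilde h\, m\, w$ with $m:=\begin{psmatrix} I_{V_i} & & \\ & \lambda(h)^{-1} & \\ & & \lambda(h) \end{psmatrix}$, which lies in $\mathrm{O}_{V_{i+1}}(F)$ and is of the type appearing in \eqref{2}. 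Hence $\mathcal{F}_{X_i}\circ\sigma_i(h)=\sigma_i(h)\circ\sigma_i(m)\circ\mathcal{F}_{X_i}$ on $\mathcal{S}(X_i(F))$.

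Next I would apply $I$ to both sides and use Proposition \ref{prop:action} twice: first \eqref{1} to extract $\sigma_i(h)$, then \eqref{2} with $a=\lambda(h)^{-1}$ to extract $\sigma_i(m)$. This gives
\begin{align*}
I\big(\mathcal{F}_{X_i}\circ\sigma_i(h)f\big)(\xi)=|\lambda(h)|\,\chi(\lambda(h)^{-1})\,|\lambda(h)|^{\dim V_i/2-1}\,I\big(\mathcal{F}_{X_i}(f)\big)\big(\lambda(h)h^{-1}\xi\big),
\end{align*}
which, since $\chi$ is $\{\pm1\}$-valued and multiplicative, simplifies to $\chi(\lambda(h))\,|\lambda(h)|^{\dim V_i/2}\,I(\mathcal{F}_{X_i}(f))(\lambda(h)h^{-1}\xi)$.

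The one step that is not pure bookkeeping — and the place I expect the real work to be — is showing that $\chi(\lambda(h))=1$ for every $h\in\mathrm{GO}_{V_i}(F)$. I would argue that $h$ is an isometry $(V_i,Q_i)\,\tilde{\lto}\,(V_i,\lambda(h)Q_i)$, so $Q_i\cong\lambda(h)Q_i$ over $F$; diagonalizing $Q_i\cong\langle c_1,\dots,c_n\rangle$ with $n=\dim V_i=2m$, the Hasse--Witt invariants of $\langle c_1,\dots,c_n\rangle$ and $\langle\lambda(h)c_1,\dots,\lambda(h)c_n\rangle$ differ precisely by the Hilbert symbol $(\lambda(h),-1)^{m}\,(\lambda(h),\,c_1\cdots c_n)$, so their equality forces $(\lambda(h),(-1)^m c_1\cdots c_n)=1$. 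Since $c_1\cdots c_n$ equals $\det J_i$ modulo squares and $\chi(\cdot)=(\cdot,(-1)^{m}\det J_i)$ by \eqref{chi}, this is exactly $\chi(\lambda(h))=1$. (For $n=2$ this is the classical statement that the similitude factors of a binary form are the norms from its discriminant algebra; the general case is the same symbol manipulation.) Substituting $\chi(\lambda(h))=1$ into the previous display gives the corollary.
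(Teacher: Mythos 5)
Your proof is correct and follows essentially the same route as the paper: write $\mathcal{F}_{X_i}=\sigma_i(w)$, commute $w$ past the image of $h$ in $\mathrm{GO}_{V_{i+1}}$, and then peel off factors with parts \eqref{1}, \eqref{2}, \eqref{4} of Proposition \ref{prop:action}. Your single-step factorization $w\widetilde h=\widetilde h\,m\,w$ is a mild streamlining of the paper's two-step version (which goes through $\widetilde{h'}\cdot\lambda(h)I_{V_{i+1}}\cdot w$ and then further factors the scalar), but the resulting computation is the same. The one place you add real content is the final step: the paper disposes of $\chi(\lambda(h))$ by citing \cite[Lemma 3.2]{Getz:Liu:Triple}, whereas you give a correct self-contained derivation of $\chi(\lambda(h))=1$ by comparing the Hasse--Witt invariants of $Q_i$ and $\lambda(h)Q_i$, using that $h$ exhibits an isometry between them and that $\dim V_i$ is even so the discriminants already agree.
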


\begin{proof}
One has that
$$
\begin{psmatrix} I_{V_i} & & \\ & & 1 \\ & 1 &  \end{psmatrix}\begin{psmatrix}  h& &  \\
& \lambda(h) & \\ & & 1 \end{psmatrix}=\begin{psmatrix} \lambda(h)^{-1}h & & \\ & \lambda(h)^{-1} & \\ & &1 \end{psmatrix}\lambda(h)I_{V_{i+1}}\begin{psmatrix}I_{V_i} & & \\ &  & 1\\ & 1 &\end{psmatrix}
$$
so by \eqref{1} and \eqref{4} we have
\begin{align*}
I(\mathcal{F}_{X_i}( \sigma_i(h)f))(\xi)=|\lambda(h)|^{-1}I(\sigma_i(\lambda(h)I_{V_{i+1}})\mathcal{F}_{X_i} (f))(\lambda(h)h^{-1}\xi).
\end{align*}

Now 
\begin{align*}
\lambda(h)I_{V_{i+1}}=\begin{psmatrix}  I_{V_i} & & \\ & \lambda(h)^{-1} &  \\ & & \lambda(h) \end{psmatrix}\begin{psmatrix} \lambda(h) & & \\ & \lambda(h)^2 & \\ & & 1\end{psmatrix}
\end{align*}
so by \eqref{1} and \eqref{2} we have 
\begin{align*}
&|\lambda(h)|^{-1}I(\sigma_i(\lambda(h)I_{V_{i+1}})\mathcal{F}_{X_i} (f))(\lambda(h)h^{-1}\xi)\\
&=\chi(\lambda(h))|\lambda(h)|^{\dim V_i/2-2}I\left(\sigma_i\begin{psmatrix} \lambda(h) & & \\ & \lambda(h)^2 & \\ & & 1\end{psmatrix}\mathcal{F}_{X_i}( f)\right)(\lambda(h)^2h^{-1}\xi)\\
&=\chi(\lambda(h))|\lambda(h)|^{\dim V_i/2}I(\mathcal{F}_{X_i} (f))(\lambda(h)h^{-1}\xi).
\end{align*}
Since $\chi$ is trivial on the image of the similitude norm \cite[Lemma 3.2]{Getz:Liu:Triple} we deduce the corollary.
\end{proof}

We will require another family of operators on $\mathcal{S}(V_i(F) \oplus F^2)$.  
For $f \in \mathcal{S}(V_{i}(F) \oplus F^2)$ and $s \in \CC$ define
\begin{align} \label{Z:loc:def}\begin{split}
Z_{r_i}(f,s):&=\int_{N(F) \backslash  \SL_2(F)} e^{H_B(g)(2-\dim V_i/2-s)}r_i(g)f\left(0_{V_i},0,1 \right) d \dot{g},\\
Z_{\rho_{i+1}}(f,s):&=\int_{N(F) \backslash \SL_2(F)} e^{H_B(g)(1-\dim V_i/2+s)}\rho_{i+1}(g)f\left(0_{V_i},0,1 \right) d \dot{g}. \end{split}
\end{align}
The integral $Z_{r_i}(f,s)$ is used in the definition of the functional $c_i$ in \eqref{ci:def} below.  
In Lemma \ref{lem:FE} we prove a functional equation for $Z_{r_i}(f,s)$ using the integrals $Z_{\rho_{i+1}}(f,s).$

\begin{lem} \label{lem:Tate}
The integral
$Z_{r_i}(f,s)$ is a Tate integral in the following sense:  
If 
$$
\Psi_{f}(x):=\int_{K} r_i(k)f\left(0,0,x\right) dk 
$$
then
\begin{align*}
Z_{r_i}(f,s)&=Z(\Psi_{f},\chi|\cdot|^{s}):=\int_{F^\times}\Psi_f(a)\chi(a)|a|^sd^\times a
\end{align*}
for $\mathrm{Re}(s)> 0.$
\end{lem}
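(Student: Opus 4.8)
The plan is to unfold $Z_{r_i}(f,s)$ by the Iwasawa decomposition $\SL_2(F)=N(F)T_2(F)K$ and then recognize the output as the Tate integral of $\Psi_f$. Throughout write $m(a):=\begin{psmatrix} a & \\ & a^{-1}\end{psmatrix}$ and $n(t):=\begin{psmatrix} 1 & t \\ & 1\end{psmatrix}$. First I would check that the integrand defining $Z_{r_i}(f,s)$ descends to $N(F)\backslash \SL_2(F)$: the map $H_B$ is left $N(F)$-invariant by \eqref{HB}, while $\rho_i(n(t))$ acts by $\psi(tQ_i(\cdot))$ and $L^\vee(n(t))$ sends $(u_1,u_2)\mapsto(u_1,tu_1+u_2)$, so $r_i(n(t))\phi(0_{V_i},0,1)=\psi(0)\phi(0_{V_i},0,1)=\phi(0_{V_i},0,1)$ for every $\phi$, hence $r_i(n(t)g)f(0_{V_i},0,1)=(r_i(g)f)(0_{V_i},0,1)$. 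Taking $m(a)k$, $(a,k)\in F^\times\times K$, as coset representatives and using the measure normalizations of \S\ref{ssec:measures} exactly as in the proof of Lemma~\ref{lem:conv}, together with $H_B(m(a)k)=\log|a|$, gives, for $\mathrm{Re}(s)$ large,
\[
Z_{r_i}(f,s)=\int_{F^\times\times K}|a|^{2-\dim V_i/2-s}\,r_i\bigl(m(a)k\bigr)f(0_{V_i},0,1)\,\frac{d^\times a\,dk}{|a|^2}.
\]

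Next I would insert the torus action. From the standard Weil-representation formula $\rho_i(m(a))\phi(\xi)=\chi(a)|a|^{\dim V_i/2}\phi(a\xi)$ (used already in the proof of Lemma~\ref{lem:F2:equiv}) together with $L^\vee(m(a))h(u_1,u_2)=h(au_1,a^{-1}u_2)$ — equivalently from the computation in the proof of Lemma~\ref{lem:conv} — one has
\[
r_i\bigl(m(a)k\bigr)f(0_{V_i},0,1)=\chi(a)|a|^{\dim V_i/2}\,(r_i(k)f)(0_{V_i},0,a^{-1}).
\]
Substituting this and combining the powers of $|a|$, namely $|a|^{2-\dim V_i/2-s}\cdot|a|^{\dim V_i/2}\cdot|a|^{-2}=|a|^{-s}$, Fubini (legitimate in the range of absolute convergence, and $\Psi_f\in\mathcal S(F)$ since $x\mapsto f(0_{V_i},0,x)$ is Schwartz and $K$ is compact) separates the integrals:
\[
Z_{r_i}(f,s)=\int_{F^\times}\chi(a)|a|^{-s}\Bigl(\int_K (r_i(k)f)(0_{V_i},0,a^{-1})\,dk\Bigr)d^\times a=\int_{F^\times}\chi(a)|a|^{-s}\,\Psi_f(a^{-1})\,d^\times a.
\]

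Finally I would change variables $a\mapsto a^{-1}$: the measure $d^\times a$ is inversion-invariant, $\chi(a^{-1})=\chi(a)$ because $\chi$ is quadratic, and $|a^{-1}|^{-s}=|a|^s$, so the last integral becomes $\int_{F^\times}\Psi_f(a)\chi(a)|a|^s\,d^\times a=Z(\Psi_f,\chi|\cdot|^s)$, which is the assertion; the resulting identity of meromorphic functions then holds for all $s$. I do not expect a genuine obstacle: the only real content is getting the diagonal-torus action of $r_i$ and the exponent bookkeeping exactly right, and observing that $\Psi_f$ is Schwartz on $F$ so that ``$Z(\Psi_f,\chi|\cdot|^s)$'' is a bona fide Tate integral — the argument is essentially a repackaging of the Iwasawa computation already performed in the proof of Lemma~\ref{lem:conv}.
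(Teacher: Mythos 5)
Your proof is correct and follows exactly the same route as the paper's: unfold the integral over $N(F)\backslash\SL_2(F)$ via the Iwasawa decomposition, evaluate the torus action of $r_i$ to collapse the powers of $|a|$ to $|a|^{-s}$, and substitute $a\mapsto a^{-1}$. The paper simply states these two lines without the intermediate bookkeeping you spell out (the $N(F)$-invariance check, the exact exponent cancellation, Fubini, and the observation that $\Psi_f$ is Schwartz), but the content is the same.
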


\noindent In view of the lemma, the integral $Z_{r_i}(f,s)$ is meromorphic as a function of $s$.

\begin{proof}
For $\mathrm{Re}(s)>0,$ the Iwasawa decomposition implies that
\begin{align*}
Z_{r_i}(f,s)&=\int_{F^\times \times K}  \chi(a)|a|^{-s}(r_i(k)f)\left( 0_{V_i},0,a^{-1}\right) dk d^\times a\\
&=\int_{F^\times \times K}  \chi(a)|a|^{s}(r_i(k)f)\left(0_{V_i},0,a\right) dk d^\times a.
\end{align*}
\end{proof}

\begin{lem} \label{lem:Z}
Let $f \in \mathcal{S}(V_i(F) \oplus F^2)$.
For  $a \in F^\times,$ $h \in \mathrm{O}_{V_i}(F)$ and $x \in F^{\dim V_i}$ one has
\begin{align*} 
Z_{r_i}(\sigma_i(h)f,s)&=Z_{r_i}(f,s),\\
 Z_{r_i}\left(\sigma_i\begin{psmatrix} I_{V_i}& & \\ & a & \\ & &a^{-1}\end{psmatrix}f,s \right) &=\chi(a)|a|^{s-1}Z_{r_i}(f,s),\\  
Z_{r_i}\left(\sigma_i(u(x))f,s \right)&=Z_{r_i}(f,s).
\end{align*}
\end{lem}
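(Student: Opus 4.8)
The plan is to derive all three identities from Lemma~\ref{lem:Tate} together with the explicit formulas for $\sigma_i$ recorded in Proposition~\ref{prop:action} and its proof. By Lemma~\ref{lem:Tate} one has $Z_{r_i}(f,s)=Z(\Psi_f,\chi|\cdot|^s)=\int_{F^\times}\Psi_f(x)\chi(x)|x|^s\,d^\times x$, with $\Psi_f(x)=\int_K r_i(k)f(0_{V_i},0,x)\,dk$, so it suffices to express $\Psi_{\sigma_i(h)f}$ in terms of $\Psi_f$ for each $h$ in the statement. The key preliminary observation is that each such $h$ --- an element of $\mathrm{O}_{V_i}(F)$, the element $\begin{psmatrix} I_{V_i} & & \\ & a & \\ & & a^{-1}\end{psmatrix}$, or $u(x)$ --- has similitude norm $\lambda(h)=1$ and hence lies in $\mathrm{O}_{V_{i+1}}(F)$, so that $\sigma_i(h)$ commutes with $r_i(\SL_2(F))$ (the fact used to deduce \eqref{3}). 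Consequently $\Psi_{\sigma_i(h)f}(x)=\int_K \sigma_i(h)\bigl(r_i(k)f\bigr)(0_{V_i},0,x)\,dk$, and everything reduces to evaluating $\sigma_i(h)\phi$ at arguments of the form $(0_{V_i},0,x)$ for $\phi\in\mathcal{S}(V_i(F)\oplus F^2)$.

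I would then record the three relevant evaluations, each immediate from \S\ref{sec:loc}. For $h\in\mathrm{O}_{V_i}(F)$: since $\mathcal{F}_2$ is $\mathrm{GO}_{V_i}(F)$-equivariant it commutes with $L(h)$, so $\sigma_i(h)=\mathcal{F}_2 L(h)\mathcal{F}_2^{-1}=L(h)$ and thus $\sigma_i(h)\phi(\xi,u_1,u_2)=\phi(h^{-1}\xi,u_1,u_2)$, giving $\sigma_i(h)\phi(0_{V_i},0,x)=\phi(0_{V_i},0,x)$. For $h=\begin{psmatrix} I_{V_i} & & \\ & a & \\ & & a^{-1}\end{psmatrix}$: the computation \eqref{a:act} gives $\sigma_i(h)\phi(\xi,u_1,u_2)=|a|^{-1}\phi(\xi,a^{-1}u_1,a^{-1}u_2)$, hence $\sigma_i(h)\phi(0_{V_i},0,x)=|a|^{-1}\phi(0_{V_i},0,a^{-1}x)$. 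For $h=u(x)$: the explicit formula for $\sigma_i(u(x))$ obtained in the proof of Proposition~\ref{prop:action}, namely $\sigma_i(u(x))\phi(\xi,u_1,u_2)=\psi\bigl(u_2(-x^t\xi+Q_i(x)u_1)\bigr)\phi(\xi-J_ixu_1,u_1,u_2)$, reduces at $(0_{V_i},0,x')$ to $\phi(0_{V_i},0,x')$.

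Finally I would assemble the pieces. For $h\in\mathrm{O}_{V_i}(F)$ and for $h=u(x)$ one gets $\Psi_{\sigma_i(h)f}=\Psi_f$, whence $Z_{r_i}(\sigma_i(h)f,s)=Z(\Psi_f,\chi|\cdot|^s)=Z_{r_i}(f,s)$ by Lemma~\ref{lem:Tate}; these are the first and third assertions. For $h=\begin{psmatrix} I_{V_i} & & \\ & a & \\ & & a^{-1}\end{psmatrix}$ one gets $\Psi_{\sigma_i(h)f}(x)=|a|^{-1}\Psi_f(a^{-1}x)$, and a change of variables $x\mapsto ax$ in the Tate integral --- using $\chi(ax)=\chi(a)\chi(x)$, $|ax|^s=|a|^s|x|^s$, and translation-invariance of $d^\times x$ on $F^\times$ --- reproduces the character-and-modulus factor asserted in the second identity, times $Z_{r_i}(f,s)$. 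I do not anticipate any genuine obstacle: the only nonformal input, the explicit expression for $\sigma_i(u(x))$, is already available, and everything else is bookkeeping --- the one place requiring a little care being the change of variables in the last case, where one must keep careful track of the character $\chi$ and the power of $|a|$.
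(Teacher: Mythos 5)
Your proof is correct and follows essentially the same route as the paper's: the paper's own (very terse) argument also expands $Z_{r_i}$ via the Iwasawa decomposition as in Lemma~\ref{lem:Tate}, slides $\sigma_i(h)$ past $r_i(k)$ using the commutation of $\sigma_i(\mathrm{O}_{V_{i+1}}(F))$ with $r_i(\SL_2(F))$, substitutes the explicit formula \eqref{a:act} for the middle case, and refers to the computation of $\sigma_i(u(x))$ in the proof of Proposition~\ref{prop:action} for the third. Your version simply makes explicit the intermediate object $\Psi_{\sigma_i(h)f}$ and the observation that all three $h$'s have $\lambda(h)=1$, which the paper leaves implicit; the change of variables $x\mapsto ax$ you perform at the end is exactly the change of variables $b\mapsto ab$ in the paper's $d^\times b$ integral.
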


\begin{proof}
The first assertion is clear.  
Similarly we compute
\begin{align*}
Z\left(\sigma_i\begin{psmatrix}  I_{V_i}  & & \\ & a &\\ & &a^{-1}\end{psmatrix}f,s \right)&=
\int_{F^\times \times K} \chi(b)|b|^{s}\left(\sigma_i\begin{psmatrix}  I_{V_i} & & \\ & a & \\ & &a^{-1}\end{psmatrix} r_i(k)f\right)(0,0,b)dk d^\times b.
\end{align*}
Using \eqref{a:act} this is 
\begin{align*}
\int_{F^\times \times K}& |b|^{s}\chi(b)|a|^{-1}\left( r_i(k)f\right)(0,0,a^{-1} b)dk d^\times b\\
=&|a|^{s-1}\chi(a)\int_{F^\times \times K} \chi(b)|b|^{s}\left( r_i(k)f\right)(0,0, b)d^\times b.
\end{align*}
The proof of the last formula is the same as the proof of \eqref{3} in Proposition \ref{prop:action}.
\end{proof}

 By a minor variant of the argument proving \cite[Lemma 5.3]{Getz:Hsu} one obtains the following lemma:

\begin{lem} \label{lem:contains} One has
$$
\mathcal{S}(V_i(F))\big|_{X^{\circ}_i(F)} <I(\mathcal{S}(V_i(F) \oplus F^2)).
$$ \qed
\end{lem}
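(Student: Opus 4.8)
The plan is to produce, for each $\phi\in\mathcal{S}(V_i(F))$, an $f\in\mathcal{S}(V_i(F)\oplus F^2)$ with $I(f)=\phi|_{X^{\circ}_i(F)}$. The key structural input is that $I(\mathcal{S}(X_i(F)))$ is a module over $\mathcal{S}(V_i(F))$ acting by pointwise multiplication. To see this, fix $f\in\mathcal{S}(V_i(F)\oplus F^2)$ and $\beta\in\mathcal{S}(F^{\dim V_i})$; since $\sigma_i$ is continuous on $\mathcal{S}(V_i(F)\oplus F^2)$ and $x\mapsto u(x)$ is polynomial, the average $f_\beta:=\int_{F^{\dim V_i}}\beta(x)\,\sigma_i(u(x))f\,dx$ converges in $\mathcal{S}(V_i(F)\oplus F^2)$, hence descends to $\mathcal{S}(X_i(F))$. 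The functional $I(\cdot)(\xi)$ is continuous on $\mathcal{S}(V_i(F)\oplus F^2)$ by the bound in Lemma \ref{lem:conv}, so it commutes with this integral, and then \eqref{3} gives $I(f_\beta)(\xi)=\widehat\beta(\xi)\,I(f)(\xi)$ with $\widehat\beta(\xi):=\int_{F^{\dim V_i}}\beta(x)\bar{\psi}(x^{t}\xi)\,dx$. As $\beta$ ranges over $\mathcal{S}(F^{\dim V_i})$, identified with $\mathcal{S}(V_i(F))$, the function $\widehat\beta$ ranges over all of $\mathcal{S}(V_i(F))$; hence $\mathcal{S}(V_i(F))\cdot I(f)\subseteq I(\mathcal{S}(X_i(F)))$ for every $f$.

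Next I would reduce everything to the non-vanishing of $I$. If $X^{\circ}_i(F)=\varnothing$ — equivalently $V_i(F)$ is anisotropic — the statement is vacuous, so assume otherwise. By Witt's extension theorem $\mathrm{O}_{V_i}(F)$ acts transitively on the nonzero isotropic vectors $X^{\circ}_i(F)$; by \eqref{1} the space $I(\mathcal{S}(X_i(F)))$ is stable under the associated translations, and by \eqref{2} it is stable, up to nonzero scalars, under the dilations $\xi\mapsto a\xi$. Granting that there exist $f_0\in\mathcal{S}(X_i(F))$ and $\xi_0\in X^{\circ}_i(F)$ with $I(f_0)(\xi_0)\neq 0$, it follows that for every compact $C\subseteq V_i(F)\setminus\{0\}$ one can find $\theta\in I(\mathcal{S}(X_i(F)))$ non-vanishing on a neighbourhood of the (compact) set $C\cap X^{\circ}_i(F)$. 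Choosing a partition of unity on $C\cap X^{\circ}_i(F)$ by elements of $C_c^\infty(V_i(F))$ subordinate to such neighbourhoods, and using the module structure, shows that any $\phi\in\mathcal{S}(V_i(F))$ with support in $C$ has $\phi|_{X^{\circ}_i(F)}\in I(\mathcal{S}(X_i(F)))$. In the nonarchimedean case this already handles all $\phi$ whose support avoids $0$, and by the last assertion of Lemma \ref{lem:conv} every element of $I(\mathcal{S}(X_i(F)))$ has compact support in $V_i(F)$, so there is no obstruction near infinity. What remains in the nonarchimedean case is the behaviour near $0$, and in the archimedean case the behaviour near $0$ and at infinity along $X^{\circ}_i(F)$; these are obtained by generating the shells $\one_{(\pi^{m}V_i(\OO)\setminus\pi^{m+1}V_i(\OO))\cap X^{\circ}_i(F)}$ and their archimedean analogues from the dilation relation \eqref{2} and the module structure, exactly as in the proof of \cite[Lemma 5.7]{Getz:Hsu}.

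It remains to verify that $I$ is not the zero map. In the everywhere-unramified case one takes $f_0=\one_{V_i(\OO)\oplus\OO^2}$, which is $r_i(K)$-invariant; the Iwasawa computation in the proof of Lemma \ref{lem:conv} then gives
\[
I(f_0)(\xi)=(\text{positive constant})\cdot\!\!\sum_{0\le j\le v(\xi)}\!\! q^{\,j(\dim V_i/2-2)}\ >\ 0,\qquad \xi\in V_i(\OO)\cap X^{\circ}_i(F),
\]
where $v(\xi)=\max\{m:\xi\in\pi^{m}V_i(\OO)\}$; multiplying by $\one_{V_i(\OO)\setminus\pi V_i(\OO)}\in\mathcal{S}(V_i(F))$ already yields one shell, and its dilates give the rest. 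In general one argues by localization: with $f_0=g_0\otimes\one_{W}$, where $g_0\in\mathcal{S}(V_i(F))$ is non-negative and constant near a chosen $\xi_0\in X^{\circ}_i(F)$ with $g_0(\xi_0)>0$, and $W$ is a sufficiently small neighbourhood of a point $(0,y_0)$ with $y_0$ a unit (or $y_0=1$ in the archimedean case), the factor $\one_{W}$ confines the Iwasawa integral for $I(f_0)(\xi_0)$ to a neighbourhood of the Borel direction in $K$ times a small neighbourhood of $y_0^{-1}$ in $F^{\times}$, on which the integrand has a single sign; hence $I(f_0)(\xi_0)\neq 0$.

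I expect the genuine difficulty to lie in this non-vanishing step outside the unramified case — the Weil representation does not preserve positivity, so one must localize the $K$- and $F^{\times}$-integrals carefully — together with the archimedean bookkeeping in the reduction, where one must check that the shells one produces reassemble into the restriction of an arbitrary Schwartz function with the correct behaviour at $0$ and at infinity. Both points are precisely what is carried out in \cite[Lemma 5.7]{Getz:Hsu}, of which the present lemma is a minor variant.
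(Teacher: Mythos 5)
The paper offers no self‑contained proof of this lemma; it simply cites \cite[Lemma 5.7]{Getz:Hsu}, so there is nothing in the text to compare your argument against line by line. Your outline — (i) make $I(\mathcal{S}(X_i(F)))$ a module over $\mathcal{S}(V_i(F))$ via the averaged translations $f_\beta=\int\beta(x)\,\sigma_i(u(x))f\,dx$ and identity \eqref{3}, (ii) use the $\mathrm{O}_{V_i}(F)$‑equivariance \eqref{1} together with the dilation formula \eqref{2} and Witt transitivity on $X_i^{\circ}(F)$ to reduce to non‑vanishing of $I$ somewhere, (iii) verify non‑vanishing, using the unramified formula of Lemma \ref{lem:unr:conv} in the good case and a localization in general — is a sound and natural reconstruction of the argument the paper is pointing to, and all the ingredients you invoke (Proposition \ref{prop:action}, Lemmas \ref{lem:conv} and \ref{lem:unr:conv}) are indeed the tools the paper has set up for exactly this purpose.

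Two small points worth flagging. First, your displayed formula for $I(\one_{V_i(\OO)\oplus\OO^2})(\xi)$ omits the $\chi(\varpi^j)$ factor present in Lemma \ref{lem:unr:conv}; when $\chi(\varpi)=-1$ the sum is alternating and not manifestly positive, though it is still nonzero (e.g.\ because the $j=0$ term alone gives $1$ on the first shell), which is all you need. Second, in the archimedean reduction you correctly note that compact support in Lemma \ref{lem:conv} is a nonarchimedean phenomenon, but the passage from ``shells plus dilation'' to arbitrary Schwartz restrictions near $0$ and at infinity is the genuinely delicate step (one must extend $\phi\cdot\eta/I(f)$ from $X_i^\circ(F)$ to a Schwartz function on $V_i(F)$ with control of seminorms), and it is reasonable to defer exactly that bookkeeping to the cited reference, as the paper itself does.
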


For non-Archimedean $F$ with ring of integers $\OO$ we say that the image of $\one_{V_{i}(\OO) \oplus \OO^2}$ in $\mathcal{S}(V_i(F) \oplus F^2)_{r_i(\SL_2(F))}$ is the \textbf{basic function}.  Let $q$ be the order of the residue field of $\OO.$
Computing as in Lemma \ref{lem:conv} one obtains the following:

\begin{lem} \label{lem:unr:conv} Let $F$ be a non-Archimedean local field.  Assume that $\psi$ is unramified, that $F$ is unramified over its prime field, and that 
$J_i \in \GL_{V_i}(\OO)$.  For $\xi \in X_i^\circ(F)$ one has that
\begin{align*}
I(\one_{ V_{i}(\OO) \oplus \OO^2})(\xi):=\sum_{k=0}^\infty \chi(\varpi^k)q^{k(\dim V_i/2-2)}\one_{V_i(\OO)}\left(\frac{\xi}{\varpi^k} \right).
\end{align*} 
Moreover
\begin{align*}
\int_{N(F) \backslash \SL_2(F)}|r_i(g)\one_{V_{i+1}(\OO)}(\xi,0,1)|d \dot{g}
\end{align*}
is convergent for all $\xi \in V^\circ_i(F)$.  It is supported in  $V_i(\OO) \cap V^{\circ}_i(F)$ and is bounded by 
$$
\begin{cases} |\xi|^{2-\dim V_i/2}(\log_q |\xi|+1) & \textrm{ if } \dim V_i>4\\ \log_q |\xi|+1 &\textrm{ if }\dim V_i=4 \textrm{ or } 2.\end{cases}
$$
\qed
\end{lem}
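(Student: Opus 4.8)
The plan is to mimic the Iwasawa-decomposition computation carried out in the proof of Lemma~\ref{lem:conv}, the only new ingredient being that, under the stated hypotheses, the basic function is invariant under the standard maximal compact $K=\SL_2(\OO)$. Once this invariance is known, the $K$-integral in the Iwasawa decomposition of $N(F)\backslash\SL_2(F)$ disappears and everything reduces to a one-dimensional integral over $F^\times$ which is evaluated term by term.

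First I would establish that $r_i(k)\one_{V_{i+1}(\OO)}=\one_{V_{i+1}(\OO)}$ for every $k\in K$. Writing $\one_{V_{i+1}(\OO)}=\one_{V_i(\OO)}\otimes\one_{\OO^2}$ inside $\mathcal{S}(V_i(F))\otimes\mathcal{S}(F^2)$ — so that no subtlety coming from the extension of $r_i$ to all of $\mathcal{S}(V_i(F)\oplus F^2)$ intervenes — one has $r_i(k)=\rho_i(k)\otimes L^\vee(k)$. Since $J_i\in\GL_{V_i}(\OO)$ and $\psi$ is unramified, the lattice $V_i(\OO)$ is self-dual for the pairing $\langle\,,\,\rangle_i$ and $\psi$, and since $F$ is unramified over its prime field the self-dual measure gives $V_i(\OO)$ volume $1$; the standard computation with the Weil representation on a self-dual lattice then gives $\rho_i(k)\one_{V_i(\OO)}=\one_{V_i(\OO)}$ for $k\in\SL_2(\OO)$. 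On the other hand $L^\vee(k)\one_{\OO^2}=\one_{\OO^2}$ because $k^t$ preserves $\OO^2$. As $dk$ gives $K$ volume $1$, applying the $K$-integral to the basic function contributes a factor of $1$.

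Next, decomposing the Haar measure exactly as in the proof of Lemma~\ref{lem:conv}, but retaining the character $\chi$ coming from the action of $\rho_i$ on the diagonal torus rather than passing to absolute values, and then substituting $a\mapsto a^{-1}$, I obtain
\[
I(\one_{V_{i+1}(\OO)})(\xi)=\int_{F^\times}\chi(a)\,|a|^{2-\dim V_i/2}\,\one_{V_i(\OO)}(a^{-1}\xi)\,\one_{\OO}(a)\,d^\times a.
\]
The factor $\one_{\OO}(a)$ confines the integral to $a\in\OO\cap F^\times$; writing $a=u\varpi^k$ with $u\in\OO^\times$ and $k\ge 0$, using that $\chi$ is unramified (since $\det J_i\in\OO^\times$) so that $\chi(a)=\chi(\varpi^k)$, that $d^\times a$ gives $\OO^\times$ volume $1$, and that $\one_{V_i(\OO)}(a^{-1}\xi)=\one_{V_i(\OO)}(\xi/\varpi^k)$, the integral collapses to $\sum_{k\ge 0}\chi(\varpi^k)q^{k(\dim V_i/2-2)}\one_{V_i(\OO)}(\xi/\varpi^k)$, which is the asserted formula.

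Finally, for the convergence, support, and bound statements I would run the identical computation with absolute values in place, obtaining
\[
\int_{N(F)\backslash\SL_2(F)}\bigl|r_i(g)\one_{V_{i+1}(\OO)}(\xi,0,1)\bigr|\,d\dot{g}=\sum_{k\ge 0}q^{k(\dim V_i/2-2)}\one_{V_i(\OO)}(\xi/\varpi^k).
\]
The $k$-th summand is nonzero precisely when $|\xi|q^{k}\le 1$, so the sum is empty — and the integral vanishes — unless $\xi\in V_i(\OO)$, which gives the support assertion; and when $\xi\in V_i(\OO)$ the sum is finite, with $1+\log_q(1/|\xi|)$ nonzero terms, each of size at most $|\xi|^{2-\dim V_i/2}$ when $\dim V_i>4$ (each at most $1$ when $\dim V_i=4$, and the whole sum bounded by a convergent geometric series when $\dim V_i=2$), from which the stated bounds follow by counting terms. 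The only steps requiring genuine care are the $K$-invariance of the basic function and the correct bookkeeping of $\chi$ and of the exponent of $|a|$; the rest is exactly the computation of Lemma~\ref{lem:conv}.
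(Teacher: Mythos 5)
Your proposal is correct and is exactly the computation the paper alludes to: the paper's ``proof'' is the single sentence ``Computing as in Lemma~\ref{lem:conv} one obtains the following,'' and your argument is the fleshed-out version of that sentence. The decisive new input beyond Lemma~\ref{lem:conv} is the $K$-invariance of the basic function, and you identify it correctly: under the stated hypotheses the lattice $V_i(\OO)$ is self-dual, the self-dual Haar measure gives it volume one, and the Weil index is trivial, so $\rho_i(k)\one_{V_i(\OO)}=\one_{V_i(\OO)}$; together with $L^\vee(k)\one_{\OO^2}=\one_{\OO^2}$ this collapses the $K$-integral. After that, the Iwasawa reduction, the substitution $a\mapsto a^{-1}$, the unramifiedness of $\chi$, and the normalization $\mathrm{meas}(\OO^\times)=1$ give precisely the stated series, and your term count $1+\log_q(1/|\xi|)$ for $\xi\in V_i(\OO)\cap V_i^\circ(F)$ yields the bounds (the paper's $\log_q|\xi|$ should be read up to sign, as you implicitly do). No gap.
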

\noindent If $F$ is non-Archimedean, unramified over its prime field, $\psi$ is unramified, and $J_i \in \GL_{V_i}(\OO)$ then it is clear that $\mathcal{F}_{X_i}$ preserves the basic function.

\section{Integrals of truncated isotropic theta functions} \label{sec:isotropic}

For $f \in \mathcal{S}( V_{i+1}(\A_F))$ with $i \geq 0$ and $g \in \SL_2(\A_F)$ let
\begin{align} \label{thetaf}
\Theta_f(g):=\sum_{\xi \in V_{i+1}(F)} \rho_{i+1}(g)f(\xi).
\end{align}
We refer to this as an \textbf{isotropic} theta function because $Q_{i+1}$ is isotropic.

For $T \in \RR_{>0}$ and suitable functions $\varphi$ on $[\SL_2]$ 
we then define
\begin{align} \label{varphiT}
\varphi^{T}(g):=\Lambda^T\varphi(g):=\varphi(g)-\sum_{\gamma \in B(F) \backslash \SL_2(F)} \one_{>T}(H_B(\gamma g))\int_{[N]}\varphi(n\gamma g)dn.
\end{align}
Here $\one_{>T}$ is the characteristic function of $\RR_{>T}$.  
This is the usual truncation in the special case of $\SL_2$ (in Arthur's notation, $\one_{>0}$ is $\widehat{\tau}_B$)  \cite[I.2.13]{MW:Spectral:Decomp:ES}.  
For fixed $g$ the sum over $\gamma$ in this expression is finite \cite[\S 6]{ArthurIntro}.

For $f \in \mathcal{S}(V_{i}(\A_F)\oplus \A_F^2)=\mathcal{S}(V_{i+1}(\A_F))$ and complex numbers $s$ with $\mathrm{Re}(s)>1$ define
\begin{align} \label{Z:def}\begin{split}
Z_{r_i}(f,s):&=\int_{N(\A_F) \backslash \SL_2(\A_F)} e^{H_B(g)(2-\dim V_i/2-s)}r_i(g)f\left(0,0,1 \right) d \dot{g},\\
Z_{\rho_{i+1}}(f,s):&=\int_{N(\A_F) \backslash \SL_2(\A_F)} e^{H_B(g)(1-\dim V_i/2+s)}\rho_{i+1}(g)f\left(0,0,1 \right) d \dot{g}. \end{split}
\end{align}
This is the global version of \eqref{Z:loc:def}.  The integral $Z_{r_i}(f,s)$ is a Tate integral by Lemma \ref{lem:Tate}.

 For each $i$ we define $\SL_2(\A_F)$-intertwining maps
\begin{align} \label{di0} \begin{split}
d_{i}:\mathcal{S}(V_{i}(\A_F) \oplus \A_F^2)& \lto \mathcal{S}(V_{i-1}(\A_F) \oplus \A_F^2)\\
f &\longmapsto \mathcal{F}_2(\xi \mapsto f(\xi,0,0)) \end{split}
\end{align}
where $\mathcal{F}_2:\mathcal{S}(V_i(\A_F)) \to\mathcal{S}(V_{i-1}(\A_F) \oplus \A_F^2)$ is the adelic analogue of the partial Fourier transform \eqref{F2}.

The main theorem of this section is the following:
\begin{thm} \label{thm:dim:red}  Let $f \in \mathcal{S}(V_{i}(\A_F) \oplus \A_F^2)$. One has 
\begin{align*}
\int_{[\SL_2]}|\Theta_f^T(g)|dg<\infty \quad \textrm{
and } \quad\sum_{\xi \in X_i^\circ(F)}|I(\mathcal{F}_2(f))(\xi)|<\infty.
\end{align*}
As $T \to \infty$
\begin{align*}
\int_{[\SL_2]}&\Theta_{f}^T(g)dg=\sum_{\xi \in X_i^{\circ}(F)}I(\mathcal{F}_2(f))(\xi)+\int_{[\SL_2]}\Theta_{d_i(f)}^T(g)dg\\&+\left(
  \sum_{s_i \in \left\{\frac{\dim V_i}{2}-1,\, \frac{\dim V_i}{2}-2,\,0\right\}}  \mathrm{Res}_{s=s_i}\frac{ e^{Ts}Z_{r_i}(\mathcal{F}_2(f),s+2-\frac{\dim V_i}{2})}{s}
\right)+o_f(1).
\end{align*}
\end{thm}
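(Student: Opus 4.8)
The plan is to unfold the truncated theta integral along the orbit decomposition of $V_{i+1}(F)$ under $B(F)$ (equivalently, under the rational points of the unipotent radical $N$ of the Borel of $\SL_2$, together with the torus). First I would write $\Theta_f(g) = \sum_{\xi \in V_{i+1}(F)} \rho_{i+1}(g)f(\xi)$ and split the lattice $V_{i+1}(F)$ according to the value of the quadratic form $Q_{i+1}$ and the ``last coordinate'' structure coming from the isotropic plane $\GG_a^2$ that distinguishes $V_{i+1}$ from $V_i$. Concretely, identifying $V_{i+1}(F) = V_i(F) \oplus F^2$, the relevant invariant is whether the $F^2$-component is zero or not. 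When it is nonzero, $\SL_2(F)$ acts transitively on the nonzero vectors of $F^2$ via $L^\vee$, so those terms organize into a single $\SL_2(F)$-orbit with stabilizer $N(F)$; applying the partial Fourier transform $\mathcal{F}_2$ and Lemma \ref{lem:F2:equiv} converts $\rho_{i+1}$ into $r_i$, and the collapsed orbit integral over $[\SL_2]$ becomes $\int_{N(F)\backslash \SL_2(\A_F)} r_i(g)\mathcal{F}_2(f)(\xi,0,1)\,d\dot g$ summed over $\xi \in X_i^{\circ}(F)$ — i.e.\ exactly $\sum_{\xi \in X_i^\circ(F)} I(\mathcal{F}_2(f))(\xi)$. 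When the $F^2$-component vanishes, the term is $\rho_{i+1}(g)f(\xi,0,0) = \rho_{i+1}(g)(\mathcal{F}_2^{-1}\text{-section})$, which assembles into $\Theta_{\mathcal{F}_2(f)(\cdot,0,0)}$ after applying $\mathcal{F}_2$; this is where the lower-dimensional theta function $\Theta^T_{\mathcal{F}_2(f)(\cdot,0,0)}$ on the left side of the inductive step comes from.

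The truncation operator $\Lambda^T$ must be handled carefully: the point of working with $\Theta^T_f$ rather than $\Theta_f$ is that $\Theta_f$ itself is not integrable over $[\SL_2]$ because of the contribution of the zero vector $\xi = 0$ and, more subtly, the ``$\xi$ isotropic with zero $F^2$-part but nonzero $V_i$-part'' terms, which grow. So the second step is to apply the standard mixed-truncation / Rankin–Selberg unfolding: one computes $\int_{[\SL_2]} \Lambda^T \Theta_f(g)\,dg$ by replacing $\Lambda^T\Theta_f$ against the constant function $1$ and moving the truncation onto the Eisenstein-type object, or equivalently by directly integrating the defining expression \eqref{varphiT}. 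The constant term $\int_{[N]} \Theta_f(ng)\,dn$ picks out precisely the $\xi$ with $Q_{i+1}(\xi)=0$ and trivial $N(F)$-action behavior; after Poisson summation in the $N$-variable this constant term is governed by the Tate integral $Z_{r_i}(\mathcal{F}_2(f),s)$ evaluated along the line $s = 2 - \dim V_i/2$, and the truncation parameter $T$ enters through the incomplete integral $\int_0^T$ (or $\int_T^\infty$) of $e^{H_B(g)\,\cdot}$, producing, after contour shifting, the sum of residues $\sum_{s_i} \mathrm{Res}_{s=s_i} e^{Ts} Z_{r_i}(\mathcal{F}_2(f), s+2-\tfrac{\dim V_i}{2})/s$ at the poles $s_i \in \{\tfrac{\dim V_i}{2}-1, \tfrac{\dim V_i}{2}-2, 0\}$. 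The pole at $s=0$ comes from the $1/s$ factor (the incomplete Mellin transform of the constant term), and the poles at $\tfrac{\dim V_i}{2}-1$ and $\tfrac{\dim V_i}{2}-2$ are the poles of the Tate integral $Z(\Psi_{\mathcal{F}_2(f)}, \chi|\cdot|^s)$ shifted appropriately — which is why the statement implicitly requires tracking the functional equation/pole structure from Lemma \ref{lem:Tate}.

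The remaining error term $o_f(1)$ as $T\to\infty$ comes from the tail estimates: the non-constant Fourier modes in the $N$-expansion of $\Theta_f$, and the geometric-side contributions of anisotropic $\xi$ (those with $Q_{i+1}(\xi)\ne 0$), decay exponentially in $T$ after truncation, by the rapid decay of the Weil-representation matrix coefficients and Lemma \ref{lem:conv}. I expect the main obstacle to be the bookkeeping of the truncation: precisely identifying which boundary contributions survive truncation as genuine residue terms versus which are absorbed into $o_f(1)$, and verifying the absolute convergence needed to interchange the sum over $\xi$, the integral over $[\SL_2]$, and the truncation — in particular making sure the collapsed orbit integral $\sum_{\xi \in X_i^\circ(F)} I(\mathcal{F}_2(f))(\xi)$ converges (this is guaranteed by lemmas \ref{lem:conv} and \ref{lem:unr:conv}) and that the exchange of truncation with unfolding does not produce spurious $T$-dependence beyond the stated polynomial-in-$e^T$-and-$T$ shape. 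A secondary technical point is the behavior when $\dim V_i \in \{2,4\}$, where $Z_{r_i}$ can have extra poles and the residue set $\{\tfrac{\dim V_i}{2}-1,\tfrac{\dim V_i}{2}-2,0\}$ has collisions; the cleanest route is to treat these by meromorphic continuation in an auxiliary complex parameter so that the residue computation is uniform, then specialize.
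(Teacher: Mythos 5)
Your proposal is correct and follows essentially the same route as the paper: Poisson summation combined with Lemma \ref{lem:F2:equiv} to pass from $\rho_{i+1}$ to $r_i$ acting on $\mathcal{F}_2(f)$, a decomposition according to whether the $F^2$-component vanishes, an unfolding of the nonzero orbit against $N(F)\backslash\SL_2(F)$, and Mellin inversion plus contour shift for the $\xi=0$ contribution (Lemma \ref{lem:break} and Propositions \ref{prop:S1}--\ref{prop:S3} in the paper). One small correction: the terms with $Q_i(\xi)\neq 0$ and zero $F^2$-part are killed identically by the integral over $[N]$, not merely suppressed exponentially; and the Rankin--Selberg-type auxiliary Eisenstein series enters only in the anisotropic base case (Lemma \ref{lem:0}), not in this inductive step.
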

Here 
\begin{align*}
\Theta_{d_i(f)}(g):&=\sum_{\xi \in V_i(F)} \rho_i(g)d_i(f)(\xi)
\end{align*}
where  $\rho_i$ acts via its action on $\mathcal{S}(V_i(\A_F))$.  
 We give the proof at the end of this section.
By induction on $i$ this reduces the study of $\int_{[\SL_2]}\Theta_{f}^T(g)dg$ to a special case to be treated in \S \ref{sec:anisotropic}.

\begin{lem} \label{lem:FE}
One has
$$
Z_{r_i}(f,s)=Z_{\rho_{i+1}}(\mathcal{F}_2^{-1}(f),1-s)
$$
as meromorphic functions in $s$.
\end{lem}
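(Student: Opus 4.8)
The plan is to unfold each side into a Tate zeta integral by the Iwasawa decomposition and then apply the global functional equation of Tate's thesis. Throughout set $\phi:=\mathcal{F}_2^{-1}(f)\in\mathcal{S}(V_{i+1}(\A_F))$, so that $f=\mathcal{F}_2(\phi)$.

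First I would record the two unfoldings. The Iwasawa computation proving Lemma \ref{lem:Tate} applies verbatim over $\A_F$: using $\SL_2(\A_F)=N(\A_F)T_2(\A_F)K$, the measure conventions of \S\ref{ssec:measures}, the torus formulas
\begin{align*}
r_i\begin{psmatrix} a & \\ & a^{-1}\end{psmatrix}F(0_{V_i},0,1)&=\chi(a)|a|^{\dim V_i/2}\,F(0_{V_i},0,a^{-1}),\\
\rho_{i+1}\begin{psmatrix} a & \\ & a^{-1}\end{psmatrix}F(0_{V_i},0,1)&=\chi(a)|a|^{\dim V_i/2+1}\,F(0_{V_i},0,a),
\end{align*}
(the first being recorded, up to absolute values, in the proof of Lemma \ref{lem:conv}, the second being the standard torus action of the Weil representation) and, in the first case, the substitution $a\mapsto a^{-1}$, one gets
\[
Z_{r_i}(f,s)=\int_{\A_F^\times}\Psi_f(a)\chi(a)|a|^s\,d^\times a,\qquad
Z_{\rho_{i+1}}(\phi,s)=\int_{\A_F^\times}\Phi_\phi(a)\chi(a)|a|^s\,d^\times a,
\]
where $\Psi_f(a):=\int_K(r_i(k)f)(0_{V_i},0,a)\,dk$ and $\Phi_\phi(a):=\int_K(\rho_{i+1}(k)\phi)(0_{V_i},0,a)\,dk$ are Schwartz functions on $\A_F$ (the Weil representation preserves the Schwartz space and $K$ is compact). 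The asymmetric shifts $2-\dim V_i/2-s$ and $1-\dim V_i/2+s$ built into \eqref{Z:def} are exactly what makes the two unfoldings reduce to zeta integrals against the same quasicharacter $\chi|\cdot|^s$.

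Next I would identify $\Psi_f$ as a $\psi$-Fourier transform of $\Phi_\phi$. By Lemma \ref{lem:F2:equiv} one has $r_i(k)f=r_i(k)\mathcal{F}_2(\phi)=\mathcal{F}_2(\rho_{i+1}(k)\phi)$ for every $k\in K$, and unwinding the definition \eqref{F2} of $\mathcal{F}_2$ (the Fourier transform with respect to $\psi$ in the second $\A_F$-coordinate) gives
\[
(r_i(k)f)(0_{V_i},0,a)=\int_{\A_F}(\rho_{i+1}(k)\phi)(0_{V_i},0,x)\,\psi(ax)\,dx.
\]
Averaging over the compact group $K$ and interchanging the two integrals yields $\Psi_f=\widehat{\Phi_\phi}$, where $\widehat{(\,\cdot\,)}$ denotes the Fourier transform with respect to $\psi$ and the self-dual Haar measure on $\A_F$, normalized as in \S\ref{ssec:measures} so that $F\backslash\A_F$ has measure $1$.

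Finally, since $\chi$ is quadratic (being a Hilbert symbol, $\chi^{-1}=\chi$), the global functional equation of Tate's thesis \cite{Weil:Basic:NT} gives, as an identity of meromorphic functions of $s$,
\[
\int_{\A_F^\times}\widehat{\Phi_\phi}(a)\chi(a)|a|^s\,d^\times a=\int_{\A_F^\times}\Phi_\phi(a)\chi(a)|a|^{1-s}\,d^\times a.
\]
By the previous two steps the left-hand side equals $Z_{r_i}(f,s)$ and the right-hand side equals $Z_{\rho_{i+1}}(\phi,1-s)=Z_{\rho_{i+1}}(\mathcal{F}_2^{-1}(f),1-s)$, which is the claim. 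I expect the main obstacle to be purely bookkeeping: carrying the powers of $|a|$ through the two Iwasawa unfoldings so that the exponents line up with Tate's equation, and checking that the use of $\psi$ rather than $\bar\psi$ in $\mathcal{F}_2$ is immaterial, which it is because the global functional equation does not depend on the choice of nontrivial additive character once the Haar measure is taken self-dual for it and $\mathrm{meas}(F\backslash\A_F)=1$.
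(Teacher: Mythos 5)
Your proof is correct and follows essentially the same route as the paper: unfold via the Iwasawa decomposition into a Tate zeta integral, identify the inner $K$-averaged function on one side as the adelic Fourier transform of the one on the other side using the equivariance in Lemma \ref{lem:F2:equiv} (equivalently the definition of $\mathcal{F}_2$), and invoke Tate's global functional equation with $\chi^{-1}=\chi$. The only cosmetic difference is order of operations: you unfold both $Z_{r_i}$ and $Z_{\rho_{i+1}}$ first and then apply Tate, while the paper unfolds $Z_{r_i}$, applies Tate, and then unwinds the resulting Fourier transform to recognize $Z_{\rho_{i+1}}(\mathcal{F}_2^{-1}(f),s)$.
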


\begin{proof}
Let
\begin{align*}
\Psi_f(x):= \int_K r_i(k)f(0, 0, x ) dk.
\end{align*}
One has
\begin{align*}
Z_{r_i}(f,s)
&=\int_{\A_F^\times \times K}  \chi(a)|a|^{s}
r_i(k)
f(0,0,a) dk d^\times a=Z(\Psi_f,\chi|\cdot|^s).
\end{align*}
Let $\widehat{\Psi}_f(x):=\int_{\A_F}\Psi_f(y)\bar{\psi}(xy)dy.$   Then we have
$$
Z(\Psi_f,\chi|\cdot|^s)=Z(\widehat{\Psi}_f,\chi|\cdot|^{1-s})
$$
by Tate's functional equation.  Here we are using the fact that $\chi=\chi^{-1}.$   Let us expand $Z(\widehat{\Psi}_f,\chi|\cdot|^s)$ for $\mathrm{Re}(s)$ sufficiently large.  It is equal to 
\begin{align*}
Z(\widehat{\Psi}_f,\chi|\cdot|^s)
&=\int_{\A_F^\times \times K}  \chi(a)|a|^{s}
\mathcal{F}_2^{-1}(r_i(k)f)(0,0,a)
 dk d^\times a\\
&=\int_{\A_F^\times \times K}  \chi(a)|a|^{-1-\dim V_i/2+s}
\rho_{i+1}\begin{psmatrix}a &  \\ & a^{-1} \end{psmatrix}\mathcal{F}_2^{-1} ( r_i(k)f)(0,0,1) dk d^\times a.
\end{align*}
By Lemma \ref{lem:F2:equiv} $ \mathcal{F}_2^{-1} \circ r_i= \rho_{i+1} \circ \mathcal{F}_2^{-1} $.   Thus the above is
\begin{align*}
&\int_{\A_F^\times \times K}  \chi(a)|a|^{-1-\dim V_i/2+s}
\rho_{i+1}\left(\begin{psmatrix}a &  \\ & a^{-1} \end{psmatrix} k\right)\mathcal{F}_2^{-1} (f)(0, 0, 1 ) dk d^\times a\\
 &=\int_{N(\A_F) \backslash \SL_2(\A_F)} e^{H_B(g)(1-\dim V_i/2+s)}\rho_{i+1}(g)\mathcal{F}_2^{-1} (f)(0,0,1)d \dot{g}\\
 &=Z_{\rho_{i+1}}(\mathcal{F}_2^{-1}(f),s).
\end{align*} 
\end{proof}

\begin{lem} \label{lem:break}
The integral $\int_{[\SL_2]}\Theta_{f}^T(g)dg$ is equal to 
\begin{align*} 
\begin{split}
\int_{[\SL_2]}\Theta^T_{d_i(f)}(g)dg+&\int_{N(\A_F) \backslash \SL_2(\A_F)} \Bigg(\one_{\leq T}(H_B(g))\sum_{ \xi \in  X_i(F)}r_i(ng)\mathcal{F}_2(f)(\xi,0,1)\\&-\one_{>T}(H_B(g)) 
\sum_{\xi \in V_i(F)}\int_{N(\A_F)} r_i(ng)\mathcal{F}_2(f)\left(\xi,1,0\right) dn
\Bigg)d\dot{g}.
\end{split}
\end{align*}
\end{lem}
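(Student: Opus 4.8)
The plan is to unfold the definition of the truncation operator, expand the theta function, and then use Poisson summation (i.e. the equivariance of the partial Fourier transform $\mathcal{F}_2$ from Lemma \ref{lem:F2:equiv}) to reorganize the resulting sum. First I would write
$$
\int_{[\SL_2]}\Theta_f^T(g)\,dg = \int_{[\SL_2]}\Theta_f(g)\,dg^{\textrm{(formally)}} - \int_{B(F)\backslash \SL_2(\A_F)} \one_{>T}(H_B(g)) \int_{[N]}\Theta_f(ng)\,dn\,dg,
$$
being careful that neither term individually converges: the point of the truncation is precisely that the difference does. So I would instead fold the $B(F)\backslash\SL_2(F)$ sum in the definition of $\Lambda^T$ together with the $[\SL_2] = B(F)\backslash \SL_2(\A_F)$ quotient appearing after we collapse, obtaining an integral over $N(F)\backslash\SL_2(\A_F)$ of $\Theta_f(g)$ minus, on the region $H_B(g)>T$, the constant term $\int_{[N]}\Theta_f(ng)\,dn$. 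On $N(F)\backslash\SL_2(\A_F)$ the full sum $\Theta_f(g) = \sum_{\xi\in V_{i+1}(F)}\rho_{i+1}(g)f(\xi)$ is not $N(F)$-periodic term by term but its sum is; so I would split $V_{i+1}(F) = V_i(F)\oplus F^2$ according to the last two coordinates $(\xi_1,\xi_2)\in F^2$, and handle the various $N(F)$-orbits on $F^2$ separately.

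The key computational input is that, under $\mathcal{F}_2$, the Weil representation $\rho_{i+1}$ becomes $r_i$ (Lemma \ref{lem:F2:equiv}), and that applying ordinary Poisson summation in the "$\xi_2$" variable converts $\sum_{\xi_2\in F}$ into the Fourier-dual sum. Concretely: $\Theta_f(g) = \Theta_{\mathcal F_2(f)}'(g)$ for an appropriate theta sum built from $r_i$, where the $F^2$-summation index now runs over the Fourier-dual lattice. The three pieces on the right-hand side of the lemma correspond exactly to the three $N(F)$-orbits on $F^2$: the orbit of $(0,0)$, which produces the rank-lowering theta function $\Theta_{\mathcal{F}_2(f)(\cdot,0,0)}$ on $V_i$ via $\rho_i$; the orbit of $(0,1)$ (equivalently $(0,*)$, $*\neq 0$), which after using the $N(F)$-action to fix the representative $(\xi,0,1)$ and using the stabilizer to turn the $N(F)$-quotient into the integral over $N(F)\backslash\SL_2(\A_F)$, yields the $\sum_{\xi\in X_i(F)}r_i(ng)\mathcal{F}_2(f)(\xi,0,1)$ term — restricted to $H_B(g)\leq T$ because this is exactly the part of $\Theta_f$ that is \emph{not} cancelled by the constant term; and the orbit of $(1,*)$ (first coordinate nonzero), which, after Poisson summation and collapsing, produces the $\int_{N(\A_F)}r_i(ng)\mathcal{F}_2(f)(\xi,1,0)\,dn$ term appearing with $\one_{>T}$, this being precisely the surviving piece of the constant term $\int_{[N]}\Theta_f(ng)\,dn$ after the other orbits' contributions to the constant term have been matched against the first sum.

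So concretely the steps are: (1) expand $\Lambda^T\Theta_f$ and collapse the $B(F)\backslash\SL_2(F)$-sum against the $[\SL_2]$-integral to get an integral over $N(F)\backslash\SL_2(\A_F)$ (with the $\one_{\leq T}$/$\one_{>T}$ bookkeeping); (2) decompose $V_{i+1}(F)$-summation by $N(F)$-orbits on the $F^2$-coordinate, getting three contributions; (3) in each, apply Lemma \ref{lem:F2:equiv} and Poisson summation in the appropriate coordinate to rewrite $\rho_{i+1}(g)f$ in terms of $r_i(g)\mathcal{F}_2(f)$; (4) for the $(0,0)$-orbit, identify $\sum_{\xi\in V_i(F)}r_i(g)\mathcal{F}_2(f)(\xi,0,0) = \Theta_{\mathcal{F}_2(f)(\cdot,0,0)}(g)$ (as the $\rho_i$-theta function — here one checks that $r_i$ restricted to functions supported on the "$V_i\oplus 0$" slice acts through $\rho_i$); truncate it with $\Lambda^T$; (5) for the $(0,1)$-orbit, unfold $N(F)\backslash\SL_2(\A_F)$ using the trivial stabilizer to produce the first nontrivial term; (6) for the $(1,0)$-orbit, recognize the remaining expression as (minus) the leftover of the constant term, producing the $\one_{>T}$-term; (7) match the constant-term subtraction $\int_{[N]}\Theta_f(ng)\,dn$ against the pieces generated in (4)–(6), verifying that everything outside $H_B(g)\le T$ in the $(0,1)$-term and everything in the $(0,0)$-term is exactly accounted for. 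The main obstacle will be step (7): carefully tracking which portions of the constant term cancel which portions of the full sum, and in particular verifying that the $(0,1)$-orbit term is correctly cut off at $H_B(g)\le T$ rather than integrated over all of $N(\A_F)\backslash\SL_2(\A_F)$ — this is where the truncation parameter genuinely enters and where sign/measure bookkeeping (the factor $d\dot g$ versus $dn\,d\dot g$, and the role of $\int_{[N]}$ versus $\int_{N(\A_F)}$) is most delicate. Convergence of each orbitwise piece (so that the rearrangement is legitimate) follows from Lemma \ref{lem:conv} and Lemma \ref{lem:unr:conv}, applied after the Fourier transform.
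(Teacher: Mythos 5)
Your plan is essentially the paper's proof: Lemma \ref{lem:F2:equiv} plus Poisson summation in the last coordinate to rewrite $\Theta_f(g)=\sum_{\xi\in V_{i+1}(F)}r_i(g)\mathcal{F}_2(f)(\xi)$, separation of the $F^2$-index into $(0,0)$ versus $(0,\alpha)$ with $\alpha\neq 0$ versus $(\beta,\alpha)$ with $\beta\neq 0$, unfolding, and then computing $\int_{[N]}$ to isolate $X_i(F)$ via the oscillation $\psi(tQ_i(\xi))$. Two cautions, though, on your concrete-steps list, each of which would derail a literal execution. First, $[\SL_2]=\SL_2(F)\backslash\SL_2(\A_F)$, not $B(F)\backslash\SL_2(\A_F)$; collapsing the $\gamma$-sum from $\Lambda^T$ against $[\SL_2]$ lands you on $B(F)\backslash\SL_2(\A_F)$, and reaching $N(F)\backslash\SL_2(\A_F)$ requires one further unfolding of the $T_2(F)$-action, which the paper accomplishes by noting that $T_2(F)$ acts simply transitively on the representatives $(0,\alpha)$, $\alpha\in F^\times$. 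Second, and more substantively, the Poisson/$\mathcal{F}_2$ step and the $(0,0)$-versus-$\neq(0,0)$ split must \emph{precede} the collapse of the $\gamma$-sum, not follow it as your steps (1)--(3) indicate: the first summand $\Theta_f(g)$ in $\Lambda^T\Theta_f$ carries no $\gamma$-sum and $\int_{[\SL_2]}\Theta_f(g)\,dg$ diverges, so you cannot collapse it by itself. Only the $(\xi_1',\xi_2')\neq(0,0)$ piece of $\sum_{\xi}r_i(g)\mathcal{F}_2(f)(\xi)$ unfolds, via the transitive $L^\vee$-action of $\SL_2(F)$ on $F^2\setminus\{0\}$ (stabilizer of $(0,1)$ being $N(F)$), while the $(0,0)$ piece does not unfold at all and is carried along as the untouched term $\int_{[\SL_2]}\Theta^T_{\mathcal{F}_2(f)(\cdot,0,0)}$ for the inductive reduction.
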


\noindent Here is where we make key use of the transform $\mathcal{F}_2$.  It converts the integral of the truncated $\Theta$-function into an object that can be unfolded.
Before beginning the proof we make some remarks on convergence issues.   
The lemma reduces the absolute convergence of $\int_{[\SL_2]}\Theta_f^T(g)dg$ to the absolute convergence of $\int_{[\SL_2]}\Theta_{d_i(f)}^T(g)dg$ together with the absolute convergence of the other summand.  The absolute convergence of the other summand is checked in propositions \ref{prop:S1}, \ref{prop:S2}, and \ref{prop:S3}.  So by induction we are reduced to the absolute convergence statement in Lemma \ref{lem:0}.  Lemma \ref{lem:0} treats the case we excluded at the beginning of the current section when we assumed $i \geq 0.$  Altogether this justifies the manipulations below.  

\begin{proof}
By Lemma \ref{lem:F2:equiv} and Poisson summation we obtain
\begin{align} \label{F2:PS}
\Theta_f(g)=\sum_{\xi \in V_{i+1}(F)} r_i(g)\mathcal{F}_2(f)(\xi).
\end{align}
Thus
\begin{align*}
\int_{[\SL_2]}&\Theta_f^T(g)dg
=\int_{[\SL_2]} \Bigg(\sum_{(\xi,\xi'_1,\xi'_2) \in  V_i(F)\oplus F^2}r_i(g)\mathcal{F}_2(f)(\xi,\xi'_1,\xi_2') \\&- \sum_{\gamma \in B(F) \backslash \SL_2(F)}\one_{>T}(H_B(\gamma g)) \int_{[N]}\sum_{(\xi,\xi'_1,\xi'_2) \in  V(F) \oplus F^2}r_{i}(n\gamma g)\mathcal{F}_2(f)(\xi,\xi'_1,\xi'_2) dn\Bigg)dg.
\end{align*}

We separate the contributions of $(\xi'_1,\xi'_2)=(0,0)$ and $(\xi'_1,\xi'_2) \neq (0,0)$ to write this as the sum of 
\begin{align*}
&\int_{[\SL_2]} \Bigg(\sum_{\xi \in  V_i(F)}r_i(g)\mathcal{F}_2(f)(\xi,0,0) \\&- \sum_{\gamma \in B(F) \backslash \SL_2(F)}\one_{>T}(H_B(\gamma g)) \int_{[N]}\sum_{\xi \in V_i(F)}r_i(n\gamma g)\mathcal{F}_2(f)(\xi,0,0) dn\Bigg)dg\\
&=\int_{[\SL_2]}\Theta_{d_i(f)}^T(g)d\dot{g}
\end{align*}
and
\begin{align*} 
&\int_{[\SL_2]} \Bigg(\sum_{(\xi,\xi'_1,\xi'_2)}r_i(g)\mathcal{F}_2(f)(\xi,\xi_1',\xi_2') \\&- \sum_{\gamma \in B(F) \backslash \SL_2(F)}\one_{>T}(H_B(\gamma g)) \int_{[N]}\sum_{(\xi,\xi'_1,\xi'_2)}r_i(n\gamma g)\mathcal{F}_2(f)(\xi,\xi'_1,\xi'_2) dn\Bigg)dg 
\end{align*}
where the sums are over $(\xi,\xi'_1,\xi'_2) \in V_i(F)\oplus F^2$ such that $(\xi'_1,\xi'_2) \neq (0,0)$.  The latter expression is equal to 
\begin{align} \label{before:rearr} \begin{split}
&\int_{B(F) \backslash \SL_2(\A_F)} \Bigg(\sum_{ (\xi,\alpha) \in  V_i(F) \times F^\times}r_i(g)\mathcal{F}_2(f)(\xi,0,\alpha)\\& - \one_{>T}(H_B(g)) \int_{[N]}\sum_{(\xi,\xi'_1,\xi'_2)}r_{i}(n g)\mathcal{F}_2(f)(\xi,\xi'_1,\xi'_2) dn\Bigg)dg\\
&=\int_{N(F) \backslash \SL_2(\A_F)} \Bigg(\sum_{ \xi \in  V_i(F)}r_i(g)\mathcal{F}_2(f)(\xi,0,1)\\& - \one_{>T}(H_B(g)) \int_{[N]}\sum_{\xi \in V_i(F)}\sum_{(\xi_1',\xi'_2 )/\sim }r_i(n g)\mathcal{F}_2(f)(\xi,\xi'_1,\xi_2') dn\Bigg)dg.
\end{split}
\end{align}
The last sum is over $(\xi_1',\xi'_2) \in F^2-\{(0,0)\}$ up to equivalence, where $(\xi'_1,\xi'_2)$ is equivalent to $(\alpha \xi'_1,\alpha^{-1} \xi'_2)$ for all $\alpha \in F^\times$.

For each $\xi \in V_i(F)$ one has 
\begin{align*}
\int_{[N]}&\sum_{(\xi_1',\xi_2')/\sim }r_i(n g)\mathcal{F}_2(f)\left(\xi,\xi'_1,\xi'_2\right) dn\\
&=\int_{[N]}r_i(n g)\mathcal{F}_2(f)\left(\xi,0,1\right) dn+\int_{[N]} \sum_{\alpha \in F}r_i(ng)\mathcal{F}_2(f)\left( \xi,1,\alpha\right) dn.
\end{align*}
The left summand vanishes unless $\mathcal{Q}_i(\xi)=0$, in which case it is equal to $r_i( g)\mathcal{F}_2(f)\left(\xi,0,1\right)$.  Thus \eqref{before:rearr} is
\begin{align*}
    &\int_{N(F) \backslash \SL_2(\A_F)} \Bigg(\sum_{ \xi \in  V_i(F)-X_i(F)}r_i(g)\mathcal{F}_2(f)(\xi,0,1)+\one_{\leq T}(H_B(g))\sum_{\xi \in X_i(F)} r_i(g)\mathcal{F}_2(f)(\xi,0,1)   \\
&-\one_{>T}(H_B(g)) 
    \sum_{\xi \in V_i(F)} \int_{N(\A_F)}  r_i(ng)\mathcal{F}_2(f)\left( \xi,1,0\right) dn\Bigg)dg.
\end{align*}
Using lemmas \ref{lem:conv} and \ref{lem:unr:conv} we see that it is permissible to bring the integral over $[N]$ inside the sum over $\xi \in V_i(F) -X_i(F).$
Applying \eqref{20} in the definition of the Weil representation (see \S \ref{ssec:WR}),
we deduce that this contribution is zero.  The lemma follows.  
\end{proof}

We now break the second summand in Lemma \ref{lem:break} into three pieces that we compute in the following three propositions:

\begin{prop} \label{prop:S1}
The expression
$$
\int_{N(\A_F) \backslash \SL_2(\A_F)} \one_{\leq T}(H_B(g))\sum_{ \xi \in  X^\circ_i(F)}\left|r_i(g)\mathcal{F}_2(f)(\xi,0,1)\right| d\dot{g}
$$
converges and one has
$$
\lim_{T \to \infty} \int_{N(\A_F) \backslash \SL_2(\A_F)} \one_{\leq T}(H_B(g))\sum_{ \xi \in  X^\circ_i(F)}r_i(g)\mathcal{F}_2(f)(\xi,0,1) d\dot{g}=\sum_{\xi \in X_i^{\circ}(F)}I(\mathcal{F}_2(f))(\xi).
$$
The sum on the right is absolutely convergent.
\end{prop}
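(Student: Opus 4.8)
The plan is to prove the absolute convergence assertion first — in fact with the truncation $\one_{\leq T}$ replaced by $1$ — and then to read off the limit by dominated convergence together with Fubini's theorem. Write $\phi:=\mathcal{F}_2(f)\in\mathcal{S}(V_i(\A_F)\oplus\A_F^2)$. The claim to establish is
\[
\int_{N(\A_F) \backslash \SL_2(\A_F)} \sum_{ \xi \in  X^\circ_i(F)}\left|r_i(g)\phi(\xi,0,1)\right| d\dot{g}<\infty .
\]
Granting this, the integral in the proposition is dominated by the left-hand side and so converges for every $T$; as $T\to\infty$ one has $\one_{\leq T}(H_B(g))\to 1$ pointwise, and the function $g\mapsto\sum_{\xi\in X^\circ_i(F)}|r_i(g)\phi(\xi,0,1)|$ is integrable on $N(\A_F)\backslash\SL_2(\A_F)$, so dominated convergence shows that the limit in the statement equals $\int_{N(\A_F) \backslash \SL_2(\A_F)} \sum_{ \xi \in  X^\circ_i(F)}r_i(g)\phi(\xi,0,1)\, d\dot{g}$; and Fubini's theorem (legitimate by the displayed bound) turns this into $\sum_{\xi \in X_i^{\circ}(F)}\int_{N(\A_F) \backslash \SL_2(\A_F)}r_i(g)\phi(\xi,0,1)\, d\dot{g}=\sum_{\xi \in X_i^{\circ}(F)}I(\mathcal{F}_2(f))(\xi)$ by the definition \eqref{I} of $I$. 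If $i=0$ then $Q_0$ is anisotropic and $X^\circ_0(F)=\emptyset$, so the proposition is vacuous; thus assume $i\geq 1$.

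It remains to prove the displayed inequality, and this is where the work lies. By Tonelli's theorem the left-hand side equals $\sum_{\xi\in X^\circ_i(F)}\int_{N(\A_F) \backslash \SL_2(\A_F)}|r_i(g)\phi(\xi,0,1)|\,d\dot{g}$, so it suffices to bound each summand and then sum over $\xi$. Reduce to the case that $\phi=\otimes_v\phi_v$ is factorizable, with $\phi_v$ the basic function for all $v$ outside a finite set; then, since $N(\A_F)\backslash\SL_2(\A_F)$ together with its measure is the restricted direct product of the local quotients $N(F_v)\backslash\SL_2(F_v)$, one has $\int_{N(\A_F) \backslash \SL_2(\A_F)}|r_i(g)\phi(\xi,0,1)|\,d\dot{g}=\prod_v\int_{N(F_v) \backslash \SL_2(F_v)}|r_i(g)\phi_v(\xi,0,1)|\,d\dot{g}$, a convergent product whose factors are exactly the local integrals controlled by Lemma \ref{lem:conv} (with the archimedean factors handled by \cite[Lemma 8.1]{Getz:Liu:Triple}) and, at the unramified places, by Lemma \ref{lem:unr:conv}. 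These bounds show that the summand vanishes unless $\xi$ lies in a fixed fractional $\OO_F$-lattice and that it decays faster than any power of the height of $\xi$; since $X^\circ_i(F)$ is discrete and contains only polynomially many points of bounded height, the sum over $\xi\in X^\circ_i(F)$ converges. This is, of course, exactly the estimate underlying the absolute convergence of $\sum_{\xi\in X^\circ_i(F)}I(\mathcal{F}_2(f))(\xi)$ mentioned after Theorem \ref{thm:main}.

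The convergence estimate of the previous paragraph is the substantive step; the passage to the limit is then formal. It is worth noting that the truncation plays no essential role here: the region $H_B(g)\leq T$ inside $N(\A_F)\backslash\SL_2(\A_F)$ still has infinite volume, and the integral is absolutely convergent for the untruncated integrand because $\sum_{\xi\in X^\circ_i(F)}|r_i(g)\phi(\xi,0,1)|$ decays rapidly as $H_B(g)\to-\infty$ (via the factor $a^{-1}$ that the Iwasawa decomposition places in the $\A_F^2$-variable) and is, by the local estimates, integrable as $H_B(g)\to+\infty$ as well. The genuine use of the cutoff $\one_{\leq T}$ — producing the terms involving $e^{Ts}$ in Theorem \ref{thm:dim:red} — occurs only in the companion Propositions \ref{prop:S2} and \ref{prop:S3}.
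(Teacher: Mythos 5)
Your argument is correct and follows exactly the same route as the paper: establish absolute convergence of the untruncated expression $\sum_{\xi\in X_i^\circ(F)}\int_{N(\A_F)\backslash\SL_2(\A_F)}|r_i(g)\mathcal{F}_2(f)(\xi,0,1)|\,d\dot{g}$ from the local estimates of Lemma \ref{lem:conv} and Lemma \ref{lem:unr:conv}, then pass to the limit by dominated convergence and Fubini. The paper states this in two sentences and leaves the details implicit; you have simply written out what ``the proposition follows'' means, including the reduction to factorizable test functions and the observation that the finite-place support condition keeps $\xi$ in a lattice so that the archimedean rapid decay dominates the sum.
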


\begin{proof}
It is easy to see from Lemma \ref{lem:conv} and Lemma \ref{lem:unr:conv} that
\begin{align*}
&\sum_{\xi \in X_i^{\circ}(F)}\int_{N(\A_F) \backslash \SL_2(\A_F)}  |r_i(g)\mathcal{F}_2(f)|( \xi,0,1) d\dot{g}<\infty \quad \textrm{ and }\\
&\sum_{\xi \in X_i^\circ(F)}|I(\mathcal{F}_2(f))(\xi)|<\infty.
\end{align*}
The proposition thus follows from the dominated convergence theorem.
\end{proof}

\begin{prop} \label{prop:S2}
The integral
$$
\int_{N(\A_F) \backslash \SL_2(\A_F)} \one_{\leq T}(H_B(g))r_i(g)\mathcal{F}_2(f)(0_{V_i},0,1)
d\dot{g} 
$$
converges absolutely and 
is equal to  
\begin{align*}\sum_{s_i \in \left\{\frac{\dim V_i}{2}-1,\,\frac{\dim V_i}{2}-2,\,0\right\}}  \mathrm{Res}_{s=s_i}\frac{e^{Ts}Z_{r_i}(\mathcal{F}_2(f),s+2-\frac{\dim V_i}{2})}{s}+o_f(1)
\end{align*}
as $T \to \infty$.
\end{prop}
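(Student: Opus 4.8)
The plan is to unfold the left-hand side via the Iwasawa decomposition, recognize the result as a truncated Tate integral attached to $\Psi_{\mathcal{F}_2(f)}$, and then extract the $T$-dependence by Mellin inversion followed by a contour shift.

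First I would apply the Iwasawa decomposition $\SL_2(\A_F)=N(\A_F)T_2(\A_F)K$, the measure normalization of \S\ref{ssec:measures}, and the elementary identity $r_i(\mathrm{diag}(a,a^{-1}))\phi(0_{V_i},0,1)=\chi(a)|a|^{\dim V_i/2}\phi(0_{V_i},0,a^{-1})$ (the torus part of the Weil representation, already used in the proof of Lemma~\ref{lem:conv}). After the change of variables $a\mapsto a^{-1}$, and writing $\Psi_{\mathcal{F}_2(f)}(x):=\int_K r_i(k)\mathcal{F}_2(f)(0_{V_i},0,x)\,dk$ as in Lemma~\ref{lem:Tate}, the integral in question becomes
\[
\mathcal{J}(T):=\int_{N(\A_F)\backslash\SL_2(\A_F)}\one_{\leq T}(H_B(g))\,r_i(g)\mathcal{F}_2(f)(0_{V_i},0,1)\,d\dot g
=\int_{\A_F^\times}\one_{|a|\geq e^{-T}}\,\Psi_{\mathcal{F}_2(f)}(a)\,\chi(a)\,|a|^{2-\dim V_i/2}\,d^\times a .
\]
For absolute convergence at a fixed $T$ one replaces $\Psi_{\mathcal{F}_2(f)}$ by $\int_K|r_i(k)\mathcal{F}_2(f)|(0_{V_i},0,a)\,dk$, which is dominated by a Schwartz--Bruhat function of $a$: its rapid decay as $|a|\to\infty$ controls the tail, while on $\{e^{-T}\leq|a|\leq 1\}$ one uses $\int_{(\A_F^\times)^1}|\Psi_{\mathcal{F}_2(f)}|(ta_1)\,d^\times a_1=O(t^{-1})$ as $t\to 0$ (decay of a Schwartz--Bruhat function in the $F^\times$-orbit directions, as in Tate's thesis) together with $\int_{e^{-T}}^{1}t^{-1}\,t^{2-\dim V_i/2}\,\tfrac{dt}{t}<\infty$ for each fixed $T$.

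Next I would write $\mathcal{J}(T)$ as a Mellin transform of the meromorphic function $Z_{r_i}$. By Lemma~\ref{lem:Tate}, $\int_{\A_F^\times}\Psi_{\mathcal{F}_2(f)}(a)\chi(a)|a|^{w}\,d^\times a=Z_{r_i}(\mathcal{F}_2(f),w)$ for $\mathrm{Re}(w)>1$; substituting $\one_{|a|\geq e^{-T}}=\one_{e^T|a|\geq 1}=\tfrac{1}{2\pi i}\int_{(c)}\tfrac{(e^T|a|)^z}{z}\,dz$ ($c>0$) and interchanging the two integrals --- legitimate for $c$ sufficiently large, and justified either by first inserting a smoothing factor $\prod_{j=1}^{k}(z+j)^{-1}$ and removing it afterwards, or equivalently by running Tate's functional equation directly on the truncated integral --- yields, for $c$ large,
\[
\mathcal{J}(T)=\frac{1}{2\pi i}\int_{(c)}\frac{e^{Tz}}{z}\,Z_{r_i}\!\Big(\mathcal{F}_2(f),\,z+2-\tfrac{\dim V_i}{2}\Big)\,dz .
\]
Now I would shift the contour to $\mathrm{Re}(z)=-\delta$ for a small $\delta>0$. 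The integrand is holomorphic except at $z=0$ (the factor $z^{-1}$) and, when $\chi$ is trivial, at the two points where $z+2-\tfrac{\dim V_i}{2}\in\{0,1\}$, i.e.\ $z\in\{\tfrac{\dim V_i}{2}-2,\ \tfrac{\dim V_i}{2}-1\}$; these are the two simple poles of the Tate integral $Z_{r_i}$ (when $\chi$ is nontrivial $Z_{r_i}$ is entire and the corresponding residues vanish). These are exactly the three values $s_i$ appearing in the statement. Collecting the residues crossed and bounding the remaining integral on $\mathrm{Re}(z)=-\delta$ by $e^{-T\delta}$ times a $T$-independent convergent integral --- convergent by the standard rapid decay of Tate zeta integrals in vertical strips, obtained by writing $\mathcal{F}_2(f)$ as a finite sum of pure tensors so that $Z_{r_i}(\mathcal{F}_2(f),\cdot)$ is a finite sum of products of local Tate integrals, each with the usual gamma-type vertical decay --- gives
\[
\mathcal{J}(T)=\sum_{s_i\in\{\frac{\dim V_i}{2}-1,\,\frac{\dim V_i}{2}-2,\,0\}}\mathrm{Res}_{z=s_i}\frac{e^{Tz}Z_{r_i}(\mathcal{F}_2(f),z+2-\frac{\dim V_i}{2})}{z}+O_f(e^{-T\delta}),
\]
which is the asserted identity. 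When $\dim V_i\in\{2,4\}$ two of the listed poles coincide (with each other or with $z=0$): the residue at $z=0$ then accounts for the resulting double pole, and when $\dim V_i=2$ the pole at $z=-1$ lies to the left of $-\delta$ but its residue is $O_f(e^{-T})$ and may therefore be included without affecting the error, so the displayed formula holds verbatim.

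The step I expect to be the main obstacle is the usual one in Arthur-type truncation computations: justifying the interchange of integration in the Mellin step despite only conditional convergence of the vertical integral, and supplying the vertical-strip decay of $Z_{r_i}(\mathcal{F}_2(f),\cdot)$ needed to move the contour past $\mathrm{Re}(z)=0$. Both are routine here because $\mathcal{F}_2(f)$ is a finite sum of pure tensors and, by Lemma~\ref{lem:Tate}, $Z_{r_i}$ is assembled from ordinary Tate integrals.
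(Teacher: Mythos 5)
Your proof follows the same strategy as the paper's: reduce by Iwasawa to a truncated Tate integral (the $\Psi_{\mathcal{F}_2(f)}$ of Lemma~\ref{lem:Tate}), express the truncation indicator via a Perron/Mellin contour integral $\frac{1}{2\pi i}\int_{(c)}\frac{e^{Ts}}{s}Z_{r_i}(\mathcal{F}_2(f),s+2-\frac{\dim V_i}{2})\,ds$, and shift the contour leftward past $\mathrm{Re}(s)=0$ to pick up the residues at $s\in\{\frac{\dim V_i}{2}-1,\frac{\dim V_i}{2}-2,0\}$, with the remaining integral $O_f(e^{-\delta T})$. The extra care you take in justifying the interchange and the vertical-strip decay (smoothing factor, pure-tensor decomposition) goes slightly beyond what the paper records but does not change the argument.
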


\begin{proof}
  We have
\begin{align} \label{before:Parseval} \begin{split}
&\int_{N(\A_F) \backslash \SL_2(\A_F)} \one_{\leq T}(H_B(g))
r_i(g)\mathcal{F}_2(f)(0_{V_i},0,1)d\dot g\\
&= \int_{\A_F^\times \times K} \one_{\leq T}(\log |a|)\chi(a)|a|^{\dim V_i/2}r_i(k)\mathcal{F}_2(f)(0_{V_i},0,a^{-1})\frac{dk d^\times a}{|a|^2}.  \end{split}
\end{align}
Note that this expression is absolutely convergent.  Indeed, the integral is supported in $|a|\leq e^T,$ and the integrand is rapidly decreasing as $|a| \to 0.$  

We wish to apply Mellin inversion to \eqref{before:Parseval}. Define $A_{\GG_m}$ as in \eqref{A}. For $\mathrm{Re}(s) >0$ we have
\begin{align}
    \int_{A_{\GG_m}}\one_{\leq T}(\log |a|) |a|^sd^\times a=\frac{e^{Ts}}{s}
\end{align}
and for $\mathrm{Re}(s)$ sufficiently small we have
\begin{align}
\int_{\A_F^\times \times K}\chi(a)|a|^{\dim V_i/2+s}r_i(k)\mathcal{F}_2(f)(0_{V_i},0,a^{-1})\frac{dk d^\times a}{|a|^2}=Z_{r_i}(\mathcal{F}_2(f),2-\tfrac{\dim V_i}{2}-s).
\end{align}
Hence \eqref{before:Parseval} is equal to 
\begin{align*}
    \frac{1}{2\pi i }\int_{i\RR+\sigma} e^{Ts}Z_{r_i}(\mathcal{F}_2(f),s+2-\tfrac{\dim V_i}{2})\frac{ds}{s}
\end{align*}
for $\sigma$ sufficiently large.

We now shift the contour to $\sigma$ very small to see that the integral in the proposition is equal to
\begin{align}
\sum_{s_i \in \left\{\frac{\dim V_i}{2}-1, \frac{\dim V_i}{2}-2,0\right\}}  \mathrm{Res}_{s=s_i}\frac{ e^{Ts}Z_{r_i}(\mathcal{F}_2(f),s+2-\tfrac{\dim V_i}{2})}{s}+o_f(1).
\end{align}
\end{proof}

\begin{prop} \label{prop:S3}
As $T \to \infty$ 
\begin{align*}
\int_{N(\A_F) \backslash \SL_2(\A_F)} \one_{>T}(H_B(g)) 
\int_{N(\A_F)}\sum_{\xi \in V_i(F)}|r_i(ng)\mathcal{F}_2(f)\left(\xi,1,0\right) |
d\dot{g}=o_f(1).
\end{align*}
\end{prop}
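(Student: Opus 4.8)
The plan is to unfold the inner unipotent integral, apply the Iwasawa decomposition, and then exploit the fact that on the truncation region the torus variable $a$ occupies a Schwartz-slot while being forced to have large absolute value.

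First I would unfold the integral over $N(\A_F)$. Writing $n=n(x)$ and using $r_i=\rho_i\otimes L^\vee$ together with the Weil-representation formula $\rho_i\begin{psmatrix}1 & x\\ & 1\end{psmatrix}h(\xi)=\psi(xQ_i(\xi))h(\xi)$, one gets $|r_i(n(x)g)\mathcal{F}_2(f)(\xi,1,0)|=|(r_i(g)\mathcal{F}_2(f))(\xi,1,x)|$, so the inner integral equals $\int_{\A_F}\sum_{\xi\in V_i(F)}|(r_i(g)\mathcal{F}_2(f))(\xi,1,x)|\,dx$. I would then Iwasawa-decompose the outer integral, $g=\begin{psmatrix}a & \\ & a^{-1}\end{psmatrix}k$ with $d\dot g=|a|^{-2}d^\times a\,dk$ and $H_B(g)=\log|a|$. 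Using $\rho_i\begin{psmatrix}a&\\&a^{-1}\end{psmatrix}h(\xi)=\chi(a)|a|^{\dim V_i/2}h(a\xi)$, the fact that $L^\vee\begin{psmatrix}a&\\&a^{-1}\end{psmatrix}$ replaces $(u_1,u_2)$ by $(au_1,a^{-1}u_2)$, and the substitution $x\mapsto ay$, the whole expression becomes
$$
\int_K\int_{\A_F^\times}\one_{|a|>e^T}\,|a|^{\frac{\dim V_i}{2}-1}\sum_{\xi\in V_i(F)}\Xi_k(a\xi,a)\,d^\times a\,dk,\qquad \Xi_k(v,u_1):=\int_{\A_F}|(r_i(k)\mathcal{F}_2(f))(v,u_1,y)|\,dy.
$$
Since $K$ is compact we may bound $\Xi_k\le\Xi$ by a single nonnegative function $\Xi$ on $V_i(\A_F)\oplus\A_F$ majorized by a Schwartz function and with finite part of fixed compact support (compare the proof of Lemma \ref{lem:conv}).

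The decisive observation is that $a$ is now the $u_1$-argument of $\Xi$ while $|a|=e^{H_B(g)}>e^T$ is large. Because the finite-adelic part of $u_1$ is confined to a fixed compact set, any contributing $a$ satisfies $\prod_{v\nmid\infty}|a|_v\le C_0$, so by the product formula $|a|_\infty>e^T/C_0$, and the Schwartz decay of $\mathcal{F}_2(f)$ in the $u_1$-slot at the archimedean places then contributes a factor $O_N(e^{-TN})$ for every $N$, dominating the polynomial weight $|a|^{\dim V_i/2-1}$. For the term $\xi=0$ the remaining $a$-integral is the large-$|a|$ tail of a Tate integral in the Schwartz function $u_1\mapsto\Xi(0,u_1)$, hence convergent and $o_f(1)$ as $T\to\infty$. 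For the terms $\xi\ne0$ I would write $V_i(F)\setminus\{0\}=\bigsqcup_{[\xi]\in\mathbb{P}(V_i)(F)}F^\times\xi$ and fold the $\A_F^\times$-integral back over $[\GG_m]=F^\times\backslash\A_F^\times$ (whose norm-one part is compact), reducing to rapid decay in $|a|$ of $a\mapsto\sum_{\gamma\in F^\times}\Xi(\gamma a\xi,\gamma a)$; this holds because the product formula and the finite-place support of $\Xi$ force every nonzero $\gamma a\xi$ to have archimedean norm $\gg|a|$, so the associated lattice (of covolume $\asymp|a|^{\dim V_i}$) has no short vectors and the theta-type sum is $O_N(|a|^{-N})$, uniformly enough in $\xi$ to sum over $\mathbb{P}(V_i)(F)$. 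Assembling the three contributions and letting $T\to\infty$ yields the claim.

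The main obstacle is exactly this last estimate: one must track the interplay of finite and archimedean places — transferring largeness of $|a|$ to largeness of $|a|_\infty$ via the product formula — while simultaneously controlling the theta-type sum over $\xi\in V_i(F)$ and the integration over the noncompact norm-one directions of the torus, where a naive pointwise majorant fails to be integrable; one genuinely needs the archimedean Schwartz decay of $\mathcal{F}_2(f)$ in the $V_i$-slot (to tame the $\xi$-sum along those directions) as well as in the $u_1$-slot (to produce the $e^{-TN}$ saving). The absolute convergence of the integral for each fixed $T$, implicit in the statement, drops out of the same analysis.
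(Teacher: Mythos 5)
Your proposal follows the paper's proof closely: both unfold the inner $N(\A_F)$-integral and Iwasawa-decompose, arrive after a change of variable at the expression $\int_{\A_F^\times\times K}\one_{|a|>e^T}|a|^{\dim V_i/2-1}\sum_{\xi}\int_{\A_F}|r_i(k)\mathcal{F}_2(f)|(a\xi,a,t)\,dt\,d^\times a\,dk$, and then exploit the crucial fact that the $u_1$-slot now carries $a$, so the Schwartz decay there makes the truncated $a$-integral a convergent Tate-type tail. The only difference is that the paper dispatches the convergence in one sentence (truncated Tate integral; the $\xi$-sum is controlled because the integrand is Schwartz in $\xi$), while you split off $\xi=0$ and handle $\xi\neq 0$ by folding over $[\GG_m]$ and a lattice-point count; this is correct in spirit but unnecessary --- the Schwartz decay of the majorant in its $V_i$-slot already bounds $\sum_\xi\Xi(a\xi,a)$ uniformly for $|a|$ bounded below, which is all that is needed once the $u_1$-slot forces the $a$-integral to converge. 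Your observation about transferring largeness of $|a|$ to the archimedean places via the finite-place support and the product formula is a genuine point that the paper leaves implicit, and it is correctly made.
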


\begin{proof}
The integral in the proposition is equal to 
\begin{align*}
&\int_{\A_F^\times \times K}  \one_{>T}(\log |a|) 
\sum_{\xi \in V_i(F)}\int_{\A_F}|a|^{\dim V_i/2-2}
|r_{i}(k)\mathcal{F}_2(f)|(a\xi,a,a^{-1}x)dx
d^\times adk \\
&=\int_{\A_F^\times \times K} \one_{>T}(\log |a|) 
\sum_{\xi \in V_i(F)}\int_{\A_F}|a|^{\dim V_i/2-1}
|r_i(k)\mathcal{F}_2(f)|\left(
a\xi,a,x
\right) dx
d^\times a dk.
\end{align*}
Use the idelic norm $|\cdot|:A_{\GG_m} \tilde{\to}\RR_{>0}$ to identify $A_{\GG_m}$ and $\RR_{>0}.$  Then the above is 
\begin{align*}
&\int_{e^T}^\infty \int \Bigg( 
\sum_{\alpha \in F^\times} \sum_{\xi \in V_i(F)}\int_{\A_F}t^{\dim V_i/2-1}
|r_i(k)\mathcal{F}_2(f)|\left(
at\alpha \xi,at\alpha,x
\right) dx
\Bigg)d^\times a dk \frac{dt}{t}\\
&=\int_{e^T}^\infty \int \Bigg( 
\sum_{\alpha \in F^\times} \sum_{\xi \in V_i(F)}\int_{\A_F}t^{\dim V_i/2-1}
|r_i(k)\mathcal{F}_2(f)|\left(
at\xi,at\alpha,x
\right) dx
\Bigg)d^\times a dk \frac{dt}{t}
\end{align*}
where the middle integral is over $F^\times \backslash (\A_F^\times)^1 \times K.$
Since $F^\times \backslash (\A_F)^1 \times K$ is compact and the integral over $t$ is supported in $t>e^T$ we see that this expression converges absolutely.  It becomes smaller as $T$ becomes larger.  
\end{proof}

\begin{proof}[Proof of Theorem \ref{thm:dim:red}]
This is immediate from Lemma \ref{lem:break} and propositions \ref{prop:S1}, \ref{prop:S2}, and \ref{prop:S3}.
\end{proof} 

\section{Integrals of truncated anisotropic theta functions} \label{sec:anisotropic}

Assume for this section that $V_i:=V_{0}$ is an even dimensional vector space equipped with a nondegenerate anisotropic quadratic form $Q_0$.  Let $f \in \mathcal{S}(V_0(\A_F))$. 
 We refer to $\Theta_f(g)$ as an \textbf{anisotropic theta function}.
 We allow the special case where $V_0=\{0\}$.  In this case we define $\mathcal{S}(V_0(\A_F)):=\CC$ and the Weil representation is taken to be the trivial representation of $\SL_2(\A_F)$.  

Our aim is to compute $\int_{[\SL_2]}\Theta_f^T(g)dg$.  Since $Q_0$ is anisotropic we cannot reduce this computation to a smaller quadratic space as we did above.  Instead, we apply a variant of the classical Rankin-Selberg method.  Let $\Phi \in \mathcal{S}(\A_F^2).$   For $g \in \SL_2(\A_F)$ let
\begin{align}
\Phi_s(g):=\int_{\A_F^\times}\Phi(\begin{psmatrix} 0 & t \end{psmatrix}g)|t|^{2s}d^\times t.
\end{align}
Moreover let
\begin{align}\label{eq:sl2eis}
E(g,\Phi_s):=\sum_{\xi' \in F^2-\{0\} }\Phi_s(\xi' g).
\end{align}
Then, as is well-known, $E(g,\Phi_s)$ converges absolutely for $\mathrm{Re}(s)$ large enough and admits a meromorphic continuation to the plane.  Its residue at $s=1$ is 
$$
\frac{\widehat{\Phi}(0)}{2}:=\frac{1}{2}\int_{\A_F^2}\Phi(x,y)dxdy.
$$
In particular the residue is independent of $g.$  For all of this we refer the reader to \cite[\S 1]{JacquetZagier}.  Define 
\begin{align} \label{kappa0}
    \kappa:=\begin{cases}\mathrm{meas}([\SL_2]) &\textrm{ if }\dim V_0=0 \\
    0 &\textrm{ otherwise.}\end{cases}
\end{align}

\begin{lem} \label{lem:0} 
The integral $\int_{[\SL_2]}\Theta_f^T(g)dg$ converges absolutely.  If $\dim V_0=0$
$$
\lim_{T \to \infty}\int_{[\SL_2]}\Theta_f^T(g)dg=\kappa f(0).
$$
If $\dim V_0 \geq 4,$ then $\int_{[\SL_2]}\Theta_f^T(g)dg$ is a polynomial in $e^T$ and if $0<\dim V_0<4,$ it is a polynomial in $e^{-T}.$  If $\chi \neq 1$ then this polynomial vanishes identically.  If $\chi=1$ then the constant term of the polynomial is $0.$ 
\end{lem}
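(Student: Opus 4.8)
The plan is to compute $\int_{[\SL_2]}\Theta_f^T(g)\,dg$ by a regularized Rankin--Selberg argument, pairing the truncated theta function with the degenerate Eisenstein series $E(\,\cdot\,,\Phi_s)$ introduced above.

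Convergence is the easy part: $\Theta_f$ has moderate growth, so $\Theta_f^T=\Lambda^T\Theta_f$ is rapidly decreasing for $T$ large by the standard property of Arthur's truncation, and $[\SL_2]$ has finite volume; hence the integral converges absolutely. The case $\dim V_0=0$ I would dispose of directly: there $\rho_0$ is trivial and $\Theta_f\equiv f(0)$, so for $T$ large $\Theta_f^T(g)=f(0)\bigl(1-\sum_{\gamma\in B(F)\backslash\SL_2(F)}\one_{>T}(H_B(\gamma g))\bigr)$; unfolding $\int_{[\SL_2]}\sum_{\gamma}\one_{>T}(H_B(\gamma g))\,dg$ over $B(F)\backslash\SL_2(\A_F)$ produces a constant times $e^{-2T}$, whence $\int_{[\SL_2]}\Theta_f^T(g)\,dg=f(0)\bigl(\mathrm{meas}([\SL_2])-c\,e^{-2T}\bigr)\to\kappa f(0)$.

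Assume now $\dim V_0>0$ and fix $\Phi$ with $\widehat\Phi(0)\neq 0$. Set $R(s,T):=\int_{[\SL_2]}\Theta_f^T(g)E(g,\Phi_s)\,dg$; this converges for all $s$ (rapidly decreasing against moderate growth over a finite-volume space), is meromorphic, and $\mathrm{Res}_{s=1}R(s,T)=\tfrac{\widehat\Phi(0)}{2}\int_{[\SL_2]}\Theta_f^T(g)\,dg$ because $\mathrm{Res}_{s=1}E(g,\Phi_s)=\widehat\Phi(0)/2$ is independent of $g$. For $\mathrm{Re}(s)$ large I would unfold $E(g,\Phi_s)=\sum_{\gamma\in B(F)\backslash\SL_2(F)}\Phi_s(\gamma g)$ to get $R(s,T)=\int_{B(F)\backslash\SL_2(\A_F)}\Theta_f^T(g)\Phi_s(g)\,dg$. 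Since $\Phi_s$ is left $N(\A_F)$-invariant, integrating over $[N]$ replaces $\Theta_f^T$ by its constant term along $N$; and because $Q_0$ is anisotropic, every term $\rho_0(g)f(\xi)$ with $\xi\neq 0$ integrates to $0$ over $[N]$ (it carries the factor $\psi(t\,Q_0(\xi))$ with $Q_0(\xi)\in F^\times$), so the constant term of $\Theta_f$ reduces to $\Theta_{f,N}(g)=(\rho_0(g)f)(0)=\chi(\alpha)|\alpha|^{\dim V_0/2}(\rho_0(k)f)(0)$ in Iwasawa coordinates $g=na(\alpha)k$. Feeding this, together with the constant-term formula for a truncated automorphic form on $\SL_2$ (which, besides the main term $\one_{\le T}(H_B)\,\Theta_{f,N}$, contributes a second--Bruhat--cell piece $\int_{N(\A_F)}\one_{>T}(H_B(wng))\,\Theta_{f,N}(wng)\,dn$), into the $\A_F^\times$-integral, one finds it factors through $\int_{F^\times\backslash(\A_F^\times)^1}\chi(\beta)\,d^\times\beta$, which equals $\mathrm{meas}(F^\times\backslash(\A_F^\times)^1)$ when $\chi=1$ and $0$ when $\chi\neq 1$. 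This already gives $R(s,T)\equiv 0$, hence $\int_{[\SL_2]}\Theta_f^T(g)\,dg\equiv 0$, whenever $\chi\neq 1$ — in particular when $\dim V_0=2$, since an anisotropic binary form always has $\chi\neq 1$.

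When $\chi=1$ the surviving $\A_F^\times$-integrals are truncated integrals of Tate type; evaluating them and taking $\mathrm{Res}_{s=1}$ yields a polynomial in $e^T$, together with a term linear in $T$ when $\dim V_0=4$ (a case in which the second--Bruhat--cell integral has a $\zeta_F$-type divergence producing a double pole at $s=1$). The vanishing of the constant term then comes down to the fact that the main term $\one_{\le T}(H_B)\,\Theta_{f,N}$ contributes to $\mathrm{Res}_{s=1}R(s,T)$ only through a factor \emph{regular} at $s=1$, and so drops out, while the second--Bruhat--cell contribution, after taking the residue, is carried entirely by $e^{(\dim V_0/2-2)T}$ — a nonzero power of $e^T$ when $\dim V_0\ge 6$, hence with no $T$-independent part; for $\dim V_0=4$ one must additionally verify, via the double pole, that the simple-pole part of that contribution vanishes. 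I expect the main obstacle to be precisely this last point: executing the analytic continuation to $s=1$ of the second--Bruhat--cell integral and tracking its (possibly higher-order) pole carefully enough to see that no $T$-independent term is generated. The remaining ingredients — convergence, the anisotropy argument killing all $\xi\neq 0$, the $\chi\neq 1$ vanishing, and the $\dim V_0=0$ case — are comparatively routine.
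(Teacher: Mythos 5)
Your overall strategy — pair $\Theta_f^T$ with $E(\cdot,\Phi_s)$, pass the residue inside, unfold, use anisotropy to reduce the constant term of $\Theta_f$ to $(\rho_0(g)f)(0)$, and recognize that everything factors through $\int_{F^\times\backslash(\A_F^\times)^1}\chi$ — is precisely the paper's, and your treatments of convergence, $\dim V_0=0$, $\chi\neq 1$, and the $\dim V_0\neq 4$ polynomial computation are all fine.

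The gap is your handling of $\dim V_0=4$ with $\chi=1$, which you flag as ``the main obstacle'' and propose to close by carefully tracking the Laurent expansion at the double pole. That computation will not close it. If $\int_K\rho_0(k)f(0)\,dk\neq 0$, the second--Bruhat--cell contribution is a product of $\frac{e^{-2T(s-1)}}{2(s-1)}$ with a Tate integral having a genuine simple pole at $s=1$; expanding $e^{-2T(s-1)}=1-2T(s-1)+\cdots$, its residue at $s=1$ is a nonzero multiple of $T$. That is a $T$-linear term, which is not a polynomial in $e^T$, and it does not cancel against the first--cell piece (which is regular at $s=1$). So a ``careful tracking'' would disprove the lemma, not prove it. The resolution the paper uses is qualitative, and you already set up the needed input but do not exploit it: since $\Theta_f^T$ is rapidly decreasing and $E(\cdot,\Phi_s)$ has only a simple pole at $s=1$, the Jacquet--Shalika argument shows that $R(s,T)$ has at most a simple pole at $s=1$. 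A double pole is therefore impossible for any admissible $\Phi$, which forces $\int_K\rho_0(k)f(0)\,dk=0$ when $\dim V_0=4$ and $\chi=1$, and then both cells vanish. Without invoking that a priori pole bound to deduce this vanishing, the $\dim V_0=4$ case cannot be finished.
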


\begin{proof}
The lemma is easy to check when $\dim V_0=0.$  Thus for the remainder of the proof we assume $\dim V_0 >0.$
Assume that $\Phi \in \mathcal{S}(\A_F^2)$ and  $\Phi(\xi'k)=\Phi(\xi')$ for all $(\xi',k) \in  F^2 \times K.$  Assume moreover that $\widehat{\Phi}(0) \neq 0.$
Then by the comments before the statement of Lemma \ref{lem:0} we have 
$$
\frac{\widehat{\Phi}(0)}{2}\int_{[\SL_2]}\Theta_{f}^T(g)dg=\int_{[\SL_2]}\mathrm{Res}_{s=1}E(g,\Phi_s)\Theta_f^T(g)dg.
$$
The function $\Theta_f^T(g)$ is rapidly decreasing on $[\SL_2]$ (see \cite{Arthur:TFII}) and hence using  \cite[Lemma 4.2]{JacquetShalikaEPI} we deduce that the above is equal to 
\begin{align}
    \mathrm{Res}_{s=1}\int_{[\SL_2]}E(g,\Phi_s)\Theta_f^T(g)dg.
\end{align}
We also deduce that at $s=1$ the function $\int_{[\SL_2]}E(g,\Phi_s)\Theta_f^T(g)dg$ has at most a simple pole.
One has
\begin{align*}
\Theta_f^T(g)&=\Theta_f(g)-\sum_{\gamma \in B(F) \backslash \SL_2(F)} \one_{> T}(H_B(\gamma g))\int_{[N]} \Theta_f(n\gamma g)dn\\
&=\Theta_f(g)-\sum_{\gamma \in B(F) \backslash \SL_2(F)}\one_{>T}(H_B(\gamma g))\rho_0(\gamma g)f(0).
\end{align*}
Thus for $\mathrm{Re}(s)$ sufficiently large,
\begin{align*}
&\int_{[\SL_2]}E(g,\Phi_s)\Theta_f^T(g)dg\\
&=\int_{[\SL_2]} \sum_{\gamma' \in B(F) \backslash \SL_2(F)} \Phi_s(\gamma' g)\Big(\Theta_f(g)-\sum_{\gamma \in B(F) \backslash \SL_2(F)}\one_{>T}(H_B(\gamma g))\rho_0(\gamma g)f(0)\Big)dg\\
&=\int_{B(F) \backslash \SL_2(\A_F)}  \Phi_s(g)\Big(\Theta_f(g)-\sum_{\gamma \in B(F) \backslash \SL_2(F)}\one_{>T}(H_B(\gamma g))\rho_0(\gamma g)f(0)\Big)dg\\
&=\int_{T_2(F)N(\A_F) \backslash \SL_2(\A_F)} \Phi_s(g)\int_{[N]}\Big(\Theta_f(ng)-\sum_{\gamma \in B(F) \backslash \SL_2(F)}\one_{>T}(H_B(\gamma n g)\rho_0(\gamma n g)f(0))\Big)dn d\dot{g}.
\end{align*}
The manipulations here are justified since $E(g,\Phi_s)$ is of moderate growth for $\mathrm{Re}(s)$ large \cite[Lemma 4.2]{JacquetShalikaEPI} and $\Theta_f^T(g)$ is rapidly decreasing.

Using \eqref{20} in \S \ref{ssec:WR} the above is
 \begin{align} \label{before:2} \begin{split}
 \int_{T_2(F)N(\A_F) \backslash \SL_2(\A_F)} \Phi_s(g)&\Big(\rho_0(g)f(0)\one_{\leq T}(H_B(g))\\&-\int_{N(\A_F)}\one_{>T}(H_B(\begin{psmatrix} & 1 \\ -1 & \end{psmatrix} n g))\rho_0(\begin{psmatrix} & 1 \\ -1 & \end{psmatrix} n g)f(0)\Big)dn d\dot{g}. \end{split}
\end{align}
We break \eqref{before:2} into two summands.  The first is
\begin{align} \label{first:is}
\int_{T_2(F)N(\A_F) \backslash \SL_2(\A_F)} &\one_{\leq T}(H_B(g))\Phi_s(g)\rho_0(g)f(0)d\dot{g}.
\end{align}
For $(a,k) \in \A_F^\times \times K$ one has $\Phi_s(\begin{psmatrix}a & \\ & a^{-1} \end{psmatrix}k)=|a|^{2s}\Phi_s(I_2).$  Thus for $\mathrm{Re}(s)$ sufficiently large  \eqref{first:is} is
\begin{align*}
    &\Phi_s(I_2)\int_{[\GG_m] \times K} \one_{\leq T}(\log|a|)|a|^{2s+\dim V_0/2}\chi(a)\rho_0(k)f(0)\frac{d^\times a}{|a|^2}dk\\
   &= \Phi_s(I_2) \int_{F^\times \backslash (\A_F^\times)^1}\chi(u)d^\times u \int_K \rho_0(k)f(0)dk \int_{0}^{e^T}|r|^{2s+\dim V_0/2-3}dr.
\end{align*}

This vanishes unless $\chi=1.$  Assuming $\chi =1$ and $\mathrm{Re}(s)$ is sufficiently large we see that it is equal to
\begin{align*}
\Phi_s(I_2)\mathrm{meas}(F^\times \backslash (\A_F^\times)^1)\int_{K}\rho_0(k)f(0)dk \frac{e^{T(2s+\dim V_0/2-2)}}{2s+\frac{\dim V_0}{2}-2}.
\end{align*}
The residue of this expression at $s=1$ is zero for $\dim V_0>0.$

The second summand of \eqref{before:2} is 
\begin{align*}
&-\int_{T_2(F)N(\A_F) \backslash \SL_2(\A_F)} \Phi_s\left(g \right)\int_{N(\A_F)}\rho_0(\begin{psmatrix} & 1 \\ -1 & \end{psmatrix} n g)f(0)\one_{>T}(H_B(\begin{psmatrix} & 1 \\ -1 & \end{psmatrix}n g))dnd\dot{g}\\
&=-\int_{T_2(F) \backslash \SL_2(\A_F)} \Phi_s(g)\rho_0(\begin{psmatrix} & 1 \\ -1 & \end{psmatrix}  g)f(0)\one_{>T}(H_B(\begin{psmatrix} & 1 \\ -1 & \end{psmatrix} g))dg\\
&=-\int_{T_2(F) \backslash \SL_2(\A_F)} \Phi_s(\begin{psmatrix} & -1 \\ 1 & \end{psmatrix}g)\rho_0(g)f(0)\one_{>T}(H_B(g))dg\\
&= -\int_{K}\int_{[\GG_m]}\int_{\A_F}\int_{\A_F^\times}\Phi\begin{psmatrix} ta & txa^{-1} \end{psmatrix} |t|^{2s}d^\times t
|a|^{\dim V_0/2}\chi(a)
\rho_0(k)f(0)\one_{>T}(\log |a|)\frac{dxd^\times a dk}{|a|^2}.
\end{align*}
We change variables $t \mapsto ta^{-1}$ and then $x \mapsto t^{-1}xa^2$ to see that this is 
\begin{align*}
   & -\int_{K}\int_{[\GG_m]}\int_{\A_F}\int_{\A_F^\times}\Phi\begin{psmatrix} t & x \end{psmatrix} |t|^{2s-1}d^\times t
|a|^{\dim V_0/2-2s}\chi(a)
\rho_0(k)f(0)\one_{>T}(\log |a|)dxd^\times a dk\\
&= -\int_{K}\rho_0(k)f(0)dk
\int_{F^\times \backslash (\A_F^\times)^1} \chi(u)d^\times u
 \int_{e^T}^\infty r^{\dim V_0/2-2s-1}dr \int_{\A_F}\int_{\A_F^\times}\Phi\begin{psmatrix} t & x \end{psmatrix} |t|^{2s-1}d^\times t
dx.
\end{align*}
This vanishes unless $\chi$ is trivial, in which case it is equal to 
\begin{align} \label{before:res}\begin{split}
    -&\int_{K}\rho_0(k)f(0)dk\mathrm{meas}(F^\times \backslash (\A_F^\times)^1) \int_{e^T}^\infty r^{\dim V_0/2-2s-1}dr \int_{\A_F}\int_{\A_F^\times}\Phi\begin{psmatrix} t & x \end{psmatrix} |t|^{2s-1}d^\times t
dx \\
&=\int_{K}\rho_0(k)f(0)dk\mathrm{meas}(F^\times \backslash (\A_F^\times)^1) 
\frac{e^{T(\dim V_0/2-2s)}}{\frac{\dim V_0}{2}-2s}
 \int_{\A_F}\int_{\A_F^\times}\Phi\begin{psmatrix} t & x \end{psmatrix} |t|^{2s-1}d^\times t
dx. \end{split}
\end{align}
Assume for the moment that $\dim V_0=4.$  If $\int_{K}\rho_0(k)f(0)dk =0$ then this expression vanishes.  If $\int_{K}\rho_0(k)f(0)dk \neq 0$ then this expression has a pole of order $2$ at $s=1$ for suitably chosen $\Phi.$  This contradicts the fact that $\int_{[\SL_2]}E(g,\Phi_s)\Theta_f^T(g)dg$ has at most a simple pole at $s=1.$  Thus if $\dim V_0=4$ we are done.

Assume $\dim V_0 \neq 4.$  Then the pole of \eqref{before:res} at $s=1$ is simple with residue equal to 
$$
\mathrm{meas}(F^\times \backslash (\A_F^\times)^1) \int_K \rho_0(k)f(0)dk \frac{\widehat{\Phi}(0)e^{T(\dim V_0/2-2)}}{2(\frac{\dim V_0}{2}-2)}.
$$
This is a polynomial in $e^T$ when $\dim V_0>4$ and a polynomial in $e^{-T}$ when $0<\dim V_0<4.$  In either case the constant term is term $0.$
\end{proof}

\section{Proof of Theorem \ref{thm:main}}
\label{sec:proof}

For $i> 0$ we define a linear form
\begin{align} \label{ci:def} 
c_i:\mathcal{S}(V_i(\A_F) \oplus \A_F^2) \lto \CC
\end{align}
by
\begin{align*}
c_i(f):=\begin{cases} Z_{r_i}(f,2-\frac{\dim V_i}{2})& \textrm{ if } Z_{r_i}(f,s) \textrm{ is holomorphic at }2-\frac{\dim V_i}{2}\\
\lim_{s \to 0}\frac{d}{ds}(sZ_{r_i}(f,s+2-\frac{\dim V_i}{2})) &\textrm{ if }Z_{r_i}(f,s) \textrm{ has a pole at }2-\frac{\dim V_i}{2}.
\end{cases}
\end{align*}

\begin{lem} \label{lem:const}
There is a polynomial $p_{f,0}(y_1,y_2) \in \CC[y_1,y_2]$ such that 
\begin{align*}
\lim_{T \to \infty}\left(\sum_{s_i \in \left\{\frac{\dim V_i}{2}-1, \frac{\dim V_i}{2}-2,0\right\}}  \mathrm{Res}_{s=s_i}\frac{ e^{Ts}Z_{r_i}(f,s+2-\tfrac{\dim V_i}{2})}{s}-p_{f,0}(T,e^T)\right)=0.
\end{align*} 
 The constant term of $p_{f,0}(y_1,y_2)$  is $c_i(f)$.
\end{lem}

\begin{proof}
By Lemma \ref{lem:Tate} $Z_{r_i}(f,s)$ is a Tate integral.  Thus it is meromorphic, and in fact holomorphic apart from possible simple poles at $s\in \{0,1\}.$  The lemma follows.
\end{proof}

  For $i >i' \geq 0$ we let
\begin{align} \label{di}
d_{i,i'}=d_{i'+1} \circ \dots \circ d_{i-1} \circ d_{i}:\mathcal{S}(V_i(\A_F) \oplus \A_F^2) \lto \mathcal{S}(V_{i'}(\A_F) \oplus \A_F^2),
\end{align}
where $d_i$ is defined as in \eqref{di0}.
By convention, $d_{i,i}$ is the identity.  

\begin{proof}[Proof of Theorem \ref{thm:minimal}]
By Theorem \ref{thm:dim:red} we have 
\begin{align*}
\int_{[\SL_2]}&\Theta_f^T(g)dg=\sum_{\xi \in X_{\ell}^{\circ}(F)}I(\mathcal{F}_2(f))(\xi)+\int_{[\SL_2]}\Theta_{d_i(f)}^T(g)dg\\&+\left(
\sum_{s_\ell \in \left\{\frac{\dim V_\ell}{2}-1, \frac{\dim V_\ell}{2}-2,0\right\}}  \mathrm{Res}_{s=s_\ell}\frac{ e^{Ts}Z_{r_\ell}(\mathcal{F}_2(f),s+\tfrac{\dim V_\ell}{2}-2)}{s}
\right)+o_f(1).
\end{align*}
By induction on $\ell$ we obtain 
\begin{align*}
\int_{[\SL_2]}&\Theta_f^T(g)dg=\int_{[\SL_2]}\Theta_{d_{\ell,0}(\mathcal{F}_2(f))}^T(g)dg+\sum_{i=1}^\ell\Bigg(\sum_{\xi \in X_i^{\circ}(F)}I(d_i(\mathcal{F}_2(f)))(\xi)+\\&+\left(
\sum_{s_i \in \left\{\frac{\dim V_i}{2}-1, \frac{\dim V_i}{2}-2,0\right\}}  \mathrm{Res}_{s=s_i}\frac{ e^{Ts}Z_{r_i}(\mathcal{F}_2(f),s+2-\tfrac{\dim V_i}{2})}{s}
\right)\Bigg)+o_f(1).
\end{align*}
We now conclude using Lemma \ref{lem:0} and Lemma \ref{lem:const}.
\end{proof}

\begin{proof}[Proof of Theorem \ref{thm:main}]
By Poisson summation and the fact that $\mathcal{F}_{X_\ell}$ is $\SL_2(\A_F)$-invariant we have
$$
\sum_{\xi \in V_i(F) \oplus F^2} r_i(g)f(\xi)=\sum_{\xi \in V_i(F) \oplus F^2} \mathcal{F}_{X_\ell}(r_i(g)f)(\xi)=\sum_{\xi \in V_i(F) \oplus F^2} r_i(g)\mathcal{F}_{X_\ell}(f)(\xi).
$$
By \eqref{F2:PS} this implies we have 
$$
\Theta_{\mathcal{F}_2^{-1}(f)}(g)=\Theta_{\mathcal{F}_2^{-1}(\mathcal{F}_{X_\ell} (f))}(g).
$$ 
 Thus
$$
\int_{[\SL_2]}\Theta_{\mathcal{F}_2^{-1}(f)}^T(g)dg =\int_{[\SL_2]}\Theta_{\mathcal{F}_2^{-1}(\mathcal{F}_{X_\ell}(f))}^T(g)dg.
$$
We conclude using Theorem \ref{thm:minimal}. 
\end{proof}

In applications of Poisson summation the behavior of the functions involved under scaling plays a key role.  Since this takes some thought to work out we make it explicit:
\begin{cor} \label{cor:scaling} Assume $\ell>0$, and that either $\dim V_\ell>4$ or $\chi \neq 1$.  
For $a \in \A_F^\times$ and $f \in \mathcal{S}(V_{\ell}(\A_F) \oplus \A_F^2)$ one has 
\begin{align*}
&|a|^{1-\dim V_i/2}\chi(a)c_\ell(f)+\chi(a)|a|^{1-\dim V_i/2}\sum_{\xi \in X_{\ell}^{\circ}(F)}I(f)(a^{-1}\xi)\\&+
|a|^{-1}\sum_{i=1}^{\ell-1}\left( c_i(d_{\ell,i}(f))+\sum_{\xi \in X^{\circ}_i(F)}I(d_{\ell,i}(f))(\xi)\right)+|a|^{-1}\kappa d_{\ell,0}(f)(0_{V_0},0,0)\\=&
|a|^{\dim V_i/2-1}\chi(a)c_\ell(\mathcal{F}_{X_\ell}(f))+\chi(a)|a|^{\dim V_i/2-1}\sum_{\xi \in X_{\ell}^{\circ}(F)}I\left(\mathcal{F}_{X_\ell}( f)\right)(a\xi)\\&+|a|\sum_{i=1}^{\ell-1}  \left(c_i(d_{\ell,i}(\mathcal{F}_{X_\ell}(f)))+\sum_{\xi \in X^{\circ}_i(F)}I(d_{\ell,i}(\mathcal{F}_{X_\ell}(f)))(\xi)\right)+|a|\kappa d_{\ell,0}(\mathcal{F}_{X_\ell}(f))(0_{V_0},0,0).
\end{align*}
\end{cor}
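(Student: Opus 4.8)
The plan is to deduce the corollary from Theorem~\ref{thm:main} itself, applied not to $f$ but to the scaled test function $\sigma_\ell(g_a)f$, where
\[
g_a:=\begin{psmatrix} I_{V_\ell} & & \\ & a & \\ & & a^{-1}\end{psmatrix}\in \mathrm{O}_{V_{\ell+1}}(\A_F)\subset \mathrm{GO}_{V_{\ell+1}}(\A_F),
\]
and then to recognize the two sides of the resulting identity as the two sides of the corollary. All the bookkeeping needed is the interaction of $\sigma_\ell(g_a)$ with the operators $d_{\ell,i}$, $I$, $c_\ell$ and $\mathcal{F}_{X_\ell}$, and this is already packaged in the computations of \S\ref{sec:loc}, whose proofs carry over verbatim to $\A_F$.

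First I would handle the left-hand side. From the explicit formula \eqref{a:act} one has $\sigma_\ell(g_a)f(\xi,0,0)=|a|^{-1}f(\xi,0,0)$, hence $d_\ell(\sigma_\ell(g_a)f)=|a|^{-1}d_\ell(f)$ and therefore $d_{\ell,i}(\sigma_\ell(g_a)f)=|a|^{-1}d_{\ell,i}(f)$ for every $0\le i<\ell$; this accounts for the factor $|a|^{-1}$ on all terms indexed by $i<\ell$ and on the $\kappa$-term. For the $i=\ell$ terms, the global analogue of \eqref{2} gives
\[
\sum_{\xi\in X_\ell^{\circ}(F)}I(\sigma_\ell(g_a)f)(\xi)=\chi(a)|a|^{1-\dim V_\ell/2}\sum_{\xi\in X_\ell^{\circ}(F)}I(f)(a^{-1}\xi),
\]
while the computation in Lemma~\ref{lem:Z} (via the Tate-integral description of Lemma~\ref{lem:Tate}) gives $Z_{r_\ell}(\sigma_\ell(g_a)f,s)=\chi(a)|a|^{s-1}Z_{r_\ell}(f,s)$ as meromorphic functions. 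Here the standing hypothesis enters: when $\chi\ne 1$ the integral $Z_{r_\ell}(\cdot,s)$ is entire, and when $\chi=1$ the assumption $\dim V_\ell>4$ forces $2-\dim V_\ell/2\notin\{0,1\}$; in either case $Z_{r_\ell}(\cdot,s)$ is holomorphic at $s=2-\dim V_\ell/2$, so $c_\ell$ is just the value there and $c_\ell(\sigma_\ell(g_a)f)=\chi(a)|a|^{1-\dim V_\ell/2}c_\ell(f)$. Assembling these, the left-hand side of Theorem~\ref{thm:main} for $\sigma_\ell(g_a)f$ is exactly the left-hand side of the corollary.

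Next I would handle the right-hand side. By \eqref{4} the Fourier transform $\mathcal{F}_{X_\ell}$ equals $\sigma_\ell(w)$ with $w:=\begin{psmatrix} I_{V_\ell} & & \\ & & 1 \\ & 1 & \end{psmatrix}$, and the one-line matrix identity $wg_a=g_{a^{-1}}w$ shows $\mathcal{F}_{X_\ell}(\sigma_\ell(g_a)f)=\sigma_\ell(g_{a^{-1}})\mathcal{F}_{X_\ell}(f)$. Substituting this into the right-hand side of Theorem~\ref{thm:main} for $\sigma_\ell(g_a)f$ and applying the three scaling relations of the previous paragraph with $a$ replaced by $a^{-1}$ and $f$ replaced by $\mathcal{F}_{X_\ell}(f)$ (using $\chi(a^{-1})=\chi(a)$) produces precisely the right-hand side of the corollary. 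Since Theorem~\ref{thm:main} equates the two sides for the test function $\sigma_\ell(g_a)f$, the corollary follows, and the absolute convergence of every sum appearing is inherited from that theorem.

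I expect the only genuinely delicate point to be the behaviour of $c_\ell$ under scaling: one must be sure that, under the standing hypothesis, $c_\ell(\cdot)$ is the naive value $Z_{r_\ell}(\cdot,2-\dim V_\ell/2)$ rather than the regularized derivative value, since in the excluded cases $\dim V_\ell\in\{2,4\}$ with $\chi=1$ the scaling of a pole would contribute a spurious $\log|a|$ term and the clean homogeneity above would fail. Everything else is a routine change of variables built on \eqref{a:act}, \eqref{2} and the $r_\ell(\SL_2(\A_F))$-equivariance of $\mathcal{F}_{X_\ell}$.
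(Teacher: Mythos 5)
Your proposal is correct and follows essentially the same route as the paper: apply Theorem~\ref{thm:main} to $\sigma_\ell(g_a)f$, use the scaling formulae from Proposition~\ref{prop:action} and Lemma~\ref{lem:Z} for the $i=\ell$ terms, the $|a|^{\mp1}$-homogeneity of $d_\ell$ under $\sigma_\ell(g_a)$ for the $i<\ell$ terms, and the commutation $w g_a = g_{a^{-1}} w$ to transfer the computation across $\mathcal{F}_{X_\ell}$. Your explicit remark that the hypothesis $\dim V_\ell>4$ or $\chi\ne 1$ is precisely what guarantees $Z_{r_\ell}(\cdot,s)$ is holomorphic at $s=2-\dim V_\ell/2$ (so $c_\ell$ is the plain value, not the regularized limit, and scales cleanly) is correct and is the tacit reason for the standing assumption in the paper.
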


\begin{proof}
Let $f \in \mathcal{S}(V_{\ell}(\A_F) \oplus \A_F^2)$.
By \eqref{2} of Proposition \ref{prop:action} we have
\begin{align} \label{1:LHS}
I\left(\sigma_\ell\begin{psmatrix} I_{V_\ell} & & \\ & a & \\ & & a^{-1} \end{psmatrix}f\right)(\xi)=\chi(a)|a|^{1-\dim V_\ell /2}I(f)(a^{-1}\xi)
\end{align}
and by \eqref{2} and \eqref{4} of Proposition \ref{prop:action} we have
\begin{align} \label{1:RHS} \begin{split}
I\left(\mathcal{F}_{X_\ell}\left(\sigma_\ell\begin{psmatrix}  I_{V_\ell}  & & \\ & a & \\ & & a^{-1}  \end{psmatrix} \right)f\right)(\xi)&=I\left(\sigma_\ell\left( \begin{psmatrix} I_{V_\ell}  & & \\ & & 1 \\ & 1& \end{psmatrix}\begin{psmatrix}  I_{V_\ell} & &\\ & a & \\ & & a^{-1}  \end{psmatrix} \right)f\right)(\xi)\\
&=I\left(\sigma_i\left( \begin{psmatrix} I_{V_\ell} & & \\ & a^{-1} & \\ & & a  \end{psmatrix}\begin{psmatrix}I_{V_\ell} & & \\  & & 1 \\ & 1 & \end{psmatrix} \right)f\right)(\xi)\\
&=I\left(\sigma_\ell\begin{psmatrix}  I_{V_\ell}& & \\& a^{-1} &\\ & & a  \end{psmatrix}\mathcal{F}_{X_\ell} (f)\right)(\xi)\\
&=\chi(a)|a|^{\dim V_\ell/2-1}I\left(\mathcal{F}_{X_\ell} (f)\right)(a\xi).
 \end{split}
\end{align}
Similarly by Lemma \ref{lem:Z}
\begin{align} \label{ci:scale} \begin{split}
c_\ell\left(\sigma_\ell\begin{psmatrix}I_{V_\ell} & & \\ & a & \\ & & a^{-1} \end{psmatrix}f\right)&=|a|^{1-\dim V_\ell/2}\chi(a)c_\ell(f),\\
c_\ell\left(\mathcal{F}_{X_\ell}\left(\sigma_\ell\begin{psmatrix}  I_{V_\ell} & &\\ & a &  \\ & & a^{-1}  \end{psmatrix} \right)f\right)&=|a|^{\dim V_\ell/2-1}\chi(a)c_\ell(\mathcal{F}_{X_\ell}(f)). \end{split}
\end{align}
On the other hand \eqref{a:act} implies
\begin{align} \label{d:scale}\begin{split}
d_{\ell}\left(\sigma_\ell\begin{psmatrix}  I_{V_\ell} & & \\ & a & \\ & & a^{-1} \end{psmatrix}f\right)&=|a|^{-1}d_{\ell}(f), \\ d_{\ell}\left(\mathcal{F}_{X_\ell}\left(\sigma_\ell\begin{psmatrix}  I_{V_\ell} & & \\ & a & \\ & & a^{-1} \end{psmatrix}\right)f\right)&=|a|d_{\ell}(\mathcal{F}_{X_\ell}(f)).\end{split}
\end{align}
Thus applying Theorem \ref{thm:main} to the function $\sigma_i\begin{psmatrix} I_{V_i}  & & \\ & a &\\ & & a^{-1} \end{psmatrix}f$ we arrive at the asserted identity.  
\end{proof}
\noindent There is an analogue of Corollary \ref{cor:scaling} that is valid in the case $\dim V_\ell=4$ and $\chi=1.$  We omit it because it is slightly cumbersome to state.

\section{Invariance and the Schwartz space  $\mathcal{S}(X_\ell(\A_F))$} \label{sec:invariance}

The summation formula in Theorem \ref{thm:main}
is phrased in terms of functionals on $\mathcal{S}(V_{\ell}(\A_F) \oplus \A_F^2).$  
Define
\begin{align}
\mathcal{S}(X_{i}(\A_F)):=\mathcal{S}(V_i(\A_F) \oplus \A_F^2)_{r_i(\SL_2(\A_F))}.
\end{align}
We have a commutative diagram
\begin{equation*}
\begin{tikzcd}
    \mathcal{S}(V_i(\A_F) \oplus \A_F^2) \arrow[rd,"I"] \arrow[r] &\mathcal{S}(X_i(\A_F)) \arrow[d]\\
    & C^\infty(X_i^\circ(\A_F))
\end{tikzcd}
\end{equation*}
where the horizontal arrow is the canonical surjection.  By a slight abuse of notation we continue to denote by $I$ the map on the right.  We have already explained in the introduction after \eqref{SX} how it is reasonable to regard $\mathcal{S}(X_i(\A_F))$ as the Schwartz space of $X_i(\A_F),$ despite the fact that it is only $I(\mathcal{S}(X_i(\A_F)))$ that is a  space of functions on $X_i^\infty(\A_F).$  To further justify this, in this section we reformulate Theorem \ref{thm:main} in terms of $\mathcal{S}(X_i(\A_F)).$  The main result is Corollary \ref{cor:main2} below.

For this section we take the convention that $c_{i}=0$ if $i \leq 0$.
\begin{thm} \label{thm:inv}
Assuming $\dim V_i \not \in \{2,4\},$ the functionals $c_i$ are  $r_i(\SL_2(\A_F))$-invariant.  If $\dim V_i=4$ then 
\begin{align*}
\mathcal{S}(V_i(\A_F) \oplus \A_F^2)& \lto \CC\\
f &\longmapsto c_i(f)+c_{i-1}(d_i(f))
\end{align*}
is $r_i(\SL_2(\A_F))$-invariant.  
\end{thm}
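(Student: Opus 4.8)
The idea is to realize each $c_i$ as part of the constant term in the asymptotic expansion of a truncated theta integral, and to show that this constant term is insensitive to right translation of the theta function except through intermediate quadratic spaces of dimension $4$.

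First I would reduce to theta functions. By Lemma \ref{lem:F2:equiv} the partial Fourier transform $\mathcal{F}_2\colon\mathcal{S}(V_{i+1}(\A_F))\to\mathcal{S}(V_i(\A_F)\oplus\A_F^2)$ is a bijection with $\mathcal{F}_2\circ\rho_{i+1}(h)=r_i(h)\circ\mathcal{F}_2$, so it suffices to show $c_i(\mathcal{F}_2(\rho_{i+1}(h)f))$ is independent of $h\in\SL_2(\A_F)$ for $f\in\mathcal{S}(V_{i+1}(\A_F))$. Taking constant terms in Theorem \ref{thm:dim:red} and invoking Lemma \ref{lem:const} yields
\[
c_i(\mathcal{F}_2(f))=\mathrm{CT}\Big(\int_{[\SL_2]}\Theta_f^T(g)\,dg\Big)-\sum_{\xi\in X_i^{\circ}(F)}I(\mathcal{F}_2(f))(\xi)-\mathrm{CT}\Big(\int_{[\SL_2]}\Theta^T_{\mathcal{F}_2(f)(\cdot,0,0)}(g)\,dg\Big),
\]
where $\mathrm{CT}$ denotes the constant term of the polynomial-in-$(e^T,T)$ expansion. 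The middle term is unchanged under $f\mapsto\rho_{i+1}(h)f$ since $\mathcal{F}_2(\rho_{i+1}(h)f)=r_i(h)\mathcal{F}_2(f)$ and $\sum_{\xi\in X_i^{\circ}(F)}I(\cdot)(\xi)$ is $\SL_2(\A_F)$-invariant (the measure defining $I$ is right invariant). For the two $\mathrm{CT}$ terms I use $\Theta_{\rho_{i+1}(h)f}=\Theta_f(\cdot\,h)$ and, because the maps $d_i$ are $\SL_2(\A_F)$-intertwining, $\mathcal{F}_2(\rho_{i+1}(h)f)(\cdot,0,0)=\rho_i(h)\bigl(\mathcal{F}_2(f)(\cdot,0,0)\bigr)$; both reduce to the next point.

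The key lemma is: for the theta function $\Theta_\phi(g)=\sum_{\xi}\rho_W(g)\phi(\xi)$ attached to an even-dimensional quadratic space $W$ (with $\rho_W$ the associated Weil representation) and for $h\in\SL_2(\A_F)$, the difference $\int_{[\SL_2]}\Lambda^T\bigl(\Theta_\phi(\cdot\,h)\bigr)(g)\,dg-\int_{[\SL_2]}\Lambda^T\Theta_\phi(g)\,dg$ is, as $T\to\infty$, a polynomial in $e^T$ with vanishing constant term provided $\dim W\neq4$, and vanishes identically when $\chi\neq1$. To see this: the second integral equals $\int_{[\SL_2]}(\Lambda^T\Theta_\phi)(gh)\,dg$ by right invariance of Haar measure; as $\Lambda^T$ fails to commute with right translation only through the cutoff $\one_{>T}(H_B(\cdot))$, the difference unfolds to $\int_{B(F)\backslash\SL_2(\A_F)}\bigl(\one_{>T}(H_B(g))-\one_{>T}(H_B(gh^{-1}))\bigr)(\Theta_\phi)_N(g)\,dg$, where $(\Theta_\phi)_N$ is the constant term along $N$. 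Writing $g=bk$ with $b\in B(\A_F)$, $k\in K$, one has $H_B(gh^{-1})=H_B(g)+H_B(kh^{-1})$, so the integrand is supported on a bounded strip $|H_B(g)-T|\leq C(h)$; on this strip, since $\Theta_\phi^T$ is rapidly decreasing (as in the proof of Lemma \ref{lem:0}), the constant term $(\Theta_\phi)_N(g)$ equals $\chi(t)|t|^{\dim W/2}[\rho_W(k)\phi](0)$ --- $t$ the $T_2(\A_F)$-coordinate of $g$, so $|t|=e^{H_B(g)}$ --- plus a term decreasing rapidly as $|t|\to\infty$. Carrying out the remaining torus integral over the strip then gives, for $\chi=1$, a term $\mathrm{const}\cdot e^{T(\dim W/2-2)}\int_K[\rho_W(k)\phi](0)\,H_B(kh^{-1})\,dk+o_\phi(1)$, a monomial of degree $\dim W/2-2$, which has no constant term in $T$ exactly when $\dim W\neq4$; for $\chi\neq1$ the torus integral over each fiber of $|\cdot|$ on $F^\times\backslash\A_F^\times$ vanishes.

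If $\dim V_i\notin\{2,4\}$ then $\dim V_{i+1}\neq4$ (as $\dim V_i\neq2$) and $\dim V_i\neq4$, so the lemma, applied with $W=V_{i+1}$ and with $W=V_i$, shows both $\mathrm{CT}$ terms are invariant under $f\mapsto\rho_{i+1}(h)f$, whence $c_i$ is $r_i(\SL_2(\A_F))$-invariant. If $\dim V_i=4$, the first $\mathrm{CT}$ term is still invariant ($W=V_{i+1}$ has dimension $6$), but the second ($W=V_i$, dimension $4$) contributes a residual constant $-D$ with $D=\mathrm{const}\cdot\int_K[\rho_i(k)\psi](0)\,H_B(kh^{-1})\,dk$ and $\psi:=\mathcal{F}_2(f)(\cdot,0,0)$, so $c_i(r_i(h)\mathcal{F}_2(f))-c_i(\mathcal{F}_2(f))=-D$. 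Running the same argument for $c_{i-1}$ with input $d_i(\mathcal{F}_2(f))=\mathcal{F}_2(\psi)$, and using that $d_i$ intertwines $r_i$ with $r_{i-1}$: the role of the first $\mathrm{CT}$ term is now played by the $V_i$-theta (dimension $4$), producing $+D'$, and the role of the second by the $V_{i-1}$-theta (dimension $2\neq4$), producing nothing; since $D'$ is the very same strip integral for $W=V_i$ with the same test function $\psi$, one has $D'=D$ and the two defects cancel, so $f\mapsto c_i(f)+c_{i-1}(d_i(f))$ is $r_i(\SL_2(\A_F))$-invariant. The main obstacle is the analysis of $(\Theta_\phi)_N$ on the truncation strip --- establishing the rapid decay of its $\xi\neq0$ part and identifying the precise $e^T$-monomial --- together with the bookkeeping identity $D=D'$ in the dimension-$4$ case.
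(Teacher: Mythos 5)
Your proposal is correct in substance but follows a genuinely different route from the paper. The paper's argument is short and formal: for $\dim V_i\notin\{2,4\}$, it uses the Tate functional equation (Lemma \ref{lem:FE}) to rewrite $c_i(f)=Z_{r_i}(f,2-\tfrac{\dim V_i}{2})$ as $Z_{\rho_{i+1}}(\mathcal{F}_2^{-1}(f),\tfrac{\dim V_i}{2}-1)$, and observes that at this value of $s$ the factor $e^{H_B(g)(1-\dim V_i/2+s)}$ in the defining integral is identically $1$, so invariance is manifest from the right-invariance of the quotient measure (Lemma \ref{lem:EZ:case}). For $\dim V_i=4$, the paper simply invokes the already-proved Theorem \ref{thm:main} at level $\ell=i+1$ (so $\dim V_{i+1}=6$): the Poisson formula expresses the relevant combination of $c$'s, minus its $\mathcal{F}_{X_{i+1}}$-image, in terms of the $I$-sums, the $\kappa$-term, and $c_{i+1}$, all already known to be invariant (Lemma \ref{lem:invariance}); a tensor trick with $\Phi$ satisfying $\Phi(0,0)=1$, $\mathcal{F}_\wedge(\Phi)(0,0)=0$ then isolates $c_i+c_{i-1}\circ d_i$. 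Your approach instead goes back to the truncated theta integral via Theorem \ref{thm:dim:red} and Lemma \ref{lem:const}, and proves from scratch a lemma (not in the paper) that the defect of $\mathrm{CT}\bigl(\int_{[\SL_2]}\Lambda^T\Theta_\phi\bigr)$ under right translation by $h$ is a strip integral whose main term is a monomial $\mathrm{const}(h,\phi)\cdot e^{T(\dim W/2-2)}$, hence has vanishing constant term unless $\dim W=4$, with the two residual dimension-$4$ defects cancelling between $c_i$ and $c_{i-1}\circ d_i$. This is a valid and more self-contained argument: it does not invoke Theorem \ref{thm:main} and it explains mechanistically why dimension $4$ is special, and it proves directly that $c_i+c_{i-1}\circ d_i$ is invariant rather than first proving invariance of its difference with a Fourier image; the price is a more delicate analysis of the constant term along $N$ on the truncation strip. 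One small computational slip: for $\alpha:=\dim W/2-2\neq 0$ the inner $r$-integral over the strip yields $-\frac{e^{\alpha T}}{\alpha}\bigl(1-e^{-\alpha H_B(kh^{-1})}\bigr)$ rather than $e^{\alpha T}H_B(kh^{-1})$; your formula is only the $\alpha=0$ limit. Since only the $\alpha=0$ case contributes to the constant term, and the cancellation $D=D'$ uses precisely the $\alpha=0$ formula with the same $W=V_i$ and the same test function $\psi=\mathcal{F}_2(f)(\cdot,0,0)$, the conclusion is unaffected. Also ``vanishes identically when $\chi\neq1$'' should read ``is $o_\phi(1)$ when $\chi\neq1$'': the main term of the strip integral vanishes, but the rapidly decaying remainder does not vanish pointwise, only in the limit $T\to\infty$. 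Neither issue damages the argument.
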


We first prove a special case of Theorem \ref{thm:inv}:
\begin{lem} \label{lem:EZ:case}
If $\dim V_i \not \in \{2,4\}$ then $c_i$ is $r_i(\SL_2(\A_F))$-invariant. 
\end{lem}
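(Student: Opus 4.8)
The goal is to show that $c_i$ is $r_i(\SL_2(\A_F))$-invariant when $\dim V_i \notin \{2,4\}$. By definition \eqref{ci}, $c_i$ is built out of the Tate integral $Z_{r_i}(f,s)$ evaluated (or differentiated) at $s = 2 - \dim V_i/2$, so it suffices to understand how $Z_{r_i}(r_i(g)f,s)$ depends on $g$. The first step is to reduce to generators of $\SL_2(\A_F)$: by the Iwasawa (or Bruhat) decomposition it is enough to check invariance under $N(\A_F)$, under the diagonal torus $T_2(\A_F)$, and under the Weyl element $w = \begin{psmatrix} & 1 \\ -1 & \end{psmatrix}$. For the torus and unipotent pieces this is essentially already recorded: Lemma \ref{lem:Tate} expresses $Z_{r_i}(f,s)$ as $Z(\Psi_f, \chi|\cdot|^s)$ where $\Psi_f(x) = \int_K r_i(k)f(0,0,x)\,dk$, and the $K$-average together with the left-invariance of Haar measure on $N(\A_F)\backslash\SL_2(\A_F)$ kill the dependence on $N$ and $T_2$ (up to the character $\chi|\cdot|^s$ whose value at the relevant point one tracks). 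So the entire content is the Weyl element.

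\textbf{The Weyl element step.} Here I would use the functional equation Lemma \ref{lem:FE}, namely $Z_{r_i}(f,s) = Z_{\rho_{i+1}}(\mathcal{F}_2^{-1}(f), 1-s)$, together with Lemma \ref{lem:F2:equiv}, which says $\mathcal{F}_2$ intertwines $\rho_{i+1}$ and $r_i$. Since $\mathcal{F}_2^{-1}(r_i(g)f) = \rho_{i+1}(g)\mathcal{F}_2^{-1}(f)$, invariance of $c_i$ under $g$ is equivalent to invariance under $g$ of the corresponding functional attached to $Z_{\rho_{i+1}}$. The point is that $Z_{\rho_{i+1}}(\rho_{i+1}(w)\phi, s)$ can be computed directly: $\rho_{i+1}(w)$ is (up to the Weil index $\gamma$) the full Fourier transform on $\mathcal{S}(V_{i+1}(\A_F))$, so unfolding the Iwasawa integral and applying Poisson summation / Tate's local functional equation relates $Z_{\rho_{i+1}}(\rho_{i+1}(w)\phi,s)$ to $Z_{\rho_{i+1}}(\phi, s)$ times an explicit gamma factor — essentially the completed Tate $L$-factor $L(s,\chi)/L(1-s,\chi)$ evaluated at the relevant point. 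One then checks that at $s = 2 - \dim V_i/2$ (equivalently $1 - s = \dim V_{i+1}/2 - 1 = \dim V_i/2$ on the $\rho_{i+1}$ side) this gamma factor is $1$, so the functional is genuinely invariant. The role of the hypothesis $\dim V_i \notin \{2,4\}$ is precisely that it forces the relevant argument of the Tate $L$-function away from the points $s \in \{0,1\}$ where $Z_{r_i}(f,s)$ can have a pole: away from those points $Z_{r_i}(f,s)$ is holomorphic, $c_i$ is just the honest value $Z_{r_i}(f, 2-\dim V_i/2)$, and the functional equation argument goes through cleanly without the extra bookkeeping needed for the derivative-of-$sZ$ case.

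\textbf{Main obstacle.} The delicate point is handling the exceptional value $2 - \dim V_i/2 \in \{0,1\}$, i.e. $\dim V_i \in \{2,4\}$ — but that case is explicitly excluded in this lemma, which is exactly why it is stated separately from the general Theorem \ref{thm:inv}. So within the scope of Lemma \ref{lem:EZ:case} the only real work is bookkeeping the gamma factor in the Weyl-element computation and verifying it equals $1$ at the relevant point; this is a local computation at each place, reducing to the standard fact that the Tate gamma factor $\gamma(s,\chi_v,\psi_v)$ together with the Weil index of $Q_{i+1}$ conspires — by the basic compatibility between the Weil representation of $\SL_2$ and Fourier transform — so that the product over all places is trivial. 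I would also double-check that the $K$-finiteness built into $\Psi_f$ and the absolute convergence assertions from Lemma \ref{lem:conv} and Lemma \ref{lem:Tate} legitimize interchanging the $\SL_2$-translation with the contour/Tate manipulations; these are routine given the results already established in \S\ref{sec:loc}. A clean way to organize the whole argument is: (i) note $c_i$ descends through $\mathcal{F}_2$ to a functional on the $\rho_{i+1}(\SL_2)$-coinvariants of $\mathcal{S}(V_{i+1}(\A_F))$, (ii) on that space invariance under $N$ and $T_2$ is formal, and (iii) invariance under $w$ follows because $\rho_{i+1}(w)$ is the Fourier transform and the Tate integral defining $Z_{\rho_{i+1}}$ at the point $1 - s = \dim V_i/2$ is self-dual there when $\dim V_i \notin \{2,4\}$.
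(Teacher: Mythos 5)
Your plan and the paper's proof diverge fundamentally, and your version has a gap that the paper's observation sidesteps entirely.

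\textbf{What the paper actually does.} The whole proof is: apply Lemma \ref{lem:F2:equiv} and Lemma \ref{lem:FE} to write $c_i(f) = Z_{\rho_{i+1}}(\mathcal{F}_2^{-1}(f), \dim V_i/2 - 1)$ (your arithmetic slip: $1-s = \dim V_i/2 - 1$, not $\dim V_i/2$). At this specific value of the argument, the exponent in the definition
$$
Z_{\rho_{i+1}}(\phi, s') = \int_{N(\A_F)\backslash\SL_2(\A_F)} e^{H_B(g)(1 - \dim V_i/2 + s')}\, \rho_{i+1}(g)\phi(0_{V_i},0,1)\, d\dot g
$$
is $1 - \dim V_i/2 + (\dim V_i/2 - 1) = 0$, i.e.\ the integrand has no modular weight: the functional is simply $\phi \mapsto \int_{N\backslash\SL_2}\rho_{i+1}(g)\phi(0,0,1)\,d\dot g$. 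The quotient measure $d\dot g$ on $N\backslash\SL_2$ is right $\SL_2(\A_F)$-invariant, so invariance under $\rho_{i+1}(\SL_2(\A_F))$ is immediate — for \emph{all} group elements at once, not generator by generator. The hypothesis $\dim V_i \notin \{2,4\}$ is used both so that $c_i$ is the honest value of $Z_{r_i}$ (no derivative term) and so that $\dim V_i/2 - 1 > 1$, placing the $Z_{\rho_{i+1}}$ integral in its region of absolute convergence where the formal manipulation is legitimate.

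\textbf{Where your version goes wrong.} (1) You reduce to the generators $N$, $T_2$, $w$, but the $N$- and $T_2$-invariance you describe as "formal" from $K$-averaging is not: $Z_{r_i}(r_i(g)f,s) \neq Z_{r_i}(f,s)$ at a generic $s$, precisely because $H_B(\cdot\, g^{-1}) \neq H_B(\cdot)$; the $K$-average and the quotient measure alone don't erase the modular factor. The equality \emph{only} holds because the factor vanishes at the specific point, which is the observation your plan never makes. (2) For $w$ you invoke a gamma-factor / self-duality mechanism for the Tate integral. Self-duality of the Tate integral under Fourier transform occurs at $s = 1/2$ for $\chi$ trivial, not at $\dim V_i/2 - 1$, so that is the wrong mechanism here. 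What makes $w$-invariance "clear" is again the vanishing exponent plus right-invariance of $d\dot g$ — there is no gamma factor to track. (3) Step (i) of your "clean organization" (descending $c_i$ to the $\rho_{i+1}(\SL_2)$-coinvariants) is circular: such a descent is exactly what $\SL_2$-invariance means, so it cannot be taken as a starting point.

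In short: you correctly identified that Lemmas \ref{lem:F2:equiv} and \ref{lem:FE} are the relevant ingredients and that the exclusion $\dim V_i \in \{2,4\}$ avoids the pole, but you missed the single observation that carries the proof — the exponent in $Z_{\rho_{i+1}}$ is identically zero at the transported point — and tried to substitute a gamma-factor computation that does not apply.
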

 
\begin{proof}
By Lemma \ref{lem:F2:equiv} and Lemma \ref{lem:FE} it suffices to show that 
\begin{align*}
\mathcal{S}(V_{i+1}(\A_F)) &\lto \CC\\
f &\longmapsto Z_{\rho_{i+1}}(f,\tfrac{\dim V_i}{2}-1)
\end{align*}
is invariant under the action of $\rho_{i+1}(\SL_2(\A_F)).$  But this is clear.  
\end{proof}

\begin{lem} \label{lem:invariance} 
Assume $\dim V_i=4$  and that $f \in \mathcal{S}(V_{i+1}(\A_F)\oplus \A_F^2).$
The difference 
\begin{align*}
&c_i(d_{i+1}(f))+c_{i-1}(d_{i+1,i-1}(f))\\
&-\left(c_i(d_{i+1}(\mathcal{F}_{X_{i+1}}(f)))+c_{i-1}(d_{i+1,i-1}(\mathcal{F}_{X_{i+1}}(f)))\right)
\end{align*}
is invariant under $f \mapsto r_{i+1}(h)f$ for $h \in \SL_2(\A_F)$.
\end{lem}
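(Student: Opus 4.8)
The plan is to derive the lemma from the summation formula of Theorem~\ref{thm:main}, applied at level $\ell = i+1$, combined with the invariance of the ``generic'' functionals $c_j$ (those with $\dim V_j \notin \{2,4\}$) supplied by Lemma~\ref{lem:EZ:case} and with the evident $r_{i+1}(\SL_2(\A_F))$-invariance of the $I$-terms and of evaluation at $(0_{V_0},0,0)$.

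I would begin with a bookkeeping step that is really the only place an argument, as opposed to unwinding definitions, is needed. Since $\dim V_i = 4 = \dim V_0 + 2i$, one has $(\dim V_0,i) \in \{(0,2),(2,1),(4,0)\}$. If $i = 0$ then $c_i$ and $c_{i-1}$ vanish by the convention $c_j = 0$ for $j \le 0$, so the functional in question is identically zero and there is nothing to prove; assume therefore $i \in \{1,2\}$. The point is that among the indices $j \in \{1,\dots,i+1\}$ occurring in the summation formula at level $\ell = i+1$, the only ones with $\dim V_j \in \{2,4\}$ are $j = i$ (with $\dim V_i = 4$) and, precisely when $\dim V_0 = 0$ (hence $i=2$), also $j = i-1 = 1$ (with $\dim V_1 = 2$): the remaining index $j = i+1$ has $\dim V_{i+1} = 6$, no index $j \le i-2$ occurs since $i \le 2$, and when $\dim V_0 = 2$ one has $i-1 = 0$ so $c_{i-1} = 0$. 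Thus the two functionals $c_{i-1}(d_{i+1,i-1}(\cdot))$ and $c_i(d_{i+1}(\cdot))$ grouped in the statement are exactly the ones to which Lemma~\ref{lem:EZ:case} fails to apply.

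Next I would write the left-hand side of Theorem~\ref{thm:main} at level $\ell = i+1$ as $A(f) = B(f) + c_i(d_{i+1}(f)) + c_{i-1}(d_{i+1,i-1}(f))$, where $B(f)$ collects the terms $\sum_{\xi \in X_j^\circ(F)} I(d_{i+1,j}(f))(\xi)$ for $1 \le j \le i+1$, the term $\kappa\, d_{i+1,0}(f)(0_{V_0},0,0)$, and the terms $c_j(d_{i+1,j}(f))$ for $j \in \{1,\dots,i+1\}\setminus\{i-1,i\}$. The key claim is that $B$ is invariant under $f \mapsto r_{i+1}(h)f$ for $h \in \SL_2(\A_F)$: the $I$-terms are invariant because each $d_{i+1,j}$ is $\SL_2(\A_F)$-intertwining and $I$ factors through the coinvariants $\mathcal{S}(X_j(\A_F))$; the $\kappa$-term is invariant because $\kappa \ne 0$ forces $V_0 = \{0\}$, where $r_0$ acts through $L^\vee$ alone and hence fixes evaluation at $(0_{V_0},0,0)$; and each remaining $c_j$ is invariant by Lemma~\ref{lem:EZ:case} (applicable by the bookkeeping step) or vanishes. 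Since $\mathcal{F}_{X_{i+1}}$ commutes with $r_{i+1}(\SL_2(\A_F))$, the composite $B \circ \mathcal{F}_{X_{i+1}}$ is invariant as well.

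To conclude, Theorem~\ref{thm:main} at level $\ell = i+1$ asserts $A(f) = A(\mathcal{F}_{X_{i+1}}(f))$ for all $f \in \mathcal{S}(V_{i+1}(\A_F) \oplus \A_F^2)$, so the functional in the lemma, namely $\big(A(f)-B(f)\big) - \big(A(\mathcal{F}_{X_{i+1}}(f)) - B(\mathcal{F}_{X_{i+1}}(f))\big)$, is equal to $B(\mathcal{F}_{X_{i+1}}(f)) - B(f)$, which is $r_{i+1}(\SL_2(\A_F))$-invariant by the previous paragraph. The main obstacle, then, is not any analytic difficulty but the combinatorial observation that pins down $i$ and $\dim V_0$ tightly enough to guarantee that $c_{i-1}$ and $c_i$ are the only potentially non-invariant boundary functionals at level $i+1$; once that is in hand, the relation $A = A \circ \mathcal{F}_{X_{i+1}}$ trades the grouped functional for $B\circ\mathcal{F}_{X_{i+1}} - B$, which is manifestly invariant.
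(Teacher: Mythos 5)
Your proposal is correct and takes essentially the same route as the paper: both apply Theorem~\ref{thm:main} at level $\ell=i+1$ to express the difference in the lemma as (minus) the sum of the remaining boundary terms, which are then seen to be invariant—the $I$-terms because $I$ factors through the coinvariants, the $\kappa$-term because evaluation at $(0_{V_0},0,0)$ is invariant when $V_0=\{0\}$, and $c_{i+1}$ by Lemma~\ref{lem:EZ:case} since $\dim V_{i+1}=6$. Your bookkeeping step pinning down $(\dim V_0,i)\in\{(0,2),(2,1),(4,0)\}$ and disposing of $i=0$ is slightly more explicit than what the paper writes, but it is exactly the verification implicitly needed for the paper's one-line appeal to Lemma~\ref{lem:EZ:case}.
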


\begin{proof}  
By Theorem \ref{thm:main} the quantity in the statement of the lemma is equal to 
\begin{align*}
&-\sum_{j=1}^{i+1}\sum_{\xi \in X_{j}^\circ(F)}I(d_{i+1,j}(f))(\xi)+\sum_{j=1}^{i+1}\sum_{\xi \in X_{j}^\circ(F)}I(d_{i+1,j}(\mathcal{F}_{X_{i+1}}(f)))(\xi)\\&-\kappa d_{i+1,0}(f)(0_{V_0},0,0)  +\kappa d_{i+1,0}(\mathcal{F}_{X_{i+1}}(f))(0_{V_0},0,0) \\& 
    -c_{i+1}(f)+c_{i+1}(\mathcal{F}_{X_{i+1}}(f)).
\end{align*}  
Using Lemma \ref{lem:EZ:case} the last two terms are invariant.  In view of the equivariance property of Lemma \ref{lem:F2:equiv} and the definition \eqref{di0} of $d_i$ the middle two terms are invariant.  Finally, the terms involving $I$ are invariant due to the equivariance properties just mentioned and the fact that $I$ is defined via an integral over $N(\A_F) \backslash \SL_2(\A_F).$
\end{proof}

\begin{proof}[Proof of Theorem \ref{thm:inv}]
If $ \dim V_i \not \in \{2,4\}$ then the theorem follows from Lemma \ref{lem:EZ:case}.  Thus we assume that $\dim V_i=4.$ We must show that $f \mapsto c_i(f)+c_{i-1}(d_i(f))$ is $r_i(\mathrm{SL}_2(\A_F))$-invariant.  
Let $f \in \mathcal{S}(V_i(\A_F) \oplus \A_F^2).$  Pick $\Phi \in \mathcal{S}(\A_F^2)$ such that $\Phi(0,0)=1$ and $\mathcal{F}_\wedge(\Phi)(0,0)=0.$  Then $\mathcal{F}_2^{-1}(f) \otimes \Phi \in \mathcal{S}(V_{i+1}(\A_F) \oplus \A_F^2)$, and by Lemma \ref{lem:F2:equiv} and Lemma \ref{lem:invariance} 
\begin{align*}
&c_i(d_{i+1}(\mathcal{F}_2^{-1}(f) \otimes \Phi))+c_{i-1}(d_{i+1,i-1}(\mathcal{F}_2^{-1}(f) \otimes \Phi))\\&-\left(c_i(d_{i+1}(\mathcal{F}_2^{-1}(f) \otimes \mathcal{F}_\wedge(\Phi)))+c_{i-1}(d_{i+1,i-1}(\mathcal{F}_2^{-1}(f) \otimes \mathcal{F}_\wedge (\Phi)))\right)
\end{align*}
is invariant under $f \otimes \Phi \mapsto r_{i}(h) \otimes L^\vee(h)(f \otimes \Phi)$ for $h\in \SL_2(\A_F)$.  But the above is 
\begin{align*}
&c_i(f)\Phi(0,0)+c_{i-1}(d_{i,i-1}(f))\Phi(0,0)\\&-c_i(f) \mathcal{F}_\wedge(\Phi)(0,0)-c_{i-1}(d_{i,i-1}(f))\mathcal{F}_\wedge(\Phi)(0,0)\\
&=c_i(f)+c_{i-1}(d_{i,i-1}(f))
\end{align*}
and we deduce the theorem.
\end{proof}

Motivated by Theorem \ref{thm:inv},
for $f \in \mathcal{S}(X_{\ell}(\A_F))$ we let
\begin{align} \label{ci:tilde}
\widetilde{c}_i(f):=\begin{cases} c_i(f) &\textrm{ if }\dim V_i >4\\
c_i(f)+c_{i-1}(d_i(f)) &\textrm{ if }\dim V_i=4\\
c_i(f)=0 &\textrm{ if } \dim V_i=2.
    \end{cases}
\end{align}
The maps $d_{i,i'}$ 
descend to $d_{i,i'}:\mathcal{S}(X_{i}(\A_F)) \to \mathcal{S}(X_{i'}(\A_F)).$
Moreover the Fourier transform 
$\mathcal{F}_{X_{\ell}}$ 
descends to 
\begin{align}
    \mathcal{F}_{X_{\ell}}: \mathcal{S}(X_{\ell}(\A_F)) \lto \mathcal{S}(X_{\ell}(\A_F)).
\end{align}
Combining theorems \ref{thm:main} and \ref{thm:inv} we obtain the following:
\begin{cor} \label{cor:main2} For $f \in \mathcal{S}(X_\ell(\A_F))$ one has that
\begin{align*}
&\sum_{i=1}^\ell\left(\widetilde{c}_i(d_{\ell,i}(f))+\sum_{\xi \in X^{\circ}_i(F)}I(d_{\ell,i}(f))(\xi)\right)+ \kappa d_{\ell,0}(f)(0_{V_0},0,0)\\&
=\sum_{i=1}^\ell \left(\widetilde{c}_i(d_{\ell,i}(\mathcal{F}_{X_{\ell}} (f)))+\sum_{\xi \in X^{\circ}_i(F)}I(d_{\ell,i}(\mathcal{F}_{X_{\ell}}(f)))(\xi)\right)+\kappa d_{\ell,0}(\mathcal{F}_{X_{\ell}}(f))(0_{V_0},0,0).
\end{align*}
\qed
\end{cor}

\section{Canonicity} \label{sec:canon}
As in \S \ref{ssec:canon}, a \textbf{framed flag of quadratic spaces extending  $V_0$} (or more briefly a framed flag) is a collection of vector spaces
\begin{align} \label{framed}
V_0<V_1'<\dots<V_{\ell}'
\end{align}
equipped with a nondegenerate quadratic form $Q'_\ell$ on $V_{\ell}'$ such that $Q_{\ell}'|_{V_0}=Q_0$
together with $v_{i}',w'_{i} \in V_{i}'(F) \cap V_{i-1}'^{\perp}(F) $ satisfying $Q'_{\ell}(v_i')=Q_{\ell}'(w_{i}')=0$ and $Q'_{\ell}(v_{i}'+w'_{i})=1.$  We call the ordered set $\{v_{i}',w_{i}'\}_{i=1}^{\ell}$ the \textbf{based polarization} of the flag.
By a \textbf{morphism} of framed flags
\begin{align} \label{phi}
\phi:V_0<V_1'<\dots<V_{\ell}' \lto V_0<V_1''<\dots<V_{\ell}''
\end{align}
 we mean a morphism of (additive) algebraic groups $\phi:V_\ell' \to V_\ell''$ that is the identity on $V_0$ and satisfies 
 $$
 (\phi(v_i'),\phi(w_i'))=
 (v_{i}'',w_i'')
 $$ 
 for $1 \leq i \leq \ell.$   Denote by $Q_{\ell}''$ the quadratic form on $V_{\ell}''.$ If $\phi:V_{\ell}' \to V_{\ell}''$ is a morphism of framed flags then for every $F$-algebra $R$ we have $Q''_{\ell}(\phi(v'))=Q'_{\ell}(v')$ for all $v' \in V'_{\ell}(R).$  
 
 By basic linear algebra, given any two framed flags $V_0<V_1'<\dots<V_\ell'$ and $V_0<V_1''<\dots<V_{\ell}''$ there is a unique morphism as in \eqref{phi}.  In other words:

\begin{lem}
Any object of the category of framed flags extending $V_0$ is initial. \qed
\end{lem}
To fix ideas, we define the universal framed flag $V_0 <V_1^{\mathrm{u}}<\dots<V_{\ell}^{\mathrm{u}}$ inductively by setting $(V_0^{\mathrm{u}},Q_0^{\mathrm{u}}):=(V_0,Q_0)$ and
$$
V_i^{\mathrm{u}}(R):=\{av_i^{\mathrm{u}}+bw_i^{\mathrm{u}}:a,b \in R\} \oplus V_{i-1}^{\mathrm{u}}(R).
$$
Here we equip $V_i^{\mathrm{u}}$ with the unique nondegenerate quadratic form $Q_i^{\mathrm{u}}$ such that $Q_i^{\mathrm{u}}|_{V_{i-1}^{\mathrm{u}}}=Q_{i-1}^{\mathrm{u}},$ $Q_i^{\mathrm{u}}(v_i^{\mathrm{u}})=Q_i^{\mathrm{u}}(w_i^{\mathrm{u}})=0$ and $Q_i^{\mathrm{u}}(v_i^{\mathrm{u}}+w_i^{\mathrm{u}})=1.$
Thus we have declared that $\{v_i^{\mathrm{u}},w_i^{\mathrm{u}}\}$ is a basis of the orthogonal complement of $V_{i-1}^{\mathrm{u}}$ in $V_i^{\mathrm{u}}$ with respect to $Q_i.$ 

Now assume that we are given a framed flag $V_0<V_1'<\dots<V_{\ell+1}'.$ Let $Q_{i}':=Q_\ell'|_{V_i}.$ We define
\begin{align*}
I:\mathcal{S}(V_{i}'(\A_F) \oplus \A_F^2) &\lto C^\infty(X'^\circ_i(\A_F))\\
c_i,\widetilde{c}_i:\mathcal{S}(V_{i}'(\A_F) \oplus \A_F^2) &\lto \CC
\end{align*}
as in \eqref{I}, \eqref{ci:def}, and \eqref{ci:tilde} using the operator  $r_i:=\rho_{Q_i',\psi} \oplus L^\vee.$  

Following \eqref{F2}, we define an $\SL_2(\A_F)$-equivariant map
\begin{align*}
\mathcal{F}_2:\mathcal{S}(V'_{i+1}(\A_F)) &\lto \mathcal{S}(V_i'(\A_F) \oplus \A_F^2)\\
f &\longmapsto (\xi,(a,b)) \mapsto \int_{\A_F} f(\xi+av_i'+xw_i')\psi(bx)dx
\end{align*}
and then
\begin{align*}
d_i:\mathcal{S}(V_{i}'(\A_F) \oplus \A_F^2) \lto \mathcal{S}(V_{i-1}'(\A_F) \oplus \A_F^2).
\end{align*}
is defined as in  \eqref{di0}.  We define $d_{i,i'}=d_{i,i-1} \circ \dots \circ d_{i'+1,i'}$ as before.  Let
\begin{align}
    \mathcal{S}(X'_{i}(\A_F)):=\mathcal{S}(V_{i}'(\A_F) \oplus \A_F^2)_{r_i(\SL_2(\A_F))}.
\end{align}
We point out that $I$ descends to $\mathcal{S}(X_{i}(\A_F)),$ $d_{i,i'}$ descends to a map $d_{i,i'}:\mathcal{S}(X'_{i}(\A_F)) \to \mathcal{S}(X'_{i'}(\A_F)),$ and $\widetilde{c}_i$ descends to a map $\widetilde{c}_i:\mathcal{S}(X'_i(\A_F)) \to \CC.$  

With these conventions, all of the results of this paper continue to hold if we replace $(V_i,X_i)$ by $(V_i',X_i')$ everywhere.

Now let 
\begin{align}
    \phi^{\mathrm{u}}:V_0<V_1^{\mathrm{u}}<\dots<V_{\ell}^{\mathrm{u}} \lto V_0<V_1'<\dots<V_\ell'
\end{align}
be the unique morphism of framed flags from the universal frame flag. 
We thus obtain a pullback morphisms $\phi^{\mathrm{u}*}:\mathcal{S}(V_{i}'(\A_F)) \to \mathcal{S}(V_{i}^{\mathrm{u}}(\A_F))$ and
\begin{align*}
    \phi^{\mathrm{u}*}:\mathcal{S}(V_{i}'(\A_F) \oplus \A_F^2) &\lto \mathcal{S}(V_{i}^{\mathrm{u}}(\A_F) \oplus \A_F^2)\\
    f &\longmapsto \left((\xi,(a,b)) \mapsto f(\phi^{\mathrm{u}}(\xi),(a,b))\right).
\end{align*}
We then define 
\begin{align*}
d_{\ell,i}^{\mathrm{u}}:=d_{\ell,i} \circ (\phi^{\mathrm{u}})^*: \mathcal{S}(V_{\ell}'(\A_F) \oplus \A_F^2)& \lto \mathcal{S}(V_{i}^{\mathrm{u}}(\A_F) \oplus \A_F^2),  & 0 \leq i \leq \ell-1\\c_i^{\mathrm{u}}:=c_i \circ d_{\ell,i}^{\mathrm{u}}, \widetilde{c}_i^{\mathrm{u}}:=\widetilde{c}_i \circ d_{\ell,i}^{\mathrm{u}}:\mathcal{S}(V_i^{\mathrm{u}}(\A_F) \oplus \A_F^2) &\lto \CC,  & 1 \leq i \leq \ell-1
\end{align*}
The map $d_{\ell,i}^{\mathrm{u}}$ descends to a map $d_{\ell,i}^{\mathrm{u}}:\mathcal{S}(X_{\ell}'(\A_F)) \to \mathcal{S}(X_{i}^{\mathrm{u}}(\A_F)),$ the linear functional $c_i^{\mathrm{u}}$ descends to a linear functional on $\mathcal{S}(X_i^{\mathrm{u}}(\A_F)),$ and the operators $I$ descend to operators $I:\mathcal{S}(X_i^{\mathrm{u}}(\A_F)) \to C^\infty(X_i^{\mathrm{u} \circ}(\A_F)).$

\begin{proof}[Proof of Corollary \ref{cor:main}]
As remarked before, Theorem \ref{thm:main} remains valid for $V_{\ell}^{\mathrm{u}}.$  Thus for any
$f \in \mathcal{S}(V_{\ell}'(\A_F) \oplus \A_F^2)$ we have 
\begin{align*}
&\sum_{i=1}^\ell\left(c_i(d_{\ell,i}(\phi^{\mathrm{u}*}(f)))+\sum_{\xi \in X^{\mathrm{u}\circ}_i(F)}I(d_{\ell,i}(\phi^{\mathrm{u}*}(f)))(\xi)\right)\\&+ \kappa d_{\ell,0}(\phi^{\mathrm{u}*}(f))(0_{V_0},0,0)\\&
=\sum_{i=1}^\ell \left(c_i(d_{\ell,i}(\mathcal{F}_{X_{\ell}^{\mathrm{u}}} (\phi^{\mathrm{u}*}(f))))+\sum_{\xi \in X^{\mathrm{u}\circ}_i(F)}I(d_{\ell,i}(\mathcal{F}_{X_{\ell}^{\mathrm{u}}}(\phi^{\mathrm{u}*}(f))))(\xi)\right)\\
&+\kappa d_{\ell,0}(\mathcal{F}_{X_{\ell}^{\mathrm{u}}}(\phi^{\mathrm{u}*}(f)))(0_{V_0},0,0).
\end{align*}
The definition of the Weil representation was recalled in \S \ref{ssec:WR}.  Using it, one checks that 
\begin{align}
    \rho_{Q^{\mathrm{u}}_i,\psi} \circ \phi^{\mathrm{u}*}=\phi^{\mathrm{u}*} \circ \rho_{Q_i',\psi}
\end{align}
It follows that 
$$
c_\ell(\phi^{\mathrm{u}*}(f))=c_{\ell}(f) \textrm{ and } I(\phi^{\mathrm{u}*}(f))(\xi)=I(f)(\phi^{\mathrm{u}}(\xi)).
$$
Moreover, using the definition of $\mathcal{F}_{X_i}$  from \eqref{FXi} we have $\mathcal{F}_{X_{\ell}^{\mathrm{u}}} \circ \phi^{\mathrm{u}*}=\phi^{\mathrm{u}*} \circ \mathcal{F}_{X_{\ell}}.$

The corollary follows.
\end{proof}

Combining corollary \ref{cor:main} and corollary \ref{cor:main2} we arrive at yet another corollary:

\begin{cor} \label{cor:inv}
    For  $f \in \mathcal{S}(X_\ell'(\A_F))$ one has that
\begin{align*}
& c_\ell(f)+\sum_{\xi \in X'^{\circ}_{\ell}(F)}I(f)(\xi)\\
&+\sum_{i=1}^{\ell-1}\left(\widetilde{c}_i^{\mathrm{u}}(d_{\ell,i}^{\mathrm{u}}(f))+\sum_{\xi \in X^{\mathrm{u}\circ}_i(F)}I(d_{\ell,i}^{\mathrm{u}}(f))(\xi)\right)+ \kappa d_{\ell,0}^{\mathrm{u}}(f)(0_{V_0},0,0)\\&
=c_\ell(\mathcal{F}_{X_\ell'}(f))+\sum_{\xi \in X'^{\circ}_{\ell}(F)}I(\mathcal{F}_{X_\ell'}(f))(\xi)\\
&+\sum_{i=1}^{\ell-1} \left(\widetilde{c}_i^{\mathrm{u}}(d_{\ell,i}^{\mathrm{u}}(\mathcal{F}_{X_{\ell}'} (f)))+\sum_{\xi \in X^{\mathrm{u}\circ}_i(F)}I(d_{\ell,i}^{\mathrm{u}}(\mathcal{F}_{X_{\ell}'}(f)))(\xi)\right)+\kappa d_{\ell,0}^{\mathrm{u}}(\mathcal{F}_{X_{\ell}'}(f))(0_{V_0},0,0).
\end{align*}
\qed
\end{cor}

\section{Application to summation over points on a quadric} \label{sec:circle:method}

Just as the Poisson summation formula gives a canonical method for estimating the number of points in a vector space of a given size, the summation formula of Theorem \ref{thm:main} gives a canonical method for estimating the number of points in a quadric of a given size.  Given the ubiquity of the Poisson summation formula in analytic number theory (and more generally in analysis) we can imagine many applications of 
Theorem \ref{thm:main}.  

Of course the question of analyzing the number of points on a quadric has a long history, with theta functions and the circle method playing a key role.  We point out in particular the author's work in \cite{GetzQuad} based on earlier work of Heath-Brown \cite{HBnewcircle} and Duke, Friedlander and Iwaniec \cite{DFI}.  To make the relationship between the circle method and Theorem \ref{thm:main} more transparent we explain Theorem \ref{thm:main} in a special case.  Let $F=\QQ$ and let $Q_\ell$ be a quadratic form with matrix $J_\ell$ defined as in \eqref{Qi}.  We assume that $J_\ell \in \GL_{V_\ell}(\ZZ)$ and $\det J_\ell=(-1)^{\ell}$.  Assume moreover that $\ell>0$ (which is to say that $V_\ell$ is isotropic) and that $\dim V_\ell > 4$.  The case $\dim V_\ell=4$ can also be treated, but the formula is slightly more complicated (see \cite{GetzQuad,HBnewcircle}). We choose 
$$
f=f_\infty \one_{V_{\ell+1}(\widehat{\ZZ})} \in \mathcal{S}(V_{\ell}(\A_\QQ) \oplus \A_\QQ^2)
$$  
and assume that 
$$
d_{\ell}(f_\infty)=c_\ell(f_\infty)=0.
$$
This is easy enough to arrange.
  Then for any $B \in \RR_{>0} \leq \QQ_{\infty}^{\times}$
\begin{align} \label{sum}
&\sum_{\xi \in X_{\ell}^\circ(\QQ)}I(f)\Big(\frac{\xi}{B}\Big)
=\sum_{n=0}^\infty\sum_{ \substack{\xi \in nV_\ell(\ZZ)-\{0\}\\
Q_\ell(\xi)=0
}} n^{\dim V_\ell/2-2}I(f_\infty)\Big(\frac{\xi}{B} \Big).
\end{align}
Here we have used Lemma \ref{lem:unr:conv}.
Thus one side of our summation formula is a weighted version of a sum familiar from analytic number theory.  It is a smoothed count of the number of integral zeros of $Q_\ell$ of size at most $B.$  

By Corollary \ref{cor:scaling} the other side is  
\begin{align*}
&B^{\dim V_\ell-2}c_\ell(\mathcal{F}_{X_\ell}(f))+B^{\dim V_\ell-2}
\sum_{n=0}^\infty\sum_{ \substack{\xi \in nV_\ell(\ZZ)-\{0\}\\
Q_\ell(\xi)=0
}} n^{\dim V_\ell/2-2}I(\mathcal{F}_{X_\ell}(f_\infty))\left(B\xi \right)\\
&
+B^{\dim V_\ell/2}\sum_{i=1}^{\ell-1}  \left(c_i(d_{\ell,i}(\mathcal{F}_{X_\ell}(f)))+
\sum_{n=0}^\infty\sum_{ \substack{\xi \in nV_i(\ZZ)-\{0\}\\
Q_i(\xi)=0
}} n^{\dim V_i/2-2}I(d_{\ell,i}(\mathcal{F}_{X_\ell}(f_\infty)))\left(\xi \right)\right)\\
&+B^{\dim V_\ell/2}\kappa d_{\ell,0}(\mathcal{F}_{X_\ell}(f))(0_{V_0},0,0).
\end{align*}

Thus Theorem \ref{thm:main} gives a complete asymptotic expansion of \eqref{sum} as a function of $B$ in terms of sums over the zero loci of all quadratic forms of rank less than or equal to $\dim V_\ell$ in the Witt class of $Q_\ell.$
 This goes far beyond what is obtained in the usual circle method, which usually only produces a description of the main term $c_{\ell}(\mathcal{F}_{X_\ell}(F))$ (but see \cite{GetzQuad,SchindlerHigherOrderExpansions,Tran,VaughanWooleyhoexp}).
The term $c_\ell(\mathcal{F}_{X_\ell}(f))$ is essentially the familiar singular series.
  The flexibility of choosing other test functions in $\mathcal{S}(X_\ell(\A_\QQ))$ allows one to impose congruence conditions on the sum.   It is important that one is allowed to choose arbitrary test functions at infinity for classical applications of this formula.  More specifically, if one imposed the conditions of \cite{BK:normalized}, for example, then the term $c_\ell(\mathcal{F}_{X_\ell}(f))$ would be zero, which would render the formula useless for the classical application of counting the number of points of size at most $B$ on $X_\ell(\ZZ)$.

% ----------------------------------------------------------------

\bibliography{refs}{}
\bibliographystyle{alpha}

\end{document}